\documentclass[a4paper,twoside,10pt]{article}

\usepackage[a4paper,left=3cm,right=3cm, top=3cm, bottom=3cm]{geometry}
\usepackage[utf8]{inputenc}
\usepackage{graphicx,epstopdf}
\epstopdfsetup{
    suffix=,
}
\usepackage{amsmath}
\usepackage{amsthm}
\usepackage{amssymb}
\usepackage{color}
\usepackage[font=footnotesize,labelfont=bf]{caption}
\usepackage[author={Lorenzo}]{pdfcomment}

\usepackage{algorithm}
\restylefloat{table}
\theoremstyle{plain}
\newtheorem{theorem}{Theorem}[section]
\newtheorem{corollary}[theorem]{Corollary}
\newtheorem{lemma}[theorem]{Lemma}
\newtheorem{proposition}[theorem]{Proposition}
\theoremstyle{definition}

\theoremstyle{remark}
\newtheorem{remark}{Remark}

% MACROS:
% Colors

\definecolor{darkgreen}{rgb}{0.0, 0.55, 0.0}

% MINOR MACROS
%%%%%%%
\let\div\relax
\DeclareMathOperator{\div}{div}
\DeclareMathOperator{\dev}{\bf dev}
\DeclareMathOperator{\tr}{tr}
\DeclareMathOperator{\divbf}{\bf div}
\DeclareMathOperator{\gradbf}{\bf \nabla}
\newcommand{\h}{h}
\newcommand{\sigmabold}{\underline{\boldsymbol \sigma}}
\newcommand{\taubold}{\underline{\boldsymbol \tau}}
\newcommand{\sigmaboldh}{\underline{\boldsymbol \sigma}_\h}
\newcommand{\sigmaboldI}{\underline{\boldsymbol \sigma}_I}
\newcommand{\tauboldh}{\underline{\boldsymbol \tau}_\h}

\newcommand{\wbfstar}{\mathbf w^*}
\newcommand{\zbfstar}{\mathbf z^*}
\newcommand{\E}{K}
\newcommand{\hE}{\h_\E}
\newcommand{\F}{F}
\newcommand{\hF}{\h_\F}
\newcommand{\e}{e}
\newcommand{\he}{\h_\e}
\newcommand{\FcalE}{\mathcal F^\E}
\newcommand{\Nbb}{\mathbb N}
\let\P\relax
\DeclareMathOperator{\P}{P}
\newcommand{\Pbf}{\mathbf P}
\newcommand{\Pbb}{\mathbb P}
\newcommand{\Rbb}{\mathbb R}
\newcommand{\RM}{\mathbf{RM}}
\newcommand{\RMperp}{\RM^\perp}
\newcommand{\qbf}{\mathbf q}
\newcommand{\qbftilde}{\mathbf q}
\newcommand{\nbf}{\mathbf n}
\newcommand{\nbfE}{\nbf^\E}
\newcommand{\Norm}[1]{{\left\|{#1} \right\|}}
\newcommand{\SemiNorm}[1]{{\left|{#1} \right|}}
\newcommand{\SE}{S^\E}
\newcommand{\StildeE}{\widetilde S^\E}
\newcommand{\Sigmaboldh}{\underline{\boldsymbol \Sigma}_\h} 
\newcommand{\SigmaboldhE}{\Sigmaboldh(\E)}

\newcommand{\zerobf}{\mathbf 0}

\newcommand{\PiRM}{\boldsymbol \Pi_{\RM}}
\newcommand{\PiRMperp}{\boldsymbol \Pi_{\RM}^{\perp}}

\newcommand{\rbf}{\mathbf r}
\newcommand{\xbf}{\mathbf x}
\newcommand{\xbfE}{\xbf_\E}
\newcommand{\boldalpha}{\boldsymbol \alpha}
\newcommand{\boldbeta}{\boldsymbol \beta}
\newcommand{\boldlambda}{\boldsymbol \lambda}
\newcommand{\omegabold}{\boldsymbol \omega}
\newcommand{\p}{p}
\newcommand{\qbfp}{\qbf_\p}
\newcommand{\qbfunderp}{\underline{\qbf}_\p}
\newcommand{\qbftildep}{\qbftilde_\p}
\newcommand{\qbftildepF}{\qbftildep^\F}
\newcommand{\qbfppo}{\qbf_{\p+1}}
\newcommand{\qbfpperp}{\qbf_\p^\perp}
\newcommand{\mboldalpha}{\mathbf m_{\boldalpha}}
\newcommand{\mboldbeta}{\mathbf m_{\boldbeta}}
\newcommand{\mboldalphaF}{\mboldalpha^\F}
\newcommand{\mboldbetaF}{\mboldbeta^\F}
\newcommand{\mboldalphaperp}{\mboldalpha^\perp}
\newcommand{\mboldbetaperp}{\mboldbeta^\perp}
\DeclareMathOperator{\DoFbf}{\textbf{dof}}
\DeclareMathOperator{\DoFbfdiv}{\textbf {dof}^{\perp}}
\DeclareMathOperator{\DoFbfdivboldbeta}{\textbf {dof}^{\perp}_{\boldbeta}}
\newcommand{\Cbb}{\mathbb C}
\newcommand{\Dbb}{\mathbb D}
\newcommand{\muperp}{\mu^\perp}
\newcommand{\boldmu}{\boldsymbol \mu}
\newcommand{\taun}{\mathcal T_n}

\newcommand{\qbfpF}{\qbf_\p^\F}
\newcommand{\qbfpRM}{\qbf_p^{\text{RM}}}
\newcommand{\PizE}{\boldsymbol \Pi_\p^{0,\E}}
\newcommand{\TpE}{\mathbb T_\p(\E)}
\newcommand{\PiTpE}{\underline{\boldsymbol\Pi}_\p^{T}}
\newcommand{\qbbpT}{\underline{\mathbf q}_\p^{\mathbf T}}
\newcommand{\aE}{a^\E}
\newcommand{\ahE}{a_h^\E}
\newcommand{\Sigmabold}{\underline{\boldsymbol\Sigma}}
\newcommand{\Sigmaboldtilde}{\widetilde{\Sigmabold}}
\newcommand{\SigmaboldtildeE}{\Sigmaboldtilde(\E)}
\newcommand{\wbf}{\mathbf w}
\newcommand{\zbf}{\mathbf z}
\newcommand{\rbfppo}{\underline{\mathbf r}_{\p+1}}
\newcommand{\fbf}{\mathbf f}
\newcommand{\ubf}{\mathbf u}
\newcommand{\vbf}{\mathbf v}
\newcommand{\Vbf}{\mathbf V}
\newcommand{\Ibb}{\mathbb I}
\newcommand{\PizF}{\boldsymbol\Pi^{0,\F}_{\p}}

\newcommand{\nablaS}{\nabla_S}
\newcommand{\nablaSS}{\nabla_{SS}}
\newcommand{\betastarz}{\beta^*_0}
\newcommand{\Scal}{\mathcal S}
\newcommand{\HScal}{\mathbb H_{\Scal}}

\newcommand{\EcalF}{\mathcal E^\F}
\newcommand{\sigmaboldN}{\boldsymbol\sigma_N}
\newcommand{\ufrak}{\mathbf{\mathfrak u}}
\newcommand{\pfrak}{\mathfrak p}
\newcommand{\vfrak}{\mathbf{\mathfrak v}}
\newcommand{\qfrak}{\mathfrak q}
\newcommand{\Vfrak}{\mathbf{\mathfrak V}}
\newcommand{\Qfrak}{\mathfrak Q}
\newcommand{\Cafrak}{C_{\mathfrak a}}

\newcommand{\hOmega}{\h_\Omega}
\newcommand{\rhoOmega}{\rho_\Omega}
\newcommand{\cbf}{\mathbf c}
\newcommand{\sigmaboldhat}{\widehat{\sigmabold}}
\newcommand{\Htildebfo}{\widetilde{\Hbf}^1}

\newcommand{\Abf}{\mathbf A}
\newcommand{\Bbf}{\mathbf B}

\newcommand{\varphibold}{\boldsymbol \varphi}

\newcommand{\Hbf}{\mathbf H}
\newcommand{\Hbb}{\mathbb H}
\newcommand{\Lbf}{\mathbf L}
\newcommand{\Lbb}{\mathbb L}

%%%%%%%
%% size Mesh figures
\newcommand{\sizeMesh}{0.15}
%%%%%%%

\setcounter{secnumdepth}{4}
\setcounter{tocdepth}{4}

\title{Stability and interpolation estimates of Hellinger--Reissner virtual element spaces}
\author{\footnotesize Michele Botti\thanks{MOX, Department of Mathematics, Politecnico di Milano, 20133 Milano, Italy (michele.botti@polimi.it, michele.visinoni@polimi.it)},
Lorenzo Mascotto\thanks{Department of Mathematics and Applications, University of Milano--Bicocca, 20125 Milan, Italy;
Faculty of Mathematics, University of Vienna, 1090 Vienna, Austria;
IMATI-CNR, 27100 Pavia, Italy
(lorenzo.mascotto@unimib.it)},\;
Giuseppe Vacca\thanks{Department of Mathematics, University of Bari, 70125 Bari (giuseppe.vacca@uniba.it)},\;
Michele Visinoni\footnotemark[1]}
%\date{\vspace{-5ex}}
\date{}

%%%%%%%%%%%%%%%%%%%%%%%%%%%%%%%%%%%%%
\begin{document}
%%%%%%%%%%%%%%%%%%%%%%%%%%%%%%%%%%%%%
\maketitle

\begin{abstract}
\noindent
We prove stability and interpolation estimates
for Hellinger--Reissner virtual elements;
the constants appearing in such estimates only depend on
the aspect ratio of the polytope under consideration
and the degree of accuracy of the scheme.
We further investigate numerically the behaviour
of the constants appearing in the stability estimates
on sequences of badly-shaped polytopes
and for increasing degree of accuracy.

\medskip\noindent
\textbf{AMS subject classification}: 65N12; 65N30.

\medskip\noindent
\textbf{Keywords}: virtual element method; stability estimate;
interpolation estimate; Hellinger--Reissner principle.
\end{abstract}

%%%%%%%%%%%%%%%%%%%%%%%%%%%%%%%%%%%%%%%%%%%%%%%%%%%%%%%%%%%%%%%%%%%%%%%%%%
\section{Introduction} \label{section:introduction}
%%%%%%%%%%%%%%%%%%%%%%%%%%%%%%%%%%%%%%%%%%%%%%%%%%%%%%%%%%%%%%%%%%%%%%%%%%

\paragraph*{State-of-the-art.}
Virtual elements were introduced more than
a decade ago~\cite{Beirao-Brezzi-Cangiani-Manzini-Marini-Russo:2013}
as a generalization of finite elements
able to handle meshes of polytopic elements;
different types of elements have been designed for several problems,
including the linear elasticity problem
based on the Hellinger--Reissner (HR) principle.
A lowest order HR virtual element in 2D was introduced in~\cite{Artioli-DeMiranda-Lovadina-Patruno:2017}
and generalized to the arbitrary order case in~\cite{Artioli-DeMiranda-Lovadina-Patruno:2018};
the 3D version of the method is presented
in~\cite{Dassi-Lovadina-Visinoni:2020, Visinoni:2024}.
HR virtual elements consist of strongly symmetric stresses,
whose degrees of freedom are associated
with the interior of a $d$ dimensional element
and its $d-1$ dimensional facets, $d=2,3$,
and piecewise polynomial displacements.

The literature of finite elements involving strongly symmetric
stresses traces back to the end of the 60ies:
Watwood and Hartz~\cite{Watwood-Hartz:1968} constructed
lowest order spaces of strongly symmetric stresses
on triangular meshes;
therein, there are no vertex degrees of freedom
and the design of the element hinges on
a suitable split of each triangle into three smaller triangles.
That method was analyzed
by Hlav{\'a}{\v{c}}ek~\cite{Hlavecek:1979},
and Johnson and Mercier~\cite{Johnson-Mercier:1978}.
Henceforth, we shall be referring
to that kind of element as JM element.
The JM element was generalized to the 3D case by Krizek~\cite{Krizek:1982}.
A different avenue was undertaken decades later
by Arnold and Winther~\cite{Arnold-Winther:2002} (AW)
for triangular meshes;
no elemental splits are there necessary, the price to pay
being an increase of the dimension of local spaces
and the use of vertex degrees of freedom.
The AW original approach was generalized to rectangular meshes
in \cite{Arnold-Awanou:2005, Chen-Wang:2011}
and 3D elements in \cite{Adams-Cockburn:2005,Arnold-Awanou-Winther:2008}.
We also mention the work by Hu and Zhang \cite{Hu-Zhang:2015}
for polynomial order larger than $4$ on tetrahedral meshes,
which was employed by Chen and Huang
to construct a full elasticity complex in \cite{Chen-Huang:2022}.
Other elasticity complexes based on Alfeld and Worsey-Farin splits
are discussed in \cite{Christiansen-Gopalakrishnan-Guzman-Hu:2024, Gong:Gopalakrishnan-Guzman-Neilan:2023}.
More recently, JM elements in any dimension
were designed in~\cite{Gopalakrishnan-Guzman-Lee:2024}.

To the best of our understanding,
the above references can be clustered into two main categories:
AW types elements, whose design does not require
any split of the mesh elements
but in $d$ dimensions is based on degrees of freedom
on all geometrical entities with finite nonzero 
$\ell$-dimensional Hausdorff measure for all $\ell=0,\dots,d$;
JM types elements, which compared to AW elements
need fewer degrees of freedom
attached only to geometrical entities with codimension~$0$ and~$1$,
but require some elemental splits
for their design.

HR virtual element spaces retain advantages of AW and JM finite elements:
no split of the mesh elements is necessary for their design;
the degrees of freedom are only attached to the interior
and the facets of an element;
they support meshes of polytopic elements.

These upsides come at the price that discrete functions
are not known in closed form,
but only through certain polynomial projections
on the skeleton of the mesh
and the interior of each mesh element.
As such, the bilinear forms appearing in the weak formulation
of the linear elasticity problem in mixed formulation
are not computable for HR virtual element tensors.
For this reason, following the standard virtual element gospel~\cite{Beirao-Brezzi-Cangiani-Manzini-Marini-Russo:2013},
bilinear forms are discretized so as to be the sum of two terms:
one guaranteeing the polynomial consistency of the method
and requiring a projection of the HR virtual element tensors onto polynomial tensors;
the other providing stability, i.e., well-posedness
of the linear system equivalent to the method.

\paragraph*{Goals.}
In the literature on HR virtual elements
\cite{Artioli-DeMiranda-Lovadina-Patruno:2017, Artioli-DeMiranda-Lovadina-Patruno:2018, Dassi-Lovadina-Visinoni:2020, Visinoni:2024},
several stabilizations have been employed and heuristic arguments
as for its scaling with respect to local $L^2$ inner products are discussed.
In this paper, we show rigorous results along this directions:
for standard geometries, we exhibit stabilizations with correct scaling properties,
and stability constants in the sense of~\eqref{stability-bounds} below,
which only depend on the shape-regularity of the mesh
and the degree of accuracy of the scheme.
The technical tools we employ boil down to integration by parts,
polynomial inverse inequalities,
direct estimates, polynomial approximation results,
and well-posedness results of mixed formulation
with certain boundary conditions.

Another important theoretical aspect in the analysis of virtual elements
is the derivation of interpolation estimates,
with constants that are explicit with respect
to the shape of the polytopic elements.
Such estimates, yet with implicit constants,
are already available for the 2D lowest order case~\cite{Artioli-DeMiranda-Lovadina-Patruno:2017}
and they can be generalized to the general order and 3D cases
as remarked in~\cite{Artioli-DeMiranda-Lovadina-Patruno:2018, Dassi-Lovadina-Visinoni:2020, Visinoni:2024}.
Here, we provide a new proof of interpolation estimates
in HR virtual elements (with explicit constants)
based on using the stability estimates discussed above.

As we derive stability and interpolation estimates
for standard geometries (star-shaped elements
with no small facets) and for a fixed degree of accuracy,
the question arises naturally whether the proven bounds
are effectively robust with those respects.
Thus, we investigate numerically
the behaviour of the stability constants
on sequences of badly-shaped elements
and for increasing degree of accuracy.

\paragraph*{Notation.}
Given a domain~$D$ in~$\Rbb^d$, $d=2,3$,
with boundary $\partial D$,
outward unit normal vector~$\nbf_D$,
and diameter~$\h_D$,
$L^2(D)$ is the space of square integrable functions over~$D$
and~$L^2_0(D)$ is its subspace of functions with zero average over~$D$.
$H^s(D)$ denotes the Sobolev space of order~$s$ in~$\Nbb$;
we consider fractional order spaces defined by interpolation theory.
$H^1_0(D)$ denotes the subspace of~$H^1(D)$ consisting of functions with zero
trace over the boundary of~$D$.
We endow each Sobolev space with seminorm, norm, and bilinear forms
\[
\SemiNorm{\cdot}_{s,D},
\qquad\qquad\qquad
\Norm{\cdot}_{s,D},
\qquad\qquad\qquad
(\cdot,\cdot)_{s,D}.
\]
The negative order Sobolev space $H^{-1}(D)$
are defined as the dual space of~$H^1_0(D)$
and endowed with the norm
\begin{equation} \label{negative-norm}
\Norm{v}_{-1,D}:=
\sup_{\phi \in H^1_0(D),\; \SemiNorm{\phi}_{1,D} \ne 0}
                \frac{_{-1}\langle v,\phi\rangle_{1,D}}{\SemiNorm{\phi}_{1,D}}
    \qquad\qquad   \forall v \in H^{-1}(D) .
\end{equation}
Vector and tensor valued Sobolev and Lebesgue spaces
are defined similarly,
and are denoted by replacing $H$ and~$L$
by $\Hbf$ and $\Hbb$, and $\Lbf$ and $\Lbb$;
vector fields are highlighted in boldface font,
and tensors are further underlined
(e.g., $v$, $\vbf$, and $\underline {\vbf}$).
On most occasions, Roman and Greek letters
will be employed for vectors and tensors, respectively.

We usual standard notation for differential operators.
In particular, $\nablaS \cdot$ and~$\divbf \cdot$
are the symmetric gradient of a vector field
and the divergence operator of a tensor.
We further define the space~$\HScal^s(\divbf,D)$, $s\ge 0$,
of $\Hbb^s(D)$ symmetric tensors~$\sigmabold$
such that~$\divbf \sigmabold$ is in~$\Hbf^s(D)$.
We simply write $\Hbb(\divbf,D)$ if~$s=0$.
For tensors~$\sigmabold$ in $\HScal^s(\divbf,D)$,
the image space of the map (called trace operator) that associates $\sigmabold$
with $\sigmabold_{|\partial D}\ \nbf_D$ is called $\Hbf^{-\frac12}(\partial D)$,
which we endow with the norm
\[
\Norm{\sigmabold \nbf_D}_{-\frac12,\partial D}
:= \sup_{\vbf \in \Hbb^1(D),\; \Norm{\vbf}_{1,D}\ne 0}
    \frac{(\sigmabold,\nabla\vbf)_{0,D} 
            + (\divbf \sigmabold, \vbf)_{0,D}}{\h_D^{-\frac12}\Norm{\vbf}_{0,\partial D}
            +\SemiNorm{\vbf}_{\frac12,\partial D}}.
\]
The trace operator above is surjective,
see, e.g., \cite[Lemma 2.1.2]{Boffi-Brezzi-Fortin:2013},
and continuous:
there exists a positive~$c_{tr}$ only depending on the shape of~$D$ such that
\begin{equation} \label{trace:Hdiv}
\Norm{\sigmabold\nbf_D}_{-\frac12,\partial D}
\le c_{tr}
    (\h_D^{-1} \Norm{\sigmabold}_{0,D}
     + \Norm{\divbf \sigmabold}_{0,D})
\qquad\qquad \forall \sigmabold \in \HScal^s(\divbf,D).
\end{equation}
The constant~$c_{tr}$ coincides with the constant
appearing in the right-inverse trace inequality;
see, e.g., \cite[Theorem~3.37]{McLean:2000}.
We denote the duality pairing between $H^{-\frac12}(\partial D)$
and $H^{\frac12}(\partial D)$
by $\langle \cdot, \cdot\rangle$.

Given~$\Ibb$ the identity tensor,
we introduce~$\dev \sigmabold = \sigmabold - d^{-1} \tr(\sigmabold)\Ibb$.
$\P_\p(D)$ represents the space of polynomials of maximum degree~$\p$ in~$\Nbb$ over~$D$.
We use the convention $\P_{-1}(D) = \{ 0 \}$.
Vector and tensor polynomial spaces
are denoted by replacing~$\P$
with~$\Pbf$ and~$\Pbb$.
The space of rigid body motions
\[
\RM(D):= 
\{ \rbf(\xbf) = \boldalpha + \omegabold \wedge \xbf \mid
                \boldalpha\in\Rbb^3,\  \omegabold \in \Rbb^3     \}
\]
has dimension six and is spanned by
\[
(1,0,0);            \qquad
(0,1,0);            \qquad
(0,0,1);            \qquad
(1,0,0)\wedge\xbf;  \qquad
(0,1,0)\wedge\xbf;  \qquad
(0,0,1)\wedge\xbf .
\]
For positive constants~$a$ and~$b$,
we write~$a \lesssim b$ if there exists a uniform positive constant~$c$
such that~$a\le c \ b$.
If~$a\lesssim b$ and~$b \lesssim a$, we write~$a \approx b$.
On relevant occasions, we shall pinpoint
the actual dependence of the hidden constant.

\paragraph*{The model problem.}
In what follows, we focus on 3D problems only,
albeit the forthcoming analysis
can be generalized to the 2D case with minor modifications.
Given a Lipschitz polyhedral domain~$\Omega$ in~$\Rbb^3$
with boundary~$\Gamma:=\partial \Omega$,
a symmetric, uniformly elliptic elasticity tensor
$\Dbb = \Cbb^{-1}:\Rbb^{3\times3}\to\Rbb^{3\times3}$
on the space of symmetric $3\times3$ matrices,
and~$\fbf$ in~$\Lbf^2(\Omega)$,
we consider the linear elasticity problem:
Find~$\ubf:\Omega\to\Rbb^3$ such that
\begin{equation} \label{strong-formulation}
\begin{cases}
     -\divbf \ \sigmabold = \fbf                & \text{in } \Omega\\
     \sigmabold = \Cbb\nablaS (\ubf) & \text{in } \Omega\\
     \sigmabold \nbf=\mathbf 0                 & \text{on } \partial \Omega.
\end{cases}
\end{equation}
In the standard isotropic elasticity case,
given nonnegative Lam\'e moduli~$\lambda\ge 0$ and~$\mu> 0$,
we have
\begin{equation} \label{Lame}
\Cbb \taubold:= 2 \mu \dev (\taubold)
                + \frac{2\mu+3\lambda}{3} \tr (\taubold)\Ibb,
\qquad\qquad
\Dbb \taubold:= \frac{\dev (\taubold)}{2\mu}
                + \frac{\tr (\taubold)\Ibb}{3(2\mu+3\lambda)}.
\end{equation}
We introduce the spaces and inner product 
\small{\begin{equation} \label{spaces&bfs}
\begin{split}
& \Vbf:= \{\vbf\in\Lbf^2(\Omega) \mid 
(\vbf,\rbf)_{0,\Omega} = 0 \quad \forall \rbf \in \RM(\Omega) \},\\
& \Sigmabold:=
\{ \taubold \in \HScal(\divbf,\Omega) \mid
\langle \taubold \ \nbf , \vbf \rangle = 0
\quad \forall \vbf \in \Hbf^1(\Omega) \},
\qquad a(\sigmabold,\taubold)
:= (\Dbb \sigmabold,\taubold)_{0,\Omega}
\quad \forall \sigmabold, \taubold \in \Sigmabold.
\end{split}
\end{equation}}\normalsize
We endow the space~$\Vbf$ with~$\Norm{\cdot}_{\Vbf}$, which is the usual~$L^2$ norm,
and~$\Sigmabold$ with the norm given by
\[
\Norm{\cdot}_{\Sigmabold}^2
:= \Norm{\cdot}_{0,\Omega}^2
    + \hOmega^2 \Norm{\div \cdot}^2_{0,\Omega}.
\]
The HR weak formulation of~\eqref{strong-formulation} is
\begin{equation} \label{weak-formulation}
\begin{cases}
\text{Find } (\sigmabold,\ubf) \in \Sigmabold \times \Vbf \text{ such that}\\
a(\sigmabold,\taubold) + (\divbf \taubold, \ubf)_{0,\Omega} = 0   & \forall \taubold \in \Sigmabold\\
(\divbf \sigmabold, \vbf)_{0,\Omega} = - (\fbf,\vbf)_{0,\Omega}       & \forall \vbf \in \Vbf.
\end{cases}
\end{equation}
Theorem~\ref{theorem:stability-HR} below guarantees that problem~\eqref{weak-formulation}
is well-posed even for inhomogeneous essential boundary conditions,
with a priori bounds
\[
\Norm{\sigmabold}_{\Sigmabold}
+ \Norm{\ubf}_{\Vbf}
\le C \Norm{\fbf}_{0,\Omega}.
\]
Above, $C$ is a positive constant, which only depends
on the ratio between the diameter of~$\Omega$
and the radius of the largest ball contained in~$\Omega$.
If other types of boundary conditions are considered,
then the well-posedness of the problem can be found, e.g.,
in \cite[Chapter~8]{Boffi-Brezzi-Fortin:2013};
in this case, we are not able to provide an explicit dependence for the stability constant.

\paragraph*{Regular meshes of polytopic elements.}
Henceforth, we focus on three dimensional domains.
Even though the forthcoming analysis is only concerned with local stability and interpolation estimates,
we discuss regularity assumptions for given meshes of polytopic elements,
as the results we shall discuss
can be used in the analysis of virtual elements
for problem~\eqref{weak-formulation} below.

Given~$X$ either an edge, facet, or polyhedron,
we introduce its diameter~$h_X$, centroid~$\xbf_X$, and measure~$\vert X \vert$.
Given~$\E$ a polyhedron, its set of facets is~$\FcalE$
and the outward unit normal vector is~$\nbfE$.
Given $\F$ a facet of the mesh, its set of edges is $\EcalF$.

We consider sequences of regular meshes~$\{ \taun \}$
of polytopic elements in the following sense:
there exists a positive~$\rho$ such that, for all meshes~$\taun$ in the sequence,
\begin{itemize}
    \item each~$\E$ in~$\taun$ is star-shaped with respect to a ball of radius larger than or equal to~$\rho\hE$;
    \item for each~$\E$ in~$\taun$, each facet~$\F$ in~$\FcalE$
    is star-shaped with respect to a disk of radius larger than or equal to~$\rho \hF$,
    and is such that $\hF$ is larger than or equal to~$\rho\hE$;
    \item for each~$\E$ in~$\taun$ and~$\F$ in~$\FcalE$, every edge~$\e$ in~$\EcalF$
    is such that $\he$ is larger than or equal to~$\rho \hF$.
\end{itemize}
The above assumptions may be relaxed; yet,
we stick to the above setting
for the presentation's sake.
Henceforth, given the Lam\'e parameters~$\lambda$ and~$\mu$
as in~\eqref{Lame}, we assume that
\begin{equation} \label{D-C-constants}
\lambda \text{ and (consequently) } \mu \text{ in~\eqref{Lame}
are piecewise constant over any mesh in } \{\taun\}.
\end{equation}
For future convenience, we expand the bilinear form~$a(\cdot,\cdot)$
into a sum of local contribution over the elements
of a mesh~$\taun$:
\begin{equation} \label{splitting:bf-a}
    a(\sigmabold,\taubold) 
    =  \sum_{\E\in\taun} \aE(\sigmabold, \taubold)
    := \sum_{\E\in\taun} (\Dbb \sigmabold,\taubold)_{0,\E}.
\end{equation}

%%%%
\paragraph*{Outline of the paper.}
%%%%
In Section~\ref{section:VE}, we introduce HR type virtual elements
and discretize $L^2$-type inner products using polynomial projections
and stabilizing bilinear forms.
In Section~\ref{section:stability}, we discuss stability bounds,
which we use
in Section~\ref{section:interpolation}
to derive interpolation estimates in HR VE spaces.
We assess numerically the behaviour of the stabilizations
on sequences of badly-shaped elements
and for increasing degree of accuracy in Section~\ref{section:numerics}.
Conclusions are drawn in Section~\ref{section:conclusions},
while technical results from the PDE theory
are investigated in Appendix~\ref{appendix}.

%%%%%%%%%%%%%%%%%%%%%%%%%%%%%%%%%%%%%%%%%%%%%%%%%%%%%%%%%%%%%%%%%%%%%%%%%%
\section{Hellinger--Reissner virtual element spaces and forms} \label{section:VE}
%%%%%%%%%%%%%%%%%%%%%%%%%%%%%%%%%%%%%%%%%%%%%%%%%%%%%%%%%%%%%%%%%%%%%%%%%%
We review the construction of HR virtual elements,
the choice of suitable degrees of freedom,
the definition of certain polynomial spaces and polynomial projections,
and the design of the discrete bilinear forms.

\paragraph*{Polynomial spaces.}
We introduce the space of vector polynomials
that are orthogonal in $L^2(\E)$ to rigid body motions:
\[
\RMperp_\p(\E) :=
\{ \qbfp \in \Pbf_\p(\E) \mid
    (\qbfp,\rbf)_{0,\E} = 0 \quad \forall \rbf \in \RM(\E) \}
\]
and note that
\begin{equation} \label{orthogonal-splitting}
\Pbf_\p(\E)
= \RM(\E) \oplus_{\Lbf^2(\E)} \RMperp_\p(\E).
\end{equation}
For all elements~$\E$ in~$\taun$, we further introduce the space
\begin{equation} \label{TpE}
\TpE:= \{ \Cbb\nablaS (\qbfppo)
            \mid \qbfppo \in \Pbf_{\p+1}(\E)         \}.
\end{equation}

\paragraph*{Virtual element spaces.}
Given an element~$\E$ of~$\taun$ and~$\p$ in~$\Nbb$,
we define the Hellinger--Reissner virtual element space
\[
\SigmaboldhE:=
\{ \sigmaboldh \in \HScal(\divbf,\E) \mid  \sigmaboldh 
        \text{ solves weakly } \eqref{local-functions-strong} \text{ below} \},
\]
where
\begin{equation} \label{local-functions-strong}
\begin{cases}
\sigmaboldh = \Cbb \nablaS (\wbfstar)\\
 \divbf \sigmaboldh = \qbf_\p \in \Pbf_\p(\E)
        & \qquad \text{for some vector field } \wbfstar. \\
\sigmaboldh\nbfE{}_{|\F} = \qbftildepF \in \Pbf_\p(\F) \quad \forall \F \in \FcalE
\end{cases}
\end{equation}
Given $\aE(\cdot,\cdot)$ as in~\eqref{splitting:bf-a},
the discrete tensor~$\sigmaboldh$ defined in~\eqref{local-functions-strong} solves
\begin{equation} \label{local-functions-weak}
\begin{cases}
\text{Find } \sigmaboldh \in \HScal(\divbf,\E),\ \wbfstar \in \Lbf^2(\E) \text{ such that} \\
\aE(\sigmaboldh, \taubold) + (\divbf \taubold, \wbfstar)_{0,\E} = 0   & \forall \taubold \in \HScal(\divbf,\E) \\
(\divbf \sigmaboldh, \zbfstar)_{0,\E} = (\qbf_\p, \zbfstar)_{0,\E}          & \forall \zbfstar \in \Lbf^2(\E)\\
\sigmaboldh\nbfE {}_{|\F} = \qbftildepF                                  & \forall \F \in \FcalE .
\end{cases}
\end{equation}
Since the displacement~$\wbfstar$ in~\eqref{local-functions-weak}
is unique up to rigid body motions,
we fix it so as to be
$L^2$-orthogonal to~$\RM(\E)$.

Since~$\Cbb$ is a constant tensor over~$\E$, see~\eqref{D-C-constants},
the space~$\TpE$ defined in~\eqref{TpE} only contains polynomials up to order~$\p$.
Therefore, we have the inclusion
\begin{equation} \label{polynomials-in-space}
    \TpE \subset \SigmaboldhE.
\end{equation}

\begin{remark}
In principle, a lowest order element can be designed as well.
In particular, no internal degrees of freedom would be required
and slightly different boundary conditions would be imposed in~\eqref{local-functions-strong};
we refer to~\cite{Artioli-DeMiranda-Lovadina-Patruno:2017, Dassi-Lovadina-Visinoni:2020}
for more details.
The forthcoming analysis generalizes straightforwardly also to this setting,
whence we shall skip the details
for the presentation's sake.
\end{remark}

\paragraph*{Degrees of freedom.}
Hereafter, by scaled polynomial fields we mean polynomial fields~$\qbfp$
with $\Norm{\qbfp}_{L^\infty(X)}=1$, $X=\E$ or~$\F$.
Given~$\tauboldh$ in~$ \HScal(\divbf,\E)$,
we consider the following linear functionals:
\begin{itemize}
    \item for all facets~$\F$ of~$\E$, given a basis~$\{ \mboldalphaF \}$
    of~$\Pbf_\p(\F)$ of scaled polynomial fields
    that are shifted with respect to~$\xbf_\F$,
    the scaled vector moments of the tractions on~$\F$:
    \begin{equation} \label{face-moments}
        \frac{1}{\vert\F\vert} \int_{\F} \tauboldh\nbfE \cdot \mboldalphaF ;
    \end{equation}
    \item given a basis~$\{ \mboldalphaperp \}$
    of~$\RMperp_\p(\E)$ of scaled polynomial fields
    that are shifted with respect to~$\xbfE$,
    the interior scaled vector moments of the divergence:
    \begin{equation} \label{interior-moments}
        \frac{\hE}{\vert\E\vert} \int_{\E} \divbf\tauboldh \cdot \mboldalphaperp.
    \end{equation}
\end{itemize}
The above functionals form a set of unisolvent degrees of freedom for~$\SigmaboldhE$;
see \cite[Remark~1]{Visinoni:2024}.

%%%%%
\paragraph*{Polynomial projectors.}
We introduce several projectors.
For each element~$\E$,
the first one is $\PiRM: \Lbf^2(\E) \to \RM(\E)$ given by
\begin{equation} \label{RM-L2-projection}
(\vbf -\PiRM \vbf, \qbfpRM)_{0,\E} = 0
\qquad \qquad \forall \vbf \in \Lbf^2(\E) ,\; \qbfpRM \in \RM(\E) .
\end{equation}
Next, we define $\PiRMperp: \Lbf^2(\E) \to \RMperp(\E)$ as
\begin{equation} \label{RM-perp-L2-projection}
(\vbf - \PiRMperp \vbf, \qbfpperp)_{0,\E}=0
\qquad \qquad \forall \vbf \in \Lbf^2(\E),
\quad \qbfpperp \in \RMperp(\E).
\end{equation}

%%%
\begin{remark} \label{remark:computability-divergence}
For~$\tauboldh$ in~$\SigmaboldhE$,
$\PiRM \divbf \tauboldh$ can be computed for~$\tauboldh$ in~$\SigmaboldhE$
using the boundary degrees of freedom~\eqref{face-moments};
$\PiRMperp \divbf \tauboldh$
using the interior degrees of freedom~\eqref{interior-moments}.
Therefore, \eqref{orthogonal-splitting} implies that
$\divbf\tauboldh$ is available in closed-form.
\end{remark}

Additionally, we define~$\PiTpE: \Lbb^2(\E) \to \TpE$ as
\begin{equation} \label{definition:PiT}
(\Dbb(\taubold - \PiTpE \taubold), \qbbpT)_{0,\E}
= 0
\qquad\qquad\qquad\qquad \forall \qbbpT \in \TpE.
\end{equation}
or equivalently
\[
(\taubold - \PiTpE \taubold, \nablaS(\qbfppo))_{0,\E}
= 0
\qquad\qquad\qquad\qquad \forall \qbfppo \in \Pbf_{\p+1}(\E).
\]
The operator~$\PiTpE$ is computable for functions in~$\SigmaboldhE$
using the degrees of freedom \eqref{face-moments} and~\eqref{interior-moments}.
In fact, as observed for the 2D case
in~\cite[eq. (26)]{Artioli-DeMiranda-Lovadina-Patruno:2018},
the orthogonality condition in~\eqref{definition:PiT}
for $\taubold$ replaced by $\tauboldh$ in $\SigmaboldhE$
is equivalent to find~$\rbfppo$ in~$\Pbf_{\p+1}(\E)$ such that
\[
(\Cbb\nablaS(\rbfppo),
    \nablaS (\qbfppo))_{0,\E}
= (\tauboldh, \nablaS(\qbfppo))_{0,\E}
\qquad \forall \qbfppo \in \Pbf_{\p+1}(\E) .
\]
The right-hand side above is computable
via the degrees of freedom~\eqref{face-moments} and~\eqref{interior-moments}
after an integration by parts.

Finally, on each facet~$\F$, define~$\PizF: \Lbf^2(\F) \to \Pbf_\p(\F)$ as
\begin{equation}\label{L2-projector-face}
    (\vbf - \PizF \vbf, \qbfpF)_{0,\F} = 0
    \qquad \qquad \forall \vbf \in \Lbf^2(\F),
    \qquad \forall \qbfpF \in \Pbf_\p(\F).
\end{equation}

\paragraph*{Discrete bilinear forms and stabilizations.}
In the HR virtual element method,
the local $L^2$ inner product $\aE(\sigmabold,\taubold)$
is discretized using two ingredients.
The first one is the projector~$\PiTpE$ in~\eqref{definition:PiT};
the second one is a bilinear
form~$\SE(\cdot,\cdot): \SigmaboldhE \times \SigmaboldhE \to \Rbb$
satisfying two properties:
\begin{itemize}
    \item $\SE(\cdot,\cdot)$ is computable only using
    the degrees of freedom~\eqref{face-moments} and~\eqref{interior-moments};
    \item $\SE(\cdot,\cdot)$ ``scales'' like $\Norm{\cdot}_{0,\E}^2$ on the kernel of~$\PiTpE$,
    i.e., there exist positive constants~$\alpha_* \le \alpha^*$
    independent of~$\lambda$, $\mu$, and~$\hE$ such that
    \begin{equation} \label{stability-bounds}
    \mu^{-1}_{|\E} \alpha_* \Norm{\tauboldh}_{0,\E}^2
    \le \SE(\tauboldh,\tauboldh)
    \le \mu^{-1}_{|\E} \alpha^* \Norm{\tauboldh}_{0,\E}^2
    \qquad\qquad \forall \tauboldh \in \SigmaboldhE \cap \ker (\PiTpE).
    \end{equation}
    The constants~$\alpha_*$ and~$\alpha^*$ may depend
    on the regularity parameter~$\rho$ in Section~\ref{section:introduction}
    and~$\p$ in~\eqref{local-functions-strong}.
\end{itemize}
\begin{remark} \label{remark:true-stab-bounds}
For more standard virtual elements
(such as those for the approximation of the Poisson problem,
the Stokes equations, edge, and face virtual elements \dots),
the bounds in \eqref{stability-bounds} can be made sharper
\cite[Corollary~2]{Mascotto:2023}:
the lower bound is typically proven for functions (fields, tensors, \dots)
in the virtual element space;
the upper bound for functions (fields, tensors, \dots)
in a Sobolev space with extra zero average conditions.
In the HR virtual element setting, this is not possible;
see the proof of Proposition~\ref{proposition:equivalence-two-stab} below.
\end{remark}

Let~$\Ibb$ be the identity operator.
With the two ingredients above at hand,
we introduce the local discrete bilinear forms
\begin{equation} \label{discrete:bf}
\ahE(\sigmaboldh,\tauboldh)
:=  \aE(\PiTpE \sigmaboldh, \PiTpE \tauboldh)
   + \SE( (\Ibb -\PiTpE)\sigmaboldh, (\Ibb-\PiTpE) \tauboldh ) 
   \qquad\qquad \forall\E\in\taun.
\end{equation}
In Section~\ref{section:stability} below,
we shall exhibit an explicit choice of~$\SE(\cdot,\cdot)$
and prove the corresponding stability bounds~\eqref{stability-bounds}.

%%%%%%%%%%%%%%%%%%%%%%%%%%%%%%%%%%%%%%%%%%%%%%%%%%%%%%%%%%%%%%%%%%%%%%%%%%
\section{Explicit stabilizations and stability bounds} \label{section:stability}
%%%%%%%%%%%%%%%%%%%%%%%%%%%%%%%%%%%%%%%%%%%%%%%%%%%%%%%%%%%%%%%%%%%%%%%%%%
We introduce two explicit stabilizations
and prove stability bounds as in~\eqref{stability-bounds}.
More precisely, we investigate
a ``projection-based'' stabilization in Section~\ref{subsection:projection-based-stab}
and a ``dofi-dofi'' stabilization in Section~\ref{subsection:dofi-dofi-stab}.

%%%%%%%%%%%%
\subsection{A ``projection-based'' stabilization} \label{subsection:projection-based-stab}
%%%%%%%%%%%%
Given~$\PiRMperp$ as in~\eqref{RM-perp-L2-projection},
for all $\E$ in $\taun$, and all $\sigmaboldh$ and~$\tauboldh$ in~$\SigmaboldhE$,
we define
\begin{equation} \label{stabilization:general-order}
    \SE(\sigmaboldh,\tauboldh)
    := \mu^{-1}_{|\E} \hE \sum_{\F \in \FcalE} (\sigmaboldh\nbfE, \tauboldh\nbfE)_{0,\F}
       + \mu^{-1}_{|\E}\hE^2 (\PiRMperp\divbf \sigmaboldh, \PiRMperp \divbf \tauboldh )_{0,\E}.
\end{equation}
A practical alternative option discussed in the literature
\cite{Artioli-DeMiranda-Lovadina-Patruno:2017, Artioli-DeMiranda-Lovadina-Patruno:2018}
is the stabilization above
replacing $\mu^{-1}_{|\E}$ by~$\tr(\Dbb)$.

\begin{proposition} \label{proposition:stability-HR-general-order}
The bilinear form~$\SE(\cdot,\cdot)$ defined in~\eqref{stabilization:general-order}
is computable by means of the degrees of
freedom~\eqref{face-moments} and~\eqref{interior-moments},
and satisfies the stability bounds~\eqref{stability-bounds}.
In fact, the result holds true also for tensors~$\tauboldh$
such that~$\PiTpE\tauboldh$ is different from the zero tensor.
\end{proposition}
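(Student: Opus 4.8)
The plan is to handle computability first, then reduce the two bounds in \eqref{stability-bounds} to a single scale-invariant norm equivalence, and finally prove that equivalence in two asymmetric steps. Computability is immediate: every $\tauboldh\in\SigmaboldhE$ has $\tauboldh\nbfE{}_{|\F}\in\Pbf_\p(\F)$ by the very definition \eqref{local-functions-strong}, so $\Norm{\tauboldh\nbfE}_{0,\F}$ is a function of the face moments \eqref{face-moments} alone, while Remark~\ref{remark:computability-divergence} recovers $\PiRMperp\divbf\tauboldh$ from the interior moments \eqref{interior-moments}; hence both summands of \eqref{stabilization:general-order} are computable. For the bounds I factor out $\mu^{-1}_{|\E}$ (it occurs identically on both sides) and rescale to a reference element of unit diameter; since $\Norm{\tauboldh}_{0,\E}^2$, $\hE\sum_\F\Norm{\tauboldh\nbfE}_{0,\F}^2$, and $\hE^2\Norm{\PiRMperp\divbf\tauboldh}_{0,\E}^2$ scale identically, it suffices to prove
\[
\Norm{\tauboldh}_{0,\E}^2 \approx \hE\sum_{\F\in\FcalE}\Norm{\tauboldh\nbfE}_{0,\F}^2 + \hE^2\Norm{\PiRMperp\divbf\tauboldh}_{0,\E}^2
\]
with constants depending only on $\rho$ and $\p$; write $B$ and $D$ for the two terms on the right.

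For the lower bound $\Norm{\tauboldh}_{0,\E}^2\lesssim B+D$ I split $\tauboldh=\dev\tauboldh+\tfrac13\tr(\tauboldh)\Ibb$ and control each part robustly in $\lambda$. The rigid-motion part of the divergence is governed by the tractions alone: testing $\divbf\tauboldh$ against $\rbf\in\RM(\E)$ and integrating by parts (using $\nablaS\rbf=\zerobf$) gives $(\divbf\tauboldh,\rbf)_{0,\E}=\sum_\F(\tauboldh\nbfE,\rbf)_{0,\F}$, so that $\hE^2\Norm{\divbf\tauboldh}_{0,\E}^2\lesssim B+D$. The trace (pressure-like) part $\tr\tauboldh$ is the delicate one: instead of using the coercivity of $\aE$, whose volumetric constant degenerates as $\lambda\to\infty$, I control it through the surjectivity of $\divbf\colon\Hbf^1_0(\E)\to L^2_0(\E)$ — a Stokes-type inf-sup with $\rho$-explicit constant — which yields $\Norm{\tr\tauboldh}_{0,\E}\lesssim\hE\Norm{\divbf\tauboldh}_{0,\E}+\Norm{\dev\tauboldh}_{0,\E}+B^{1/2}$ with a constant independent of $\lambda,\mu$. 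The deviatoric part is bounded using the energy identity $\aE(\tauboldh,\tauboldh)=\sum_\F(\wbfstar,\tauboldh\nbfE)_{0,\F}-(\wbfstar,\divbf\tauboldh)_{0,\E}$ together with a Korn--Poincaré bound $\Norm{\wbfstar}_{0,\E}\lesssim\hE\,\mu^{-1}_{|\E}\Norm{\tauboldh}_{0,\E}$ for the $\RM$-orthogonal displacement, giving $\Norm{\dev\tauboldh}_{0,\E}^2\lesssim\Norm{\tauboldh}_{0,\E}(B+D)^{1/2}$. Collecting the three estimates produces $\Norm{\tauboldh}_{0,\E}^2\lesssim\Norm{\tauboldh}_{0,\E}(B+D)^{1/2}+(B+D)$, which closes by Young's inequality.

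For the upper bound $B+D\lesssim\Norm{\tauboldh}_{0,\E}^2$ I integrate by parts against polynomial test fields. Bounding each traction by a lifting $\Gbf_\F\in\Hbf^1(\E)$ of $\tauboldh\nbfE{}_{|\F}$ that vanishes on the remaining facets, with $\Norm{\Gbf_\F}_{0,\E}\lesssim\hE^{1/2}\Norm{\tauboldh\nbfE}_{0,\F}$ and $\Norm{\nablaS\Gbf_\F}_{0,\E}\lesssim\hE^{-1/2}\Norm{\tauboldh\nbfE}_{0,\F}$, yields $\hE\Norm{\tauboldh\nbfE}_{0,\F}^2\lesssim\Norm{\tauboldh}_{0,\E}^2+\hE^2\Norm{\divbf\tauboldh}_{0,\E}^2$; testing $\divbf\tauboldh$ against $\qbfpperp\in\RMperp_\p(\E)$ and using a polynomial inverse inequality gives $D\lesssim\Norm{\tauboldh}_{0,\E}^2+B$. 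Both reduce the claim to $\hE^2\Norm{\divbf\tauboldh}_{0,\E}^2\lesssim\Norm{\tauboldh}_{0,\E}^2$, which is \emph{not} elementary — it is false for generic $\Hbb(\divbf,\E)$ tensors and exploits that $\divbf\tauboldh\in\Pbf_\p(\E)$ has bounded degree together with the saddle-point characterization \eqref{local-functions-weak} of $\SigmaboldhE$. I establish it by a duality argument, namely the well-posedness of the local problem \eqref{local-functions-weak} measured with solution in $\Lbf^2(\E)$ and data $(\tauboldh\nbfE{}_{|\F},\PiRMperp\divbf\tauboldh)$ in the norm $(B+D)^{1/2}$; the polynomial degree of the data is precisely what makes the associated inf-sup hold with a $\rho,\p$-explicit constant.

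The main obstacle is the $\lambda$-robustness: the volumetric component $\tr\tauboldh$ is invisible to the elasticity coercivity as $\lambda\to\infty$ and must be recovered from the divergence inf-sup above. A secondary difficulty is the genuine coupling between the boundary tractions and the interior divergence, which renders the naive iterated estimates circular and is disentangled only through the local well-posedness; these are exactly the ``well-posedness results of mixed formulation with certain boundary conditions'' deferred to the appendix. Finally, since none of the estimates above ever uses $\PiTpE\tauboldh=\zerobf$ — they are carried out for every $\tauboldh\in\SigmaboldhE$ — the stated extension to tensors with $\PiTpE\tauboldh\neq\zerobf$ is automatic.
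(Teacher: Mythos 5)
Your computability argument and your lower bound are sound; the lower bound is in fact a genuinely different route from the paper's (the paper simply invokes the a priori estimate of Theorem~\ref{theorem:stability-HR} applied to the local problem~\eqref{local-functions-weak}, whereas you re-derive that estimate locally through the deviatoric/trace splitting, a local Stokes inf-sup, and the energy identity for $\wbfstar$ --- more self-contained, same explicit-constant ingredients). The genuine gap is in the upper bound, precisely at the step you yourself single out as the crux: $\hE^2\Norm{\divbf\tauboldh}_{0,\E}^2\lesssim\Norm{\tauboldh}_{0,\E}^2$. Your justification --- ``the well-posedness of the local problem~\eqref{local-functions-weak} measured with solution in $\Lbf^2(\E)$ and data in the norm $(B+D)^{1/2}$'' --- runs in the wrong direction: well-posedness bounds the \emph{solution} $\tauboldh$ by its \emph{data} $(\PiRMperp\divbf\tauboldh,\tauboldh\nbfE{}_{|\F})$, i.e., it is exactly the lower bound $\Norm{\tauboldh}_{0,\E}^2\lesssim B+D$ you already proved, and no stability statement of that type can bound a piece of the data ($\divbf\tauboldh$) by the solution ($\Norm{\tauboldh}_{0,\E}$). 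Moreover, within your own framework every integration by parts against polynomials or boundary liftings produces a boundary term of size $B^{1/2}$, while $B$ itself is only controlled through $\hE^2\Norm{\divbf\tauboldh}_{0,\E}^2$; you acknowledge this circularity, but the proposed disentanglement is not a proof.

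The missing idea --- and the way the paper closes this step --- is to test against fields that vanish on $\partial\E$, so that no boundary term appears at all: for $\vbf\in\Hbf^1_0(\E)$, integration by parts gives $(\divbf\tauboldh,\vbf)_{0,\E}=-(\tauboldh,\nablaS \vbf)_{0,\E}\le\Norm{\tauboldh}_{0,\E}\SemiNorm{\vbf}_{1,\E}$, whence $\Norm{\divbf\tauboldh}_{-1,\E}\le\Norm{\tauboldh}_{0,\E}$; then, since $\divbf\tauboldh$ belongs to $\Pbf_\p(\E)$, the $L^2$--$H^{-1}$ polynomial inverse inequality (constant depending only on $\rho$ and $\p$) upgrades this to $\hE\Norm{\divbf\tauboldh}_{0,\E}\lesssim\Norm{\divbf\tauboldh}_{-1,\E}\le\Norm{\tauboldh}_{0,\E}$. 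Note also that, contrary to your claim, the saddle-point characterization of $\SigmaboldhE$ plays no role in this inequality: only the fact that the divergence is a polynomial of bounded degree is used. With this step inserted, your chain $B\lesssim\Norm{\tauboldh}_{0,\E}^2+\hE^2\Norm{\divbf\tauboldh}_{0,\E}^2$ and $D\lesssim\Norm{\tauboldh}_{0,\E}^2+B$ closes, and the rest of your proposal stands, including the observation that the kernel condition $\PiTpE\tauboldh=\zerobf$ is never used.
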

\begin{proof}
The computability of the bilinear form follows from the computability
of~$\PiRMperp \divbf(\cdot)$ for functions in~$\SigmaboldhE$,
see Remark~\ref{remark:computability-divergence},
and the fact that tractions on facets are given polynomials,
see~\eqref{local-functions-strong}.
\medskip

We assume that~$\hE=1$.
The general assertion is a consequence of a scaling argument.
In what follows, let~$\tauboldh$ solve~\eqref{local-functions-strong}
(or equivalently~\eqref{local-functions-weak}).

Since the symmetric gradient of a rigid body motion is the zero tensor, we have
\begin{equation} \label{property:sym-grad&RM}
    \nablaS(\divbf \tauboldh)
    = \nablaS(\PiRMperp \divbf \tauboldh)
    \qquad\qquad\qquad \forall \tauboldh \in \SigmaboldhE.
\end{equation}

\noindent \paragraph*{The lower bound.}
Using Theorem~\ref{theorem:stability-HR} for the local problem~\eqref{local-functions-weak} 
on~$\E$, we deduce the existence of a positive constant $c_{ST}$
depending only on~$\rho$ in Section~\ref{section:introduction} such that
\begin{equation} \label{lower-bound:last-but-one-general-order}
\Norm{\tauboldh}_{0,\E}
\le \Norm{\tauboldh}_{\div,\E}
+ \Norm{\wbfstar}_{0,\E}
\le c_{ST} \left( \Norm{\divbf \tauboldh}_{0,\E}
+ \Norm{\tauboldh\nbfE}_{0,\partial \E} \right).
\end{equation}
Integrating by parts (IBP),
and using~\eqref{property:sym-grad&RM},
the Cauchy-Schwarz (CS) inequality,
the~$H^1-L^2$ polynomial inverse
inequality (II)~\cite{Verfurth:2013} (constant~$c_{inv}^A$),
the fact that~$\div \tauboldh$ is a vector polynomial,
an inverse $L^2$ trace inequality (ITI)~\cite{Verfurth:2013}
bounding the~$L^2$ norm on the boundary
by the $L^2$ norm in the interior (constant~$c_{inv}^B$),
and two Young's (Y) inequalities with corresponding
positive constants~$\varepsilon_1$ and~$\varepsilon_2$,
we write
\small{\[
\begin{split}
\Norm{\divbf \tauboldh}_{0,\E}^2
& = \int_\E \divbf \tauboldh \cdot \divbf \tauboldh
  \overset{\text{(IBP)}}{=}
  -\int_\E \tauboldh : \nablaS(\divbf \tauboldh)
    + \int_{\partial\E} \tauboldh\nbfE \cdot \divbf \tauboldh\\
& \overset{\eqref{property:sym-grad&RM}}{=}
    -\int_\E \tauboldh : \nablaS(\PiRMperp \divbf\tauboldh)
    + \int_{\partial\E} \tauboldh\nbfE \cdot \divbf \tauboldh\\
& \overset{\text{(CS)}}{\le}
    \Norm{\tauboldh}_{0,\E}
        \SemiNorm{\PiRMperp \divbf \tauboldh}_{1,\E}
        + \Norm{\tauboldh\nbfE}_{0,\partial\E}
      \Norm{\divbf \tauboldh}_{0,\partial\E}\\
&   \overset{\text{(II),\, (ITI)}}{\le} (c_{inv}^A+c_{inv}^B) 
    \Big( \Norm{\tauboldh}_{0,\E}
        \Norm{\PiRMperp \divbf \tauboldh}_{0,\E}
        + \Norm{\tauboldh\nbfE}_{0,\partial\E}
      \Norm{\divbf \tauboldh}_{0,\E} \Big)\\
&   \overset{\text{(Y)}}{=} \frac12 (c_{inv}^A+c_{inv}^B) 
        \Big(\varepsilon_1^2 \Norm{\tauboldh}_{0,\E}^2
        + \varepsilon_1^{-2} \Norm{\PiRMperp \divbf \tauboldh}_{0,\E}^2
        + \varepsilon_2^{-2} \Norm{\tauboldh\nbfE}_{0,\partial\E}^2
        + \varepsilon_2^2 \Norm{\divbf \tauboldh}_{0,\E}^2\Big).
\end{split}
\]}\normalsize
Being the constants~$c_{inv}^A$ and~$c_{inv}^B$ related to polynomial inverse inequalities on (regular sub-tessellation on) polytopes,
they depend on the regularity parameter~$\rho$ in Section~\ref{section:introduction}
and~$\p$ in~\eqref{local-functions-strong}.

Taking~$\varepsilon_2$ sufficiently small,
we have that there exists a positive~$c_{\varepsilon_2}$
only depending on~$\varepsilon_2$,
$c_{inv}^A$, and $c_{inv}^B$ such that
\[
\Norm{\divbf \tauboldh}_{0,\E}
\le c_{\varepsilon_2} \Big( \varepsilon_1 \Norm{\tauboldh}_{0,\E}
                + \varepsilon_1^{-1} \Norm{\PiRMperp \divbf \tauboldh}_{0,\E}
                + \Norm{\tauboldh\nbfE}_{0,\partial\E}
                \Big).
\]
Inserting this bound in~\eqref{lower-bound:last-but-one-general-order} and taking~$\varepsilon_1$ sufficiently small yields the assertion:
there exists a positive constant~$c_{\varepsilon_1}$
only depending on~$\varepsilon_1$ and~$c_{\varepsilon_2}$
such that
\[
\Norm{\tauboldh}_{0,\E}
\le  c_{ST} \ c_{\varepsilon_1}
\Big(\Norm{\tauboldh\nbfE}_{0,\partial\E} 
        +  \Norm{\PiRMperp \divbf \tauboldh}_{0,\E} \Big).
\]
Notably, the constant~$\alpha_*$ in~\eqref{stability-bounds}
for the stabilization $\SE(\cdot,\cdot)$ in~\eqref{stabilization:general-order}
only depends on~$c_{inv}^A$, $c_{inv}^B$, and~$c_{ST}$.

\noindent \paragraph*{The upper bound.}
Using a polynomial inverse inequality~\cite{Verfurth:2013}
on~$\partial \E$ (constant~$c_{inv}^C$) entails
\[
\Norm{\tauboldh\nbfE}_{0,\partial \E}
\le c_{inv}^C
\Norm{\tauboldh\nbfE}_{-\frac12,\partial \E}.
\]
Being the constant~$c_{inv}^C$ related to polynomial inverse inequalities
on the boundary of polytopes,
it depends on the regularity parameter~$\rho$ in Section~\ref{section:introduction}
and~$\p$ in~\eqref{local-functions-strong}.

Next, we use the $H(\div)$ trace inequality \eqref{trace:Hdiv}
(constant~$c_{tr}$)
and get
\[
\Norm{\tauboldh\nbfE}_{-\frac12,\partial \E}
\le c_{tr} \left(\Norm{\tauboldh}_{0,\E}
                  + \Norm{\divbf \tauboldh}_{0,\E} \right).
\]
It suffices to estimate the second term on the right-hand side.
Using the $L^2 - H^{-1}$ polynomial inverse inequality~\cite{Verfurth:2013}
(constant~$c_{inv}^D$),
definition~\eqref{negative-norm},
an integration by parts,
and the Cauchy-Schwarz inequality
yields
\small\[
\Norm{\PiRMperp \divbf \tauboldh}_{0,\E}
\le \Norm{\divbf \tauboldh}_{0,\E}
\le c_{inv}^D \Norm{\divbf \tauboldh}_{-1,\E}
=  c_{inv}^D \sup_{\vbf \in \Hbf^1_{\zerobf}(\E)}
    \frac{(\divbf \tauboldh, \vbf)_{0,\E}}{\SemiNorm{\vbf}_{1,\E}}
\le c_{inv}^D \Norm{\tauboldh}_{0,\E},
\]\normalsize
thereby implying the upper bound in~\eqref{stability-bounds}.
The constant~$\alpha^*$ in~\eqref{stability-bounds}
for the stabilization $\SE(\cdot,\cdot)$ in~\eqref{stabilization:general-order}
only depends on~$c_{inv}^C$, $c_{inv}^D$, and~$c_{tr}$.
\end{proof}
%%

%%%%%%%%%%%%
\subsection{A ``dofi-dofi'' stabilization} \label{subsection:dofi-dofi-stab}
%%%%%%%%%%%%
With an abuse of notation, we denote the set of the degrees
of freedom of~$\SigmaboldhE$ by~$\{ \DoFbf \}$,
which we split into the union of facet
$\{ \DoFbf^\F \}$, for all~$\F$ facets of~$\E$,
and the divergence $\{ \DoFbfdiv \}$ degrees of freedom.

Given the standard $\ell^2$ inner product
$(\cdot,\cdot)_{\ell^2}$ for sequences,
we introduce the ``dofi-dofi''
stabilization~$\StildeE :\SigmaboldhE \times \SigmaboldhE \to \Rbb$
defined on any element~$\E$ as
\begin{equation} \label{stabilization:dofi-dofi}
\begin{split}
    \StildeE(\sigmaboldh, \tauboldh)
    & := \mu^{-1}_{|\E} \hE^3 (\DoFbf(\sigmaboldh), \DoFbf(\tauboldh))_{\ell^2} \\
    & = \mu^{-1}_{|\E} \hE^3 \sum_{\F \in \FcalE} (\DoFbf^\F(\sigmaboldh), \DoFbf^\F(\tauboldh))_{\ell^2}
      + \mu^{-1}_{|\E} \hE^3 (\DoFbfdiv(\sigmaboldh), \DoFbfdiv(\tauboldh))_{\ell^2}.
\end{split}
\end{equation}
We prove that this stabilization is equivalent to the
``projection based'' stabilization
in~\eqref{stabilization:general-order}
under a suitable choice of the polynomial bases
appearing in~\eqref{face-moments} and~\eqref{interior-moments}.

%%%%%%
\begin{proposition}  \label{proposition:equivalence-two-stab}
Given~$\SE(\cdot,\cdot)$ and~$\StildeE(\cdot, \cdot)$ the stabilizations defined
in~\eqref{stabilization:general-order} and~\eqref{stabilization:dofi-dofi},
there exist positive constants $\beta_* \le \beta^*$ such that
\[
\beta_* \SE(\tauboldh,\tauboldh)
\le \StildeE(\tauboldh,\tauboldh)
\le \beta^* \SE(\tauboldh,\tauboldh)
\qquad \qquad
\forall \tauboldh \in \SigmaboldhE,\quad \forall \E \in \taun.
\]
The constants~$\beta_*$ and~$\beta^*$ may depend
on the regularity parameter~$\rho$ in Section~\ref{section:introduction}
and~$\p$ in~\eqref{local-functions-strong}.
\end{proposition}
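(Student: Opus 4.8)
The plan is to exploit that both stabilizations split additively into a sum of facet contributions plus a single interior (divergence) contribution, and to prove the equivalence of each pair of contributions separately; since every term is nonnegative, summing the two resulting chains of inequalities immediately yields $\beta_* \SE(\tauboldh,\tauboldh) \le \StildeE(\tauboldh,\tauboldh) \le \beta^* \SE(\tauboldh,\tauboldh)$ with $\beta_*$ and $\beta^*$ the worst constants among the single contributions. For $\tauboldh \in \SigmaboldhE$, recall from \eqref{local-functions-strong} that $\tauboldh\nbfE{}_{|\F} \in \Pbf_\p(\F)$ and $\divbf \tauboldh \in \Pbf_\p(\E)$, so all quantities entering $\SE$ and $\StildeE$ are polynomial, and the whole statement reduces to equivalences between $L^2$ norms of polynomials and the $\ell^2$ norms of their normalized moments.

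First I would treat the facet contributions. Fixing $\F \in \FcalE$ and writing $\mathbf q := \tauboldh\nbfE{}_{|\F} \in \Pbf_\p(\F)$, I map $\F$ onto a reference facet $\widehat\F$ of unit diameter through $\xbf = \xbf_\F + \hF \widehat{\xbf}$. Because the basis $\{\mboldalphaF\}$ consists of scaled ($L^\infty$-normalized) fields shifted about $\xbf_\F$, it transforms into a fixed-type basis on $\widehat\F$, and the normalization $1/\vert\F\vert$ in \eqref{face-moments} makes each degree of freedom scale-invariant, i.e. $\DoFbf^\F(\tauboldh) = \widehat{\DoFbf}^\F(\widehat{\mathbf q})$. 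Since $\Norm{\mathbf q}_{0,\F}^2 = \hF^2 \Norm{\widehat{\mathbf q}}_{0,\widehat\F}^2$ and the linear map $\widehat{\mathbf q} \mapsto \widehat{\DoFbf}^\F(\widehat{\mathbf q})$ is an isomorphism on the finite-dimensional space $\Pbf_\p(\widehat\F)$ by unisolvence, norm equivalence on $\widehat\F$ gives $\Norm{\widehat{\mathbf q}}_{0,\widehat\F}^2 \approx \Norm{\DoFbf^\F(\tauboldh)}_{\ell^2}^2$. Combining these, and using $\hF \approx \hE$ from mesh regularity, I obtain $\hE \Norm{\tauboldh\nbfE}_{0,\F}^2 \approx \hE^3 \Norm{\DoFbf^\F(\tauboldh)}_{\ell^2}^2$, which is exactly the equivalence of the two facet terms (the common factor $\mu^{-1}_{|\E}$ plays no role).

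Next I would handle the interior contributions. The key observation is that, since each $\mboldalphaperp$ belongs to $\RMperp_\p(\E)$ and $\divbf \tauboldh - \PiRMperp \divbf \tauboldh \in \RM(\E)$ is $L^2(\E)$-orthogonal to it, the divergence degrees of freedom see only the $\RMperp$-component, i.e. $\int_\E \divbf \tauboldh \cdot \mboldalphaperp = \int_\E \PiRMperp \divbf \tauboldh \cdot \mboldalphaperp$; this is precisely why the interior term of $\StildeE$ can be compared with the $\PiRMperp$-weighted term of $\SE$. Mapping $\E$ onto a reference element $\widehat\E$ of unit diameter by $\xbf = \xbfE + \hE \widehat{\xbf}$ and repeating the scaling bookkeeping (now $\vert\E\vert = \hE^3 \vert\widehat\E\vert$, so the factor $\hE/\vert\E\vert$ in \eqref{interior-moments} contributes one net power of $\hE$), together with the norm equivalence on $\RMperp_\p(\widehat\E)$, yields $\hE^3 \Norm{\DoFbfdiv(\tauboldh)}_{\ell^2}^2 \approx \hE^2 \Norm{\PiRMperp \divbf \tauboldh}_{0,\E}^2$. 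Summing this with the facet equivalences concludes the proof.

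The only genuinely analytical point---and the step I expect to be the main obstacle---is to ensure that the norm-equivalence constants on $\Pbf_\p(\widehat\F)$ and $\RMperp_\p(\widehat\E)$ are uniform over the whole shape-regular family, so that $\beta_*$ and $\beta^*$ depend on $\rho$ and $\p$ alone. After rescaling to unit diameter, the reference facets and elements form a family that is uniformly star-shaped with respect to a ball of radius $\rho$; on such a family the condition number of the map sending a polynomial to its $L^\infty$-normalized moments is bounded in terms of $\rho$ and $\p$, which delivers the uniform constants. This is the same mechanism underlying the polynomial inverse estimates invoked in the proof of Proposition~\ref{proposition:stability-HR-general-order}, and it is where the explicit dependence on the geometry enters.
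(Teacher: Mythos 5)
Your proposal is correct and takes essentially the same route as the paper: both arguments compare the two stabilizations term by term (facet terms against facet moments, interior term against divergence moments, noting that the divergence dofs only see the $\RMperp$-component) and reduce everything to the uniform equivalence, on shape-regular domains, between the $L^2$ norm of a polynomial and the $\ell^2$ norm of its scaled moments, which the paper imports from \cite[Lemma~4.1]{Chen-Huang:2018} and you obtain by rescaling to a unit-diameter reference configuration with uniform condition-number bounds. The only cosmetic difference is that the paper gets the upper bound $\StildeE(\tauboldh,\tauboldh)\le\beta^*\SE(\tauboldh,\tauboldh)$ more cheaply, via a direct Cauchy--Schwarz estimate on each degree of freedom using $\Norm{\mboldbetaF}_{L^\infty(\F)}=1$, instead of invoking the full two-sided norm equivalence there as well.
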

%%%%%%
\begin{proof}
We show the lower and upper bound separately.

\paragraph*{The lower bound (part~1).}
We first show, for all facets~$\F$ in~$\FcalE$, the existence of a positive constant~$c_1$ such that
\begin{equation} \label{equivalence-lower-bound-face}
\hE \Norm{\tauboldh\nbfE}_{0,\F}^2
\le c_1 \hE^3 \Norm{\DoFbf^\F(\tauboldh)}_{\ell^2}^2
\qquad\qquad \forall \tauboldh \in \SigmaboldhE.
\end{equation}
We have the decomposition into polynomial fields on facets
\[
\tauboldh\nbfE{}_{|\F} =
\sum_{\vert \boldalpha \vert = 1}^{\p} \lambda^\F_{\boldalpha} \mboldalphaF
\qquad\qquad
\text{for given } \lambda^\F_{\boldalpha} \in \Rbb.
\]
Testing both sides by~$\mboldbetaF$,
scaling by the area of the facet~$\F$,
and using that the~$\tauboldh \nbfE$
are linear combinations
of scaled orthogonal polynomial fields, we deduce
\begin{equation} \label{relation-beta-facets}
\DoFbf^\F_{\boldbeta}(\tauboldh)
= \frac{1}{\vert \F \vert} \int_\F (\tauboldh\nbfE) \cdot \mboldbetaF
= \frac{1}{\vert \F \vert}
\sum_{\vert \boldalpha \vert = 0}^{\p} \lambda_{\boldalpha}^\F \int_\F \mboldalphaF \cdot \mboldbetaF .
\end{equation}
Collect the coefficients~$\lambda_{\boldalpha}^\F$ in a vector~$\boldlambda^\F$.
Using the arguments as in~\cite[Lemma~4.1]{Chen-Huang:2018},
there exists a positive constant $c_{CH}^A$,
which only depends on~$\rho$ in Section~\ref{section:introduction}
and~$\p$ in~\eqref{local-functions-strong}, such that
\[
\Norm{\tauboldh\nbfE}_{0,\F}^2
\le c_{CH}^A \hE^2 \Norm{\boldlambda^\F}_{\ell^2}^2.
\]
Further using~\eqref{relation-beta-facets}, we deduce~\eqref{equivalence-lower-bound-face}.

\paragraph*{The lower bound (part~2).}
We prove that there exists a positive constant~$c_2$ such that
\begin{equation} \label{equivalence-lower-bound-interior}
\hE^2 (\PiRMperp\divbf \sigmaboldh, \PiRMperp \divbf \tauboldh )_{0,\E}
\le c_2 \hE^3 (\DoFbfdiv(\sigmaboldh), \DoFbfdiv(\tauboldh))_{\ell^2}.
\end{equation}
We have the decomposition into polynomial fields,
which are orthogonal to rigid body motions,
\[
\PiRMperp \divbf \tauboldh
= \sum_{\vert \boldalpha \vert =0}^{\p}
\muperp_{\boldalpha} \mboldalphaperp
\qquad\qquad \text{for given } \muperp_{\boldalpha} \in \Rbb.
\]
Testing on both sides by~$\mboldbetaperp$
and using that the~$\mboldalphaperp$ are linear combinations
of scaled orthogonal polynomial fields, we deduce
\begin{equation} \label{relation-beta-interior}
\DoFbfdivboldbeta(\tauboldh)
= \frac{\hE}{\vert\E\vert} \int_\E \divbf \tauboldh \cdot \mboldbetaperp
= \sum_{\vert \boldalpha \vert = 1}^\p
\muperp_{\boldalpha} \frac{\hE}{\vert \E \vert} \int_\E \mboldalphaperp \cdot \mboldbetaperp.
\end{equation}
Collect the coefficients~$\muperp_{\boldalpha}$ in a vector~$\boldmu^\perp$.
Using the arguments for the 3D version of~\cite[Lemma~4.1]{Chen-Huang:2018},
there exists a positive constant~$c_{CH}^B$,
which only depends on~$\rho$ in Section~\ref{section:introduction}
and~$\p$ in~\eqref{local-functions-strong},
such that
\[
\Norm{\PiRMperp\divbf\tauboldh}_{0,\E}^2
\le c_{CH}^B \hE^{3} \Norm{\boldmu^\perp}_{\ell^2}^2
= c_{CH}^B \hE \Norm{ \hE \boldmu^\perp}_{\ell^2}^2.
\]
Further using~\eqref{relation-beta-interior}, we deduce~\eqref{equivalence-lower-bound-interior}.

Combining bounds~\eqref{equivalence-lower-bound-face} and~\eqref{equivalence-lower-bound-interior} yields the lower bound.

\paragraph*{The upper bound (part~1).}
We show, for all facets~$\F$ in~$\FcalE$, the existence
of a positive constant~$c_3$ such that
\begin{equation} \label{equivalence-upper-bound-face}
\hE^3 \Norm{\DoFbf^\F(\tauboldh)}_{\ell^2}^2
\le c_3 \hE \Norm{\tauboldh\nbfE}_{0,\F}^2
\qquad\qquad \forall \tauboldh \in \SigmaboldhE.
\end{equation}
Since~$\Norm{\mboldbetaF}_{L^{\infty}(\F)}=1$,
we have
\[
\DoFbf^\F_{\boldbeta}(\tauboldh)
=  \frac{1}{\vert\F\vert} \int_\F \tauboldh\nbfE \cdot \mboldbetaF
 \le \frac{1}{\vert \F \vert^\frac12} \Norm{\tauboldh\nbfE}_{0,\F}
 \le C_\rho \hE^{-1} \Norm{\tauboldh\nbfE}_{0,\F} ,
\]
for some positive constant $C_\rho$ only depending on~$\rho$
in Section~\ref{section:introduction}.

This gives
\[
\hE^3 \DoFbf^\F_{\boldbeta}(\tauboldh)^2
\le C_\rho^2 \hE \Norm{\tauboldh\nbfE}_{0,\F}^2.
\]
Summing over the correct multi-indices
gives~\eqref{equivalence-upper-bound-face}
for all facets~$\F$ of~$\E$.

\paragraph*{The upper bound (part~2).}
We prove the existence of a positive constant $c_4$ such that
\begin{equation} \label{equivalence-upper-bound-interior}
\hE^3 (\DoFbfdiv(\sigmaboldh), \DoFbfdiv(\tauboldh))_{\ell^2}
\le c_4 \hE^2 (\PiRMperp\divbf \sigmaboldh, \PiRMperp \divbf \tauboldh )_{0,\E}.
\end{equation}
Since~$\Norm{\mboldbetaperp}_{L^{\infty}(\E)}=1$, we have
\[
\DoFbfdivboldbeta(\tauboldh)
= \frac{\hE}{\vert \E \vert} \int_\E \divbf \tauboldh \mboldbetaperp
\le \frac{\hE}{\vert \E \vert^{\frac12}} \Norm{\PiRMperp \divbf\tauboldh}_{0,\E}
\le C_\rho \hE^{-\frac12} \Norm{\PiRMperp \divbf \tauboldh}_{0,\E},
\]
for some positive constant $C_\rho$ only depending on~$\rho$
in Section~\ref{section:introduction}.

This yields
\[
\hE^3 \DoFbfdivboldbeta(\tauboldh)^2
\le C_\rho^2 \hE^2 \Norm{\PiRMperp \divbf \tauboldh}_{0,\F}^2.
\]
Summing over the correct multi-indices
gives~\eqref{equivalence-upper-bound-interior}.
\end{proof}

%%%%%%

An immediate consequence of Propositions~\ref{proposition:stability-HR-general-order}
and~\ref{proposition:equivalence-two-stab} is the following result.
\begin{corollary} \label{corollary:stability-dofidofi-HR-general-order}
The bilinear form~$\StildeE(\cdot,\cdot)$ defined in~\eqref{stabilization:dofi-dofi}
satisfies the stability bounds~\eqref{stability-bounds}.
In fact, the result holds true also for tensors~$\tauboldh$
such that~$\PiTpE\tauboldh$ is different from the zero tensor.
\end{corollary}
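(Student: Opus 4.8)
The plan is to obtain the claim by composing the two two-sided bounds already at our disposal, since no new analytic ingredient is needed. Proposition~\ref{proposition:equivalence-two-stab} provides, for every $\tauboldh\in\SigmaboldhE$ and every $\E\in\taun$, the spectral equivalence $\beta_* \SE(\tauboldh,\tauboldh) \le \StildeE(\tauboldh,\tauboldh) \le \beta^* \SE(\tauboldh,\tauboldh)$, with $\beta_*,\beta^*$ depending only on $\rho$ and $\p$. Proposition~\ref{proposition:stability-HR-general-order} supplies the stability bounds~\eqref{stability-bounds} for $\SE(\cdot,\cdot)$, namely $\mu^{-1}_{|\E}\alpha_* \Norm{\tauboldh}_{0,\E}^2 \le \SE(\tauboldh,\tauboldh) \le \mu^{-1}_{|\E}\alpha^* \Norm{\tauboldh}_{0,\E}^2$.

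First I would chain the lower bounds: from $\StildeE(\tauboldh,\tauboldh) \ge \beta_* \SE(\tauboldh,\tauboldh)$ and $\SE(\tauboldh,\tauboldh) \ge \mu^{-1}_{|\E}\alpha_* \Norm{\tauboldh}_{0,\E}^2$ one gets $\StildeE(\tauboldh,\tauboldh) \ge \mu^{-1}_{|\E}\beta_*\alpha_* \Norm{\tauboldh}_{0,\E}^2$; symmetrically, chaining the two upper bounds yields $\StildeE(\tauboldh,\tauboldh) \le \mu^{-1}_{|\E}\beta^*\alpha^* \Norm{\tauboldh}_{0,\E}^2$. Hence $\StildeE(\cdot,\cdot)$ satisfies~\eqref{stability-bounds} with the pair $\beta_*\alpha_* \le \beta^*\alpha^*$, each depending only on $\rho$ and on $\p$. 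The point to verify is that the resulting constants remain independent of $\lambda$, $\mu$, and $\hE$: this holds because the factor $\mu^{-1}_{|\E}$ is common to both stabilizations and $\alpha_*,\alpha^*,\beta_*,\beta^*$ are all material- and $\hE$-independent by construction.

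For the concluding sentence of the statement, I would note that neither ingredient restricts $\tauboldh$ to $\ker(\PiTpE)$: Proposition~\ref{proposition:equivalence-two-stab} is established for all $\tauboldh\in\SigmaboldhE$, and Proposition~\ref{proposition:stability-HR-general-order} is explicitly asserted to extend to tensors with $\PiTpE\tauboldh$ nonzero. Consequently the chained bounds above hold verbatim for such tensors, giving the extended claim. I do not anticipate any genuine obstacle here; the only care required is to track that the constants stay uniform in the material parameters, which the preceding remark settles.
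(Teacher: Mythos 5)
Your proposal is correct and follows exactly the paper's route: the paper states the corollary as an immediate consequence of Propositions~\ref{proposition:stability-HR-general-order} and~\ref{proposition:equivalence-two-stab}, which is precisely the chaining of the two two-sided bounds you carry out, including the observation that the common factor $\mu^{-1}_{|\E}$ keeps the resulting constants independent of the material parameters and of $\hE$. Your justification of the final sentence (that both ingredients hold for all $\tauboldh\in\SigmaboldhE$, not only on $\ker(\PiTpE)$) is also the intended reading.
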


\begin{remark} \label{remark:stab-bounds-zero-divergence}
In the virtual element discretization of~\eqref{weak-formulation},
displacements are vector, piecewise polynomials of degree~$\p$;
see \cite{Artioli-DeMiranda-Lovadina-Patruno:2018, Visinoni:2024}.
In particular, $\div\SigmaboldhE$ coincides with that space
and therefore the discrete stress solution
is also the solution to the reduced problem
on the discrete kernel of the mixed method.
Based on this, the stability bounds~\eqref{stability-bounds}
should be valid for divergence free discrete tensors
in $\tauboldh$ in $\SigmaboldhE \cap \ker (\PiTpE)$.
Reduced versions of the bilinear forms $\SE(\cdot,\cdot)$
and $\StildeE(\cdot,\cdot)$ may be employed:
in the former case, the divergence term vanishes;
in the latter, only facet degrees of freedom are employed
(this is in fact the original stabilization
proposed in~\cite{Artioli-DeMiranda-Lovadina-Patruno:2018, Visinoni:2024}).
On the theoretical level, we have the following advantages:
no polynomial inverse inequalities are needed
in the proof of the lower bound
in Proposition~\ref{proposition:stability-HR-general-order};
in the proof of Proposition~\ref{proposition:equivalence-two-stab},
the ``parts~2'' can be skipped.
\end{remark}
%%

%%%%%%%%%%%%%
\section{Interpolation estimates} \label{section:interpolation}
%%%%%%%%%%%%%
We derive interpolation estimates for functions in HR virtual element spaces
based on the stability bounds derived in Proposition~\ref{proposition:stability-HR-general-order}.
Related but different interpolation results can be found in
\cite[Proposition~5.3]{Artioli-DeMiranda-Lovadina-Patruno:2017},
\cite[Proposition 4.2]{Artioli-DeMiranda-Lovadina-Patruno:2018},
\cite[Proposition~4.2]{Dassi-Lovadina-Visinoni:2020},
and \cite[Proposition 4.4]{Visinoni:2024}.

Given an element~$\E$ and a sufficiently smooth
(in the sense of Remark~\ref{remark:regularity-stress} below) stress~$\sigmabold$,
we consider the unique discrete stress~$\sigmaboldI$ in~$\Sigmaboldh$
sharing the degrees of freedom of~$\sigmabold$.
More precisely, we define~$\sigmaboldI$
as the only function in~$\Sigmaboldh$
satisfying
\begin{equation} \label{interpolant}
\begin{split}
\int_\E \divbf(\sigmabold-\sigmaboldI) \cdot \qbfpperp=0
    & \qquad\qquad \forall \qbfpperp \in \RMperp(\E),\ \forall \E \in \taun, \\
\int_\F (\sigmabold-\sigmaboldI) \nbfE \cdot \qbfpF = 0
    & \qquad\qquad \forall \qbfpF \in \Pbf_\p(\F),\ \forall \F \in \FcalE,\ \forall \E \in \taun.
\end{split}
\end{equation}

%%%%%
\begin{remark} \label{remark:regularity-stress}
We need sufficient regularity for the stress~$\sigmabold$
in order to define its interpolant~$\sigmaboldI$ in the sense of~\eqref{interpolant}.
For instance, if~$\sigmabold$ belongs to
$\HScal(\divbf,\Omega) \cap \Hbb^{\frac12+\varepsilon}(\Omega)$, $\varepsilon>0$,
$\divbf\sigmabold$ belongs to~$\Lbf^2(\Omega)$
and $\sigmabold \nbfE {}_{|\F}$ belongs to~$\Lbf^2(\F)$
for all facets~$\F$ in the mesh,
whence the integrals in~\eqref{interpolant} are well defined.
Lower regularity can be demanded following, e.g.,
\cite[eq. (2.5.1)]{Boffi-Brezzi-Fortin:2013}
or \cite[Sect. 17.2]{Ern-Guermond:2021}:
it suffices that~$\sigmabold$ belongs to the space of symmetric stresses
with $\sigmabold$ in $\Lbb^{s}(\Omega)$, $s>2$,
and~$\divbf \sigmabold$ in $\Lbf^q(\Omega)$, $q>6/5$.
In this case, the integrals in~\eqref{interpolant}
make sense up to face-to-cell liftings.
\end{remark}
%%%%%

For~$\PiRMperp$ and~$\PizF$ as in~\eqref{RM-perp-L2-projection}
and~\eqref{L2-projector-face}, and sufficiently smooth stresses,
definition~\eqref{interpolant} gives
\begin{equation} \label{implication-interpolation}
\PiRMperp \divbf \sigmaboldI = \PiRMperp \divbf \sigmabold,
\qquad\qquad
(\sigmaboldI\nbfE){}_{|\F}
= \PizF (\sigmabold\nbfE){}_{|\F}
\quad \forall \F \in \FcalE.
\end{equation}
We have the following commutative property.
\begin{lemma}  \label{lemma:commutative-property}
Given $\sigmabold$ satisfying
one of the two assumptions in Remark~\ref{remark:regularity-stress}
and~$\sigmaboldI$ its interpolant in~$\Sigmaboldh$ as in~\eqref{interpolant},
we have the following identity:
\begin{equation} \label{commutative-property-divergence}
(\divbf \sigmaboldI)_{|\E}
= \PizE (\divbf \sigmabold)_{|\E}
\qquad\qquad \forall \E \in \taun.
\end{equation}
\end{lemma}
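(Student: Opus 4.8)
The plan is to exploit that $\divbf \sigmaboldI$ is, by the very definition of the local space in \eqref{local-functions-strong}, already a polynomial field in $\Pbf_\p(\E)$. Hence, to establish \eqref{commutative-property-divergence} it suffices to show that $\divbf \sigmaboldI$ and $\PizE \divbf \sigmabold$ share all their $L^2(\E)$ moments against test polynomials of degree~$\p$. Since $\PizE$ is the $L^2(\E)$ projection onto $\Pbf_\p(\E)$, for every $\qbf \in \Pbf_\p(\E)$ one has $(\PizE \divbf \sigmabold, \qbf)_{0,\E} = (\divbf \sigmabold, \qbf)_{0,\E}$, so that the identity reduces to proving the orthogonality
\[
\int_\E \divbf(\sigmabold - \sigmaboldI) \cdot \qbf = 0 \qquad \forall \qbf \in \Pbf_\p(\E).
\]

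Next I would invoke the orthogonal decomposition \eqref{orthogonal-splitting} and write $\qbf = \rbf + \qbfpperp$, with $\rbf \in \RM(\E)$ and $\qbfpperp \in \RMperp_\p(\E)$, treating the two summands separately. The component along $\RMperp_\p(\E)$ is immediate, as the first line of the interpolation conditions \eqref{interpolant} states exactly that $\int_\E \divbf(\sigmabold - \sigmaboldI) \cdot \qbfpperp = 0$.

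The rigid-body component requires integration by parts (justified by the regularity assumed in Remark~\ref{remark:regularity-stress}). Using the symmetry of $\sigmabold - \sigmaboldI$ together with the vanishing of the symmetric gradient of a rigid body motion, $\nablaS \rbf = \mathbf 0$, the volume contribution drops and I am left with only a boundary term,
\[
\int_\E \divbf(\sigmabold - \sigmaboldI) \cdot \rbf
= \int_{\partial \E} (\sigmabold - \sigmaboldI) \nbfE \cdot \rbf
= \sum_{\F \in \FcalE} \int_\F (\sigmabold - \sigmaboldI) \nbfE \cdot \rbf_{|\F}.
\]
Because $\rbf = \boldalpha + \omegabold \wedge \xbf$ is affine, its restriction to each facet lies in $\Pbf_1(\F) \subseteq \Pbf_\p(\F)$, and the second line of \eqref{interpolant} annihilates every facet integral. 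Combining the two components yields the desired orthogonality, and hence \eqref{commutative-property-divergence}.

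The main (and essentially only) delicate point is the treatment of the rigid-body part of the test polynomial: unlike the $\RMperp_\p(\E)$ moments, which are furnished directly by the interior degrees of freedom, the $\RM(\E)$ moments of the divergence are not prescribed and must be recovered indirectly through integration by parts, leaning simultaneously on the vanishing symmetric gradient of rigid motions and on the affine (hence facet-polynomial) character of their traces. This is precisely where the interplay between the interior and facet conditions in \eqref{interpolant} becomes essential, and it implicitly uses $\p \ge 1$; the lowest-order case is covered by the modified construction alluded to earlier in the excerpt.
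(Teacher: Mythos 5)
Your proof is correct and follows essentially the same route as the paper's: both arguments reduce the claim to matching moments against $\Pbf_\p(\E)$ split as $\RM(\E)\oplus\RMperp_\p(\E)$, use the first condition in~\eqref{interpolant} for the $\RMperp_\p(\E)$ part, and handle the rigid-body part by integration by parts combined with $\nablaS\rbf=\zerobf$ and the facet condition in~\eqref{interpolant}. The only cosmetic difference is that you decompose the test polynomial while the paper decomposes $\divbf\sigmaboldI$ via the projectors $\PiRM$ and $\PiRMperp$; your closing remarks on regularity and on the implicit use of $\p\ge 1$ are accurate.
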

%%%%
\begin{proof}
The proof is based on the arguments
in Remark~\ref{remark:computability-divergence}.
The divergence of functions in~$\SigmaboldhE$ is a polynomial field.
Given~$\PiRM$ and~$\PiRMperp$ as in~\eqref{RM-L2-projection} and~\eqref{RM-perp-L2-projection},
we recall the orthogonal splitting
\begin{equation} \label{splitting-divergence}
(\divbf \sigmaboldI)_{|\E}
= \PiRM (\divbf \sigmaboldI)_{|\E}
    + \PiRMperp (\divbf \sigmaboldI)_{|\E} 
    \qquad\qquad \forall \E \in \taun.
\end{equation}
Using the first condition in~\eqref{interpolant}, we write
\small{\[
\int_\E \PiRMperp \divbf \sigmaboldI \cdot \qbfpperp
= \int_\E \divbf \sigmaboldI \cdot \qbfpperp
= \int_\E \divbf \sigmabold \cdot \qbfpperp
= \int_\E \PiRMperp \divbf \sigmabold \cdot \qbfpperp
\quad \forall \qbfpperp \in \RMperp(\E).
\]}\normalsize
This implies that
\[
\PiRMperp (\divbf \sigmaboldI)_{|\E}
= \PiRMperp (\divbf \sigmabold)_{|\E}
\qquad\qquad \forall \E \in \taun.
\]
On the other hand, integrating by parts,
using the second condition in~\eqref{interpolant},
and exploiting the fact that~$\nablaS(\qbfpRM)$ is zero
for all rigid body motions~$\qbfpRM$,
we also deduce
\footnote{The $L^2$ inner products on facets
should be replaced by suitable duality pairings
in case~$\sigmabold$ satisfies the second regularity
assumption in Remark~\ref{remark:regularity-stress}.}
\small{\[
\begin{split}
& \int_\E \PiRM \divbf \sigmaboldI \cdot \qbfpRM  
  = \int_\E \divbf \sigmaboldI \cdot \qbfpRM  
  = - \int_\E \sigmaboldI : \nablaS(\qbfpRM)
     + \sum_{\F \in \FcalE} \int_\F \sigmaboldI\nbfE \cdot \qbfpRM \\
& = - \int_\E \sigmabold : \nablaS(\qbfpRM)
     + \sum_{\F \in \FcalE} \int_\F \sigmabold\nbfE \cdot \qbfpRM
  = \int_\E \divbf \sigmabold \cdot \qbfpRM  
  = \int_\E \PiRM \divbf \sigmabold \cdot \qbfpRM .
\end{split}
\]}\normalsize
This implies that
\[
\PiRM (\divbf \sigmaboldI)_{|\E}
= \PiRM (\divbf \sigmabold)_{|\E}
\qquad\qquad\qquad\qquad \forall \E\in\taun.
\]
Inserting the displays above in~\eqref{splitting-divergence}
and recalling the orthogonal decomposition~\eqref{orthogonal-splitting},
we obtain
\[
(\divbf \sigmaboldI)_{|\E}
= \PiRM (\divbf \sigmabold)_{|\E} 
    + \PiRMperp (\divbf \sigmabold)_{|\E}
= \PizE (\divbf \sigmabold)_{|\E}
\qquad\qquad \forall \E \in \taun.
\]
\end{proof}
%%%%

Introduce the local spaces
\[
\SigmaboldtildeE
:= \{ \taubold \in \HScal(\divbf,\E) \mid 
            \exists\ \wbf \in \Hbf^1(\E) \text{ such that }
            \taubold = \Cbb \nablaS (\wbf) \}
\qquad \forall \E\in \taun.
\]
The following polynomial approximation result is valid;
see~\cite[Proposition~3.2]{Artioli-DeMiranda-Lovadina-Patruno:2018}
and \cite[Proposition~4.3]{Visinoni:2024}.

\begin{lemma} \label{lemma:approximation-PiT}
Let~$\sigmabold$ belong to $\SigmaboldtildeE \cap \Hbb^r(\E)$,
$r$ nonnegative, and $r$ be smaller than or equal to~$\p+1$.
Given~$\PiTpE$ as in~\eqref{definition:PiT},
the following bound holds true:
there exists a positive constant $C$ only depending on
the regularity parameter~$\rho$ in Section~\ref{section:introduction}
and~$\p$ in~\eqref{local-functions-strong}
such that   
\[
\Norm{\sigmabold - \PiTpE \sigmabold}_{0,\E}
\le C \hE^{r} \SemiNorm{\sigmabold}_{r,\E}.
\]
\end{lemma}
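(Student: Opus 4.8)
The plan is to exploit that, by~\eqref{definition:PiT}, $\PiTpE$ is the orthogonal projection onto~$\TpE$ with respect to the inner product $(\Dbb\,\cdot,\cdot)_{0,\E}$. Writing $\sigmabold=\Cbb\nablaS(\wbf)$ with $\wbf\in\Hbf^1(\E)$ and $\PiTpE\sigmabold=\Cbb\nablaS(\wbfstar)$ with $\wbfstar\in\Pbf_{\p+1}(\E)$, the equivalent form of~\eqref{definition:PiT} reads $(\Cbb\nablaS(\wbf-\wbfstar),\nablaS(\qbfppo))_{0,\E}=0$ for all $\qbfppo\in\Pbf_{\p+1}(\E)$, i.e.\ $\wbfstar$ is the elastic-energy projection of~$\wbf$ (well defined, modulo rigid body motions, by coercivity of the energy form via Korn's inequality). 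Hence $\PiTpE\sigmabold$ is the best approximation of~$\sigmabold$ in the $\Dbb$-weighted norm $\Norm{\cdot}_{\Dbb}^2:=(\Dbb\,\cdot,\cdot)_{0,\E}$, so that
\[
\Norm{\sigmabold-\PiTpE\sigmabold}_{\Dbb}
\le \Norm{\sigmabold-\Cbb\nablaS(\qbfppo)}_{\Dbb}
= \Norm{\Cbb\nablaS(\wbf-\qbfppo)}_{\Dbb}
\qquad\forall\,\qbfppo\in\Pbf_{\p+1}(\E).
\]

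Next I would split every symmetric tensor into deviatoric and spherical parts, $\taubold=\dev(\taubold)+\tfrac13\tr(\taubold)\Ibb$, and record that both $\Norm{\cdot}_{0,\E}$ and $\Norm{\cdot}_{\Dbb}$ diagonalise with respect to this decomposition. Using~\eqref{Lame}, for $\taubold=\Cbb\nablaS(\vbf)$ one has $\dev(\taubold)=2\mu\,\dev(\nablaS\vbf)$ and $\tr(\taubold)=(2\mu+3\lambda)\,\div\vbf$, so the weights $2\mu$ and $2\mu+3\lambda$ appear explicitly; the same weights multiply the deviatoric and volumetric contributions on both sides of the target inequality, since indeed $\SemiNorm{\sigmabold}_{r,\E}^2=(2\mu)^2\SemiNorm{\dev(\nablaS\wbf)}_{r,\E}^2+\tfrac13(2\mu+3\lambda)^2\SemiNorm{\div\wbf}_{r,\E}^2$. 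The point of this bookkeeping is that a constant independent of $\mu,\lambda$ can be hoped for only if the deviatoric and volumetric parts of the projection error are controlled \emph{separately}.

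I would then produce the comparison polynomial $\qbfppo$ by a Bramble--Hilbert / averaged Taylor argument on the ball of radius $\ge\rho\hE$ with respect to which~$\E$ is star-shaped, together with the usual scaling in~$\hE$. Since $\dev(\nablaS\cdot)$ and $\div(\cdot)$ are constant-coefficient first-order operators commuting with averaging and differentiation, this yields separately
\[
\Norm{\dev(\nablaS(\wbf-\qbfppo))}_{0,\E}\lesssim \hE^{r}\SemiNorm{\dev(\nablaS\wbf)}_{r,\E},
\qquad
\Norm{\div(\wbf-\qbfppo)}_{0,\E}\lesssim \hE^{r}\SemiNorm{\div\wbf}_{r,\E},
\]
with hidden constants depending only on~$\rho$ and~$\p$. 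Inserting these into the best-approximation bound and re-expressing $\Norm{\sigmabold-\PiTpE\sigmabold}_{0,\E}$ through the same deviatoric/spherical decomposition would then deliver the claim.

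The main obstacle is precisely the passage from the $\Dbb$-weighted best-approximation estimate to the plain $L^2$ bound in the near-incompressible regime $\lambda\gg\mu$: the energy norm weights the volumetric error by $2\mu+3\lambda$ but the deviatoric error only by $2\mu$, whereas $\Norm{\sigmabold-\PiTpE\sigmabold}_{0,\E}^2$ weights them by $(2\mu+3\lambda)^2$ and $(2\mu)^2$. A crude use of the equivalence $\Norm{\cdot}_{0,\E}\le\sqrt{2\mu+3\lambda}\,\Norm{\cdot}_{\Dbb}$ pollutes the deviatoric term by a factor $(2\mu+3\lambda)/(2\mu)$ and destroys uniformity. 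Thus the delicate point is to control the deviatoric part $\dev(\nablaS(\wbf-\wbfstar))$ of the energy-projection error independently of~$\lambda$; I expect this to require a local inf-sup (Stokes/Korn) stability of the polynomial pair $(\Pbf_{\p+1}(\E),\P_\p(\E))$ on the star-shaped element, ensuring that the divergence constraint does not degrade the deviatoric approximation as $\lambda\to\infty$. If only a material-dependent constant were sought, this step could be bypassed by the straightforward norm-equivalence argument.
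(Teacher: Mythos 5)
First, a note on the comparison itself: the paper does not prove Lemma~\ref{lemma:approximation-PiT} at all; it imports it from \cite[Proposition~3.2]{Artioli-DeMiranda-Lovadina-Patruno:2018} and \cite[Proposition~4.3]{Visinoni:2024}. So your attempt can only be judged on its own merits, and on its own merits it has a genuine gap. The preparatory part is sound: $\PiTpE$ is indeed the $(\Dbb\cdot,\cdot)_{0,\E}$-orthogonal projection onto $\TpE$; writing $\sigmabold=\Cbb\nablaS(\wbf)$, the error is $\Cbb\nablaS(\wbf-\wbfstar)$ with $\wbfstar$ the elastic-energy projection of $\wbf$ onto $\Pbf_{\p+1}(\E)$; the seminorm identity $\SemiNorm{\sigmabold}_{r,\E}^2=4\mu^2\SemiNorm{\dev(\nablaS\wbf)}_{r,\E}^2+3\kappa^2\SemiNorm{\div\wbf}_{r,\E}^2$ with $\kappa:=(2\mu+3\lambda)/3$ is correct; and the commuting averaged-Taylor argument giving \emph{separate} $\dev$/$\div$ approximation bounds with $(\rho,\p)$-constants is correct. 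But the proof stops exactly at the only point that carries the content of the lemma: the $\lambda$-uniform passage to the $L^2$ bound is left as a conjecture (``I expect this to require a local inf-sup\dots''). Since the statement asserts $C=C(\rho,\p)$, independent of the Lam\'e parameters, what your argument actually establishes is only the weaker bound with $C\sim(2\mu+3\lambda)/(2\mu)$, which degenerates in the incompressible limit.

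Moreover, your diagnosis of where the uniformity is lost is inverted, and following it would derail the completion of the proof. Combining energy best approximation with your commuting Taylor argument gives
\[
2\mu\Norm{\dev\nablaS(\wbf-\wbfstar)}_{0,\E}^2+\kappa\Norm{\div(\wbf-\wbfstar)}_{0,\E}^2
\le C\hE^{2r}\bigl(2\mu\SemiNorm{\dev(\nablaS\wbf)}_{r,\E}^2+\kappa\SemiNorm{\div\wbf}_{r,\E}^2\bigr).
\]
Multiplying by $2\mu$ and using $2\mu\kappa\le3\kappa^2$ bounds the correctly weighted deviatoric error $4\mu^2\Norm{\dev\nablaS(\wbf-\wbfstar)}_{0,\E}^2$ by $C\hE^{2r}\SemiNorm{\sigmabold}_{r,\E}^2$: the deviatoric part needs no new idea whatsoever. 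The term that escapes is the \emph{volumetric} one: multiplying instead by $3\kappa$ leaves $6\mu\kappa\,\SemiNorm{\dev(\nablaS\wbf)}_{r,\E}^2$ on the right-hand side, and $\mu\kappa$ can neither be absorbed into $\mu^2$ nor traded against $\kappa^2\SemiNorm{\div\wbf}_{r,\E}^2$ (a different seminorm) as $\lambda\to\infty$. So the quantity that genuinely requires an inf-sup argument is $\Norm{\tr(\sigmabold-\PiTpE\sigmabold)}_{0,\E}=3\kappa\Norm{\div(\wbf-\wbfstar)}_{0,\E}$, not the deviatoric part you single out.

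The missing step is the classical pressure-recovery argument, and it is the piece you would need to write out. Let $\Pi_\p^0$ be the $L^2(\E)$-projection onto $\P_\p(\E)$ and note $\div\Pbf_{\p+1}(\E)=\P_\p(\E)$, with a right inverse $\qbf$ of $\div$ satisfying $\Norm{\nablaS\qbf}_{0,\E}\le\beta^{-1}\Norm{\div\qbf}_{0,\E}$, $\beta=\beta(\rho,\p)>0$ (constructible monomial-by-monomial on scaled monomials, using a Chen--Huang type norm equivalence and a polynomial inverse inequality, i.e., tools already used elsewhere in the paper). Testing the Galerkin orthogonality $2\mu(\dev\nablaS(\wbf-\wbfstar),\dev\nablaS\qbf)_{0,\E}+\kappa(\div(\wbf-\wbfstar),\div\qbf)_{0,\E}=0$ with $\div\qbf=\Pi_\p^0\div(\wbf-\wbfstar)$ yields
\[
\kappa\Norm{\Pi_\p^0\div(\wbf-\wbfstar)}_{0,\E}\le 2\mu\beta^{-1}\Norm{\dev\nablaS(\wbf-\wbfstar)}_{0,\E},
\]
and since $\div\wbfstar\in\P_\p(\E)$, one concludes $\kappa\Norm{\div(\wbf-\wbfstar)}_{0,\E}\le\kappa\Norm{(I-\Pi_\p^0)\div\wbf}_{0,\E}+2\mu\beta^{-1}\Norm{\dev\nablaS(\wbf-\wbfstar)}_{0,\E}\lesssim\hE^r\SemiNorm{\sigmabold}_{r,\E}$, which closes the lemma with $C=C(\rho,\p)$. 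Without this step your proposal proves only the material-dependent version, which is strictly weaker than the stated result.
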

We are now in a position to prove interpolation estimates in HR virtual element spaces
in the $L^2$ norm and in the $L^2$ norm of the divergence.

\begin{theorem} \label{theorem:interpolation-HR}
If~$\sigmabold$ belongs to
$\SigmaboldtildeE \cap \Hbb^r(\E)$,
$r>1/2+\varepsilon$ for some~$\varepsilon>0$,
and~$r$ is smaller than or equal to~$\p+1$,
then there exists a positive constant~$c$
depending only on the shape of~$\E$
and~$\p$ in~\eqref{local-functions-strong} such that
\begin{equation} \label{interpolation-L2}
\Norm{\sigmabold - \sigmaboldI}_{0,\E}
\le c \hE^r \SemiNorm{\sigmabold}_{r,\E}
\qquad\qquad \forall \E \in \taun.
\end{equation}
If~$\sigmabold$ belongs to
$\SigmaboldtildeE \cap \Lbb^{s}(\Omega)$, $s>2$,
and~$\divbf \sigmabold$ in $\Hbf^t (\Omega)$,
$t\ge0$, and~$t$ is smaller than or equal to~$\p+1$,
then there exists a positive constant~$c$
depending only on the shape of~$\E$
and~$\p$ in~\eqref{local-functions-strong} such that
\begin{equation} \label{interpolation-L2-div}
\Norm{\div (\sigmabold-\sigmaboldI)}_{0,\E}
\le c \hE^t \SemiNorm{\divbf \sigmabold}_{t,\E}
\qquad\qquad \forall \E \in \taun.
\end{equation}
\end{theorem}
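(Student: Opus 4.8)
The plan is to prove the two bounds separately, obtaining the divergence estimate~\eqref{interpolation-L2-div} directly from the commuting property and the $L^2$ estimate~\eqref{interpolation-L2} from the lower stability bound. For~\eqref{interpolation-L2-div}, I would invoke Lemma~\ref{lemma:commutative-property}: since $(\divbf\sigmaboldI)_{|\E}=\PizE(\divbf\sigmabold)_{|\E}$, the error $\divbf(\sigmabold-\sigmaboldI)=(\Ibb-\PizE)\divbf\sigmabold$ is exactly the $L^2(\E)$-orthogonal polynomial approximation error of the datum $\divbf\sigmabold\in\Hbf^t(\E)$; a Deny--Lions/Bramble--Hilbert estimate for the projection onto $\Pbf_\p(\E)$ then gives $\Norm{\divbf(\sigmabold-\sigmaboldI)}_{0,\E}\lesssim\hE^t\SemiNorm{\divbf\sigmabold}_{t,\E}$ for $0\le t\le\p+1$, the regularity required in Remark~\ref{remark:regularity-stress} serving only to make $\sigmaboldI$, and hence the commuting identity, well defined.

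For~\eqref{interpolation-L2}, I would split
\[
\Norm{\sigmabold-\sigmaboldI}_{0,\E}\le\Norm{\sigmabold-\PiTpE\sigmabold}_{0,\E}+\Norm{\PiTpE\sigmabold-\sigmaboldI}_{0,\E},
\]
bound the first summand by Lemma~\ref{lemma:approximation-PiT}, and treat the second by stability. Since $\PiTpE\sigmabold\in\TpE\subset\SigmaboldhE$ by~\eqref{polynomials-in-space}, the difference lies in $\SigmaboldhE$, and the lower bound of Proposition~\ref{proposition:stability-HR-general-order} (which, as emphasized there, holds on all of $\SigmaboldhE$) yields
\[
\Norm{\PiTpE\sigmabold-\sigmaboldI}_{0,\E}\lesssim\hE^{\frac12}\Norm{(\PiTpE\sigmabold-\sigmaboldI)\nbfE}_{0,\partial\E}+\hE\Norm{\PiRMperp\divbf(\PiTpE\sigmabold-\sigmaboldI)}_{0,\E}.
\]
For the facet term, the identity $(\sigmaboldI\nbfE)_{|\F}=\PizF(\sigmabold\nbfE)_{|\F}$ in~\eqref{implication-interpolation}, together with the fact that $(\PiTpE\sigmabold)\nbfE{}_{|\F}$ is already a polynomial of degree~$\p$ and hence fixed by $\PizF$, gives $(\PiTpE\sigmabold-\sigmaboldI)\nbfE{}_{|\F}=\PizF\big((\PiTpE\sigmabold-\sigmabold)\nbfE\big)_{|\F}$, so that $\Norm{(\PiTpE\sigmabold-\sigmaboldI)\nbfE}_{0,\F}\le\Norm{(\PiTpE\sigmabold-\sigmabold)\nbfE}_{0,\F}$.

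For the divergence term, the other identity in~\eqref{implication-interpolation}, $\PiRMperp\divbf\sigmaboldI=\PiRMperp\divbf\sigmabold$, reduces it to $\PiRMperp\divbf(\PiTpE\sigmabold-\sigmabold)$. Testing against a normalised $\qbf\in\RMperp(\E)$ and integrating by parts, the interior contribution $(\PiTpE\sigmabold-\sigmabold,\nablaS\qbf)_{0,\E}$ vanishes by the definition~\eqref{definition:PiT} of $\PiTpE$ (using that both $\sigmabold$ and $\PiTpE\sigmabold$ are symmetric, so $\nabla\qbf$ may be replaced by $\nablaS\qbf$, and $\qbf\in\Pbf_\p\subset\Pbf_{\p+1}$), leaving only $\sum_{\F\in\FcalE}\int_\F\big((\PiTpE\sigmabold-\sigmabold)\nbfE\big)\cdot\qbf$. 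A polynomial inverse trace inequality $\Norm{\qbf}_{0,\F}\lesssim\hE^{-\frac12}\Norm{\qbf}_{0,\E}$ then gives $\Norm{\PiRMperp\divbf(\PiTpE\sigmabold-\sigmabold)}_{0,\E}\lesssim\hE^{-\frac12}\sum_{\F\in\FcalE}\Norm{(\PiTpE\sigmabold-\sigmabold)\nbfE}_{0,\F}$, so that both right-hand terms of the stability bound are again controlled by the facet quantities $\Norm{(\PiTpE\sigmabold-\sigmabold)\nbfE}_{0,\F}$.

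The main obstacle, and the only genuinely new estimate, is therefore the fractional facet trace bound
\[
\Norm{(\PiTpE\sigmabold-\sigmabold)\nbfE}_{0,\F}\lesssim\hE^{r-\frac12}\SemiNorm{\sigmabold}_{r,\E},\qquad r>\tfrac12 .
\]
I would derive it from a scaled fractional trace inequality $\Norm{v}_{0,\F}^2\lesssim\hE^{-1}\Norm{v}_{0,\E}^2+\hE^{2s-1}\SemiNorm{v}_{s,\E}^2$ with $s=\min(r,1)>\tfrac12$, applied to $v=\PiTpE\sigmabold-\sigmabold$, combined with the polynomial approximation estimates underlying Lemma~\ref{lemma:approximation-PiT}: the $L^2$ bound $\Norm{\PiTpE\sigmabold-\sigmabold}_{0,\E}\lesssim\hE^r\SemiNorm{\sigmabold}_{r,\E}$ and its companion seminorm bound $\SemiNorm{\PiTpE\sigmabold-\sigmabold}_{s,\E}\lesssim\hE^{r-s}\SemiNorm{\sigmabold}_{r,\E}$, both of which exploit that $\sigmabold\in\SigmaboldtildeE$ is the image $\Cbb\nablaS\wbf$ of a potential approximated by degree-$(\p+1)$ polynomials. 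The two contributions combine to $\hE^{2r-1}\SemiNorm{\sigmabold}_{r,\E}^2$, yielding the claimed power, and the hypothesis $r>\tfrac12$ is precisely what renders the facet traces, and the trace inequality itself, meaningful. Inserting this bound into the two displays above shows that the right-hand side of the stability estimate is $\lesssim\hE^r\SemiNorm{\sigmabold}_{r,\E}$, which together with Lemma~\ref{lemma:approximation-PiT} gives~\eqref{interpolation-L2}.
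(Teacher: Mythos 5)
Your proof is correct and shares the paper's skeleton: \eqref{interpolation-L2-div} from Lemma~\ref{lemma:commutative-property} plus Bramble--Hilbert, and \eqref{interpolation-L2} from the triangle inequality with $\PiTpE\sigmabold$, the lower stability bound of Proposition~\ref{proposition:stability-HR-general-order} (valid on all of $\SigmaboldhE$), and the identities \eqref{implication-interpolation}; indeed your $\PiTpE\sigmabold-\sigmaboldI$ is exactly the paper's $(\sigmabold-\qbbpT)_I$ with $\qbbpT=\PiTpE\sigmabold$, by the reproduction property \eqref{polynomials-in-space}. Where you genuinely depart is in estimating the two stabilization terms. The paper introduces an auxiliary Bramble--Hilbert tensor $\qbfunderp$, treats the facet term by a fractional trace inequality plus polynomial inverse estimates as in \eqref{Bramble-Hilbert:triangle}, and bounds the divergence term crudely by $\Norm{\divbf(\sigmabold-\qbbpT)}_{0,\E}$ followed by triangle and inverse inequalities; that route rests on the first-order bound $\Norm{\divbf(\sigmabold-\qbfunderp)}_{0,\E}\lesssim\hE^{r-1}\SemiNorm{\sigmabold}_{r,\E}$, transparent only for $r\ge1$. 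You instead dualize $\PiRMperp\divbf(\PiTpE\sigmabold-\sigmabold)$ against $\RMperp_\p(\E)$, integrate by parts, annihilate the volume term through the orthogonality \eqref{definition:PiT} (correctly using symmetry to pass from $\nabla\qbf$ to $\nablaS\qbf$, and $\Pbf_\p(\E)\subset\Pbf_{\p+1}(\E)$), and apply a polynomial inverse trace inequality, folding the divergence term into the same facet quantity $\Norm{(\sigmabold-\PiTpE\sigmabold)\nbfE}_{0,\F}$ as the first term; a single fractional trace estimate then closes the argument uniformly for all $r>\frac12$, which is cleaner and more robust at low regularity than the paper's treatment. The price is the companion bound $\SemiNorm{\sigmabold-\PiTpE\sigmabold}_{s,\E}\lesssim\hE^{r-s}\SemiNorm{\sigmabold}_{r,\E}$, $s=\min(r,1)$, which is not part of Lemma~\ref{lemma:approximation-PiT} as stated; this is not a gap, since it follows from that lemma together with a fractional Bramble--Hilbert estimate and the inverse inequality $\SemiNorm{\qbfunderp}_{s,\E}\lesssim\hE^{-s}\Norm{\qbfunderp}_{0,\E}$ on $\Pbb_\p(\E)$ --- the very tools used in \eqref{Bramble-Hilbert:triangle} --- but you should state and prove it as a short auxiliary lemma rather than attribute it to Lemma~\ref{lemma:approximation-PiT}.
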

\begin{proof}
Bound~\eqref{interpolation-L2-div} follows from~\eqref{commutative-property-divergence}
and polynomial approximation properties.
In particular, the constant~$c$ is that of a best polynomial approximation result.
\medskip 

As for bound~\eqref{interpolation-L2},
we first observe that~\eqref{polynomials-in-space} entails~$\qbbpT = (\qbbpT)_I$
for any given~$\qbbpT$ in~$\TpE$.
Next, we apply the triangle inequality
and the lower bound as
in Proposition~\ref{proposition:stability-HR-general-order}
to get, for all~$\qbbpT$ in~$\TpE$,
\small{\[
\begin{split}
\Norm{\sigmabold-\sigmaboldI}_{0,\E}
\le \Norm{\sigmabold-\qbbpT}_{0,\E}
    + \Norm{(\sigmabold-\qbbpT)_I}_{0,\E}
\le \Norm{\sigmabold-\qbbpT}_{0,\E}
    + \alpha_*^{-1} \mu_{|\E} \SE((\sigmabold-\qbbpT)_I,(\sigmabold-\qbbpT)_I).
\end{split}
\]}\normalsize
Since the first term on the right-hand side yields optimal error estimates,
see Lemma~\ref{lemma:approximation-PiT},
with constant that only depends on
the regularity parameter~$\rho$ in Section~\ref{section:introduction}
and~$\p$ in~\eqref{local-functions-strong},
we only focus on the second one.

Putting aside the parameter~$\alpha_*^{-1}$,
and recalling~\eqref{stabilization:general-order} and~\eqref{implication-interpolation},
we write
\small{\[
\begin{split}
\mu_{|\E}
\SE((\sigmabold-\qbbpT)_I, (\sigmabold-\qbbpT)_I) 
& = \hE \sum_{\F \in \FcalE} \Norm{(\sigmabold-\qbbpT)_I \nbfE}^2_{0,\F}
+ \hE^2 \Norm{\PiRMperp \divbf (\sigmabold-\qbbpT)_I}^2_{0,\E} \\
& = \hE \sum_{\F \in \FcalE} \Norm{\PizF((\sigmabold-\qbbpT)\nbfE)}^2_{0,\F}
+ \hE^2 \Norm{\PiRMperp \divbf (\sigmabold-\qbbpT)}^2_{0,\E}.
\end{split}
\]}\normalsize
As for the first term on the right-hand side,
for each facet~$\F$,
we use the stability of the~$L^2$ projector on facets,
the trace \cite[Theorem 3.10]{Ern-Guermond:2021}
and the Poincar\'e inequality \cite[Section~3.3]{Ern-Guermond:2021},
the triangle inequality,
a polynomial inverse inequality~\cite{Verfurth:2013},
and get, for a suitable polynomial approximant~$\qbfunderp$ of~$\sigmabold$,
e.g., the tensor Bramble-Hilbert polynomial,
\begin{equation} \label{Bramble-Hilbert:triangle}
\begin{split}
& \hE \Norm{\PizF((\sigmabold-\qbbpT)\nbfE)}^2_{0,\F}
\le \hE \Norm{\sigmabold-\qbbpT}^2_{0,\F}
\lesssim \hE^{1+2\varepsilon} \SemiNorm{\sigmabold-\qbbpT}_{\frac12+\varepsilon,\E}^2\\
& \lesssim \hE^{1+2\varepsilon} 
        \Big(\SemiNorm{\sigmabold-\qbfunderp}_{\frac12+\varepsilon,\E}^2
          + \SemiNorm{\qbfunderp-\qbbpT}_{\frac12+\varepsilon,\E}^2 \Big)
    \lesssim \hE^{1+2\varepsilon} \SemiNorm{\sigmabold-\qbfunderp}_{\frac12+\varepsilon,\E}^2
          + \Norm{\qbfunderp-\qbbpT}_{0,\E}^2 \\
& \lesssim \hE^{1+2\varepsilon} \SemiNorm{\sigmabold-\qbfunderp}_{\frac12+\varepsilon,\E}^2
            + \Norm{\sigmabold-\qbbpT}_{0,\E}^2.
\end{split}
\end{equation}
The hidden constants depend on~$\rho$ in Section~\ref{section:introduction}
and~$\p$ in~\eqref{local-functions-strong}.
The above inequality gives
\small{\[
\begin{split}
\SE((\sigmabold-\qbbpT)_I, (\sigmabold-\qbbpT)_I) 
& \lesssim \hE^{1+2\varepsilon} \SemiNorm{\sigmabold-\qbfunderp}_{\frac12+\varepsilon,\E}^2
            + \Norm{\sigmabold-\qbbpT}_{0,\E}^2
  +\hE^2 \Norm{\PiRMperp \divbf (\sigmabold-\qbbpT)}^2_{0,\E}\\
& \le \hE^{1+2\varepsilon} \SemiNorm{\sigmabold-\qbfunderp}_{\frac12+\varepsilon,\E}^2
            + \Norm{\sigmabold-\qbbpT}_{0,\E}^2
  +\hE^2 \Norm{\divbf (\sigmabold-\qbbpT)}^2_{0,\E}.
\end{split}
\]}\normalsize
The first two terms on the right-hand side
converge optimally due to polynomial approximation estimates
and Lemma~\ref{lemma:approximation-PiT};
thence, we focus on the second one.
Proceeding as in~\eqref{Bramble-Hilbert:triangle}
leads us to
\[
\begin{split}
\Norm{\divbf (\sigmabold-\qbbpT)}_{0,\E}
& \le \Norm{\divbf (\sigmabold-\qbfunderp)}_{0,\E}
     + \Norm{\divbf (\qbfunderp-\qbbpT)}_{0,\E} \\
& \lesssim \Norm{\divbf (\sigmabold-\qbfunderp)}_{0,\E}
     + \hE^{-1 }\Norm{\qbfunderp-\qbbpT}_{0,\E} \\
& \le \Norm{\divbf (\sigmabold-\qbfunderp)}_{0,\E}
     + \hE^{-1} \Norm{\sigmabold- \qbfunderp}_{0,\E} 
     + \hE^{-1} \Norm{\sigmabold-\qbbpT}_{0,\E} .
\end{split}
\]
Polynomial approximation estimates and Lemma~\ref{lemma:approximation-PiT} yield the assertion.
\end{proof}

\begin{remark} \label{remark:Hdiv-convergence}
Bound~\eqref{interpolation-L2-div}
can be weakened
by taking the $\Lbf^q(\E)$ norm of $\divbf(\sigmabold-\sigmaboldI)$
and the (Banach) Sobolev $\mathbf W^{t,q}(\E)$ seminorm
of $\divbf\sigmabold$,
$t>0$, $q>6/5$, $t$ smaller than or equal to~$\p+1$,
on the left- and right-hand sides, respectively,
in case~$\sigmabold$ satisfies the second regularity
assumption in Remark~\ref{remark:regularity-stress}.
\end{remark}

%%%%%%%%%%%%%%%%%%%%%%%%%%%%%%%%%%%%%%%%%%%%%%%%%%%%%%%%%%%%%%%%%%%%%%%%%%
\section{A numerical investigation on the stability constants} \label{section:numerics}
%%%%%%%%%%%%%%%%%%%%%%%%%%%%%%%%%%%%%%%%%%%%%%%%%%%%%%%%%%%%%%%%%%%%%%%%%%

In this section, we discuss a practical approximation
of the coercivity and continuity constants
of the discrete bilinear forms in~\eqref{discrete:bf},
see Section~\ref{subsection:computation-stab-constants},
and assess numerically their behaviour
on sequences of elements
with increasing aspect ratio,
see Section~\ref{subsection:stability-h},
and with increasing degree of accuracy on fixed elements,
see Section~\ref{subsection:stability-p}.
We shall also be focusing on 2D elements.
The implementation is based on the C++ library Vem++~\cite{Dassi:2023}.

%%%%%%%%%%
\subsection{Computation of the coercivity and continuity constants} \label{subsection:computation-stab-constants}
%%%%%%%%%%
Given an element $\E$
and the dimension~$N^{\E}_{dof}$
of the space~$\SigmaboldhE$,
let $\big\{ \underline{\varphibold}_i \big\}_{i=1}^{N^{\E}_{dof}}$
be the basis of~$\SigmaboldhE$
dual to the degrees of freedom in~\eqref{face-moments}
and~\eqref{interior-moments}.
Define the symmetric matrices~$\Abf$ and~$\Bbf$
\begin{equation} \label{Abf-Bbf}
\Abf_{i,j} := \ahE(\underline{\varphibold}_j,\underline{\varphibold}_i),
\qquad\quad 
\Bbf_{i,j} := (\Dbb \underline{\varphibold}_j,\underline{\varphibold}_i)_{0,\E} 
\qquad\qquad \forall i,j = 1,\dots, N^{\E}_{dof}.
\end{equation}
In this section, we discuss a practical approximation
of the coercivity and continuity constants
of the discrete bilinear forms
$\ahE(\cdot, \cdot)$ in~\eqref{discrete:bf}. 
It can be readily seen that
these constants are the minimum and maximum
eigenvalues of the following
generalized eigenvalue problem:
find the eigenpair $(\gimel,\vbf)$ satisfying
\begin{equation} \label{generalizedEigenvalueProblem}
	\Abf \vbf = \gimel \Bbf \vbf.
\end{equation}
We recall that the bilinear forms~$\ahE(\cdot,\cdot)$
are computable via the degrees of freedom:
it suffices to compute the projector~$\PiTpE$
in~\eqref{definition:PiT}
and the stabilization.
Therefore, the computation of the matrix~$\Abf$
in~\eqref{Abf-Bbf} is rather simple.

Instead, the computation of the matrix~$\Bbf$
is less immediate: the virtual element basis tensors
are not available in closed form;
as such, they need to be approximated.
To this aim, we proceed in several steps.

\paragraph*{Step~1.}
We partition the set of local basis tensors
into basis tensors dual to facet~\eqref{face-moments}
and bulk~\eqref{interior-moments} moments:
\[
\left\{  \underline{\varphibold}^B \right\},
\qquad\qquad\qquad\qquad 
\left\{  \underline{\varphibold}^\perp \right\}.
\]
According to~\eqref{local-functions-strong},
a basis function $\underline{\varphibold}^B$
dual to~\eqref{face-moments}
and associated with a facet~$\F$ satisfies
\begin{equation} \label{face-basis-functions}
    \divbf \underline{\varphibold}^B =  \rbf \in \RM(\E) ,
    \qquad\qquad 
    \underline{\varphibold}^B \nbfE{}_{|\F} = 
    \qbf_{\p}^{\F} \in \Pbf_\p(\F),
    \qquad\qquad
    \underline{\varphibold}^B \nbfE{}_{|\partial\E\setminus\F} = 
    \zerobf  ;
\end{equation}
a bulk basis function $\underline{\varphibold}^\E$
dual to~\eqref{interior-moments} satisfies
\begin{equation} \label{bulk-basis-functions}
    \divbf \underline{\varphibold}^\perp =  \rbf^{\perp}_{\p} \in \RM^{\perp}(\E),
    \qquad\qquad\qquad\qquad
    \underline{\varphibold}^B \ \nbfE{}_{|\partial\E} = \zerobf.
\end{equation}
In~\eqref{face-basis-functions}, $\rbf$ is constructed
using only~$\qbf_{\p}^{\F}$;
see also Remark~\ref{remark:computability-divergence}.
In particular, compatibility conditions
are valid through the divergence theorem.

\paragraph*{Step~2.}
We define displacements
$\left\{\zbf_i\right\}_{i=1}^{N^{\E}_{dof}}$
such that we have
\begin{equation} \label{matrix-Bbf}
(\Dbb \underline{\varphibold}_j,\underline{\varphibold}_i)_{0,\E} 
= (\Cbb \nablaS \zbf_j,\nablaS \zbf_i)_{0,\E}
\qquad\qquad\qquad
\forall i,j=1,\dots,N^\E_{dof} . 
\end{equation} 
A concrete realization of such displacements
is given by the solutions
to ``face--type''
\begin{equation} \label{mixed-face-basis}
\begin{cases}
-\divbf (\Cbb \nablaS (\zbf_i^B)) 
\overset{\eqref{face-basis-functions}}{=} -\rbf_i   & \text{in } \E\\
\Cbb\nablaS(\zbf_i^B) \nbf
\overset{\eqref{face-basis-functions}}{=}\qbf_{\p,i}^{\F}    & \text{on } \F\\
\Cbb\nablaS(\zbf_i^B) \nbf
\overset{\eqref{face-basis-functions}}{=} \mathbf 0                & \text{on } \partial \E \setminus \F\\
\end{cases}
\end{equation}
with $\rbf_i$ satisfying a compatibility condition with
the Neumann boundary condition, and ``bulk--type''
\begin{equation} \label{mixed-bulk-basis}
\begin{cases}
-\divbf (\Cbb\nablaS (\zbf_i^\perp))
\overset{\eqref{bulk-basis-functions}}{=}
        -\rbf^{\perp}_{\p,i}               & \text{in } \E\\
   \Cbb\nablaS(\zbf_i^\perp) \nbf
   \overset{\eqref{bulk-basis-functions}}{=} \mathbf 0                   & \text{on } \partial \E,\\
\end{cases}
\end{equation}
elasticity problems.
The rigid body motion components of the solutions to
problems~\eqref{mixed-face-basis} and~\eqref{mixed-bulk-basis}
are fixed, e.g., by the conditions
\[
    \int_{\partial K} \zbf_i \cdot \rbf =\int_{\partial K}  \mathbf u\cdot \rbf
    \qquad\qquad\qquad\qquad
    \forall \rbf \in \RM(\E),
\]
where~$\ubf$ can be chosen arbitrarily.
In the following tests,
we pick $\ubf$ as an element in $\Pbf_\p(\E)$
such that its moments with respect to the elements
of the basis of scaled monomials with maximum
order~$\p$ are equal to~$1$.

\paragraph*{Step~3.}
We approximate the displacements,
and thence the matrix~$\Bbf$,
using a virtual element discretization
as in~\cite{BeiraoDaVeiga-Brezzi-Marini:2013}
on the element~$\E$.

%%%%%%%%%
\subsection{Stability constants on sequences of badly-shaped elements} \label{subsection:stability-h}
%%%%%%%%%
We assess numerically the behaviour of the
minimum (nonzero) and maximum eigenvalues
for the generalized eigenvalue
problem~\eqref{generalizedEigenvalueProblem}
for a fixed degree of accuracy
on sequences of elements with increasing aspect ratio
in two and three dimensions.

We consider both a compressible
($\lambda=\mu=1$)
and an incompressible materials
($\lambda=10^5$, $\mu=1$).
For the computation of the matrix~$\Bbf$
in~\eqref{matrix-Bbf}
we employ the VE scheme in~\cite{BeiraoDaVeiga-Brezzi-Marini:2013};
as for the latter case, we further
adopt a sub-integration of the divergence term,
which leads to a locking free scheme.

\paragraph*{First test case.}
We consider a sequence of elements
as in Figure~\ref{fig:hourglass2D}.
The initial element is a nonconvex hexagon
with hourglass shape; the distance between
the two re-entrant vertices is~$0.5$,
while that of the other couples of vertices is~$1$.
The other elements are constructed by halving
the distance between the two re-entrant
corners at each step.
We consider compressible materials.
\begin{figure}[ht]
	\centering
	\includegraphics[width=\sizeMesh\textwidth]{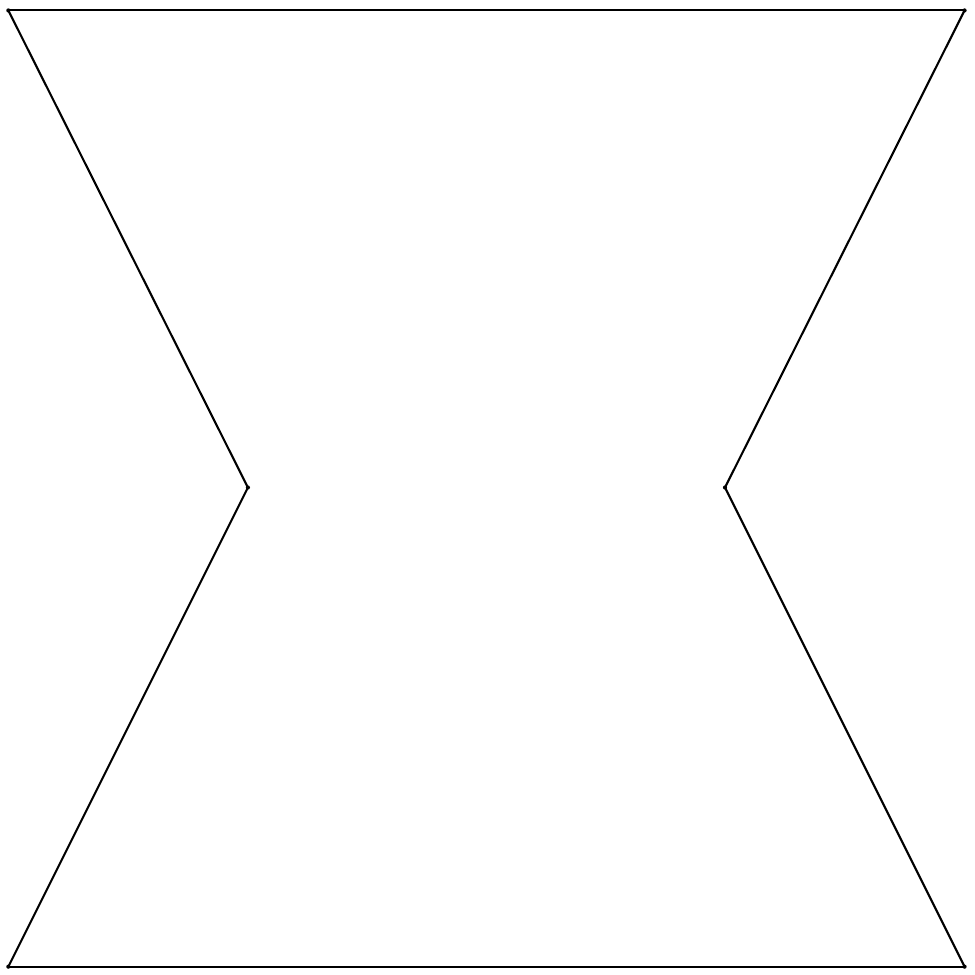}
	\includegraphics[width=\sizeMesh\textwidth]{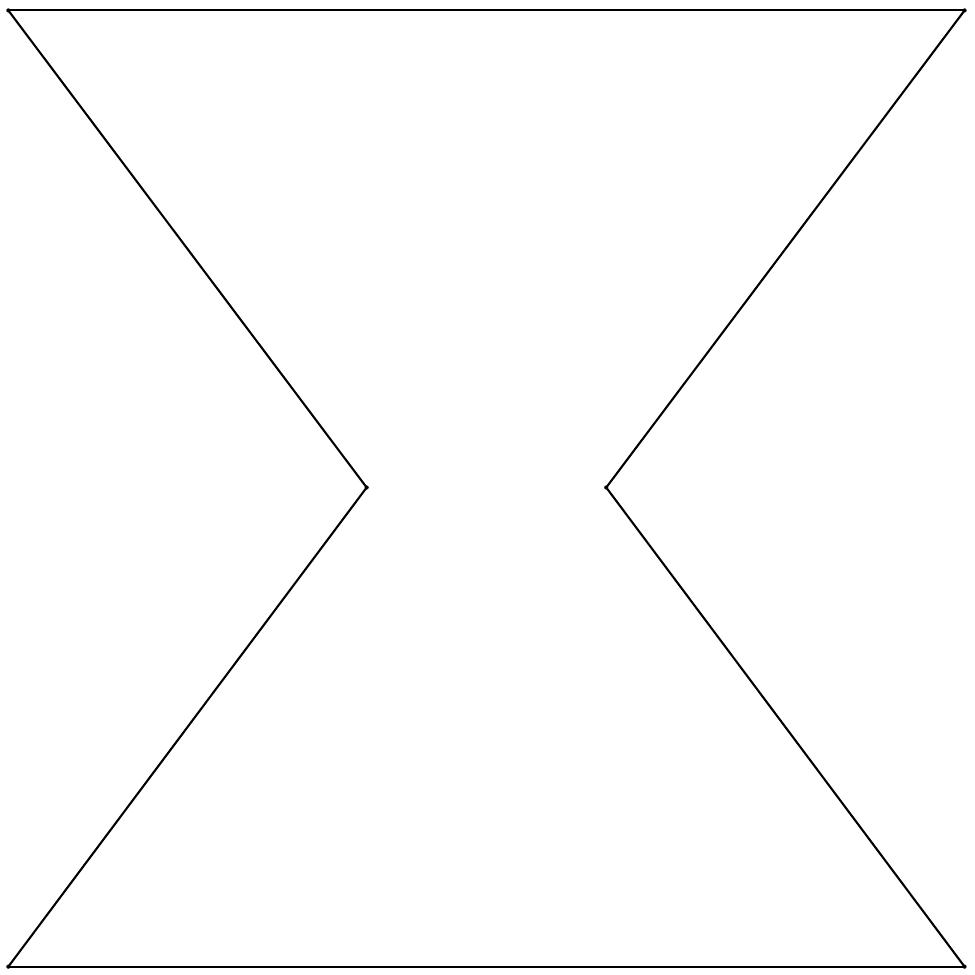}
	\includegraphics[width=\sizeMesh\textwidth]{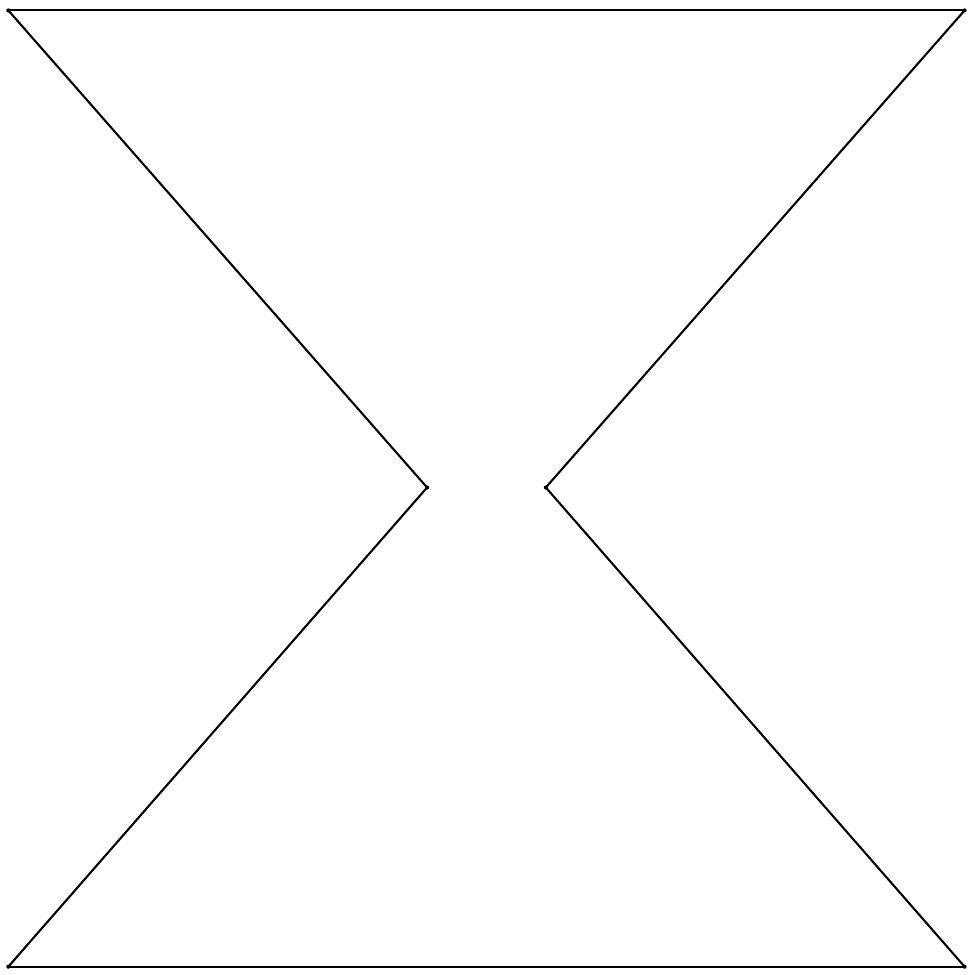}
	\includegraphics[width=\sizeMesh\textwidth]{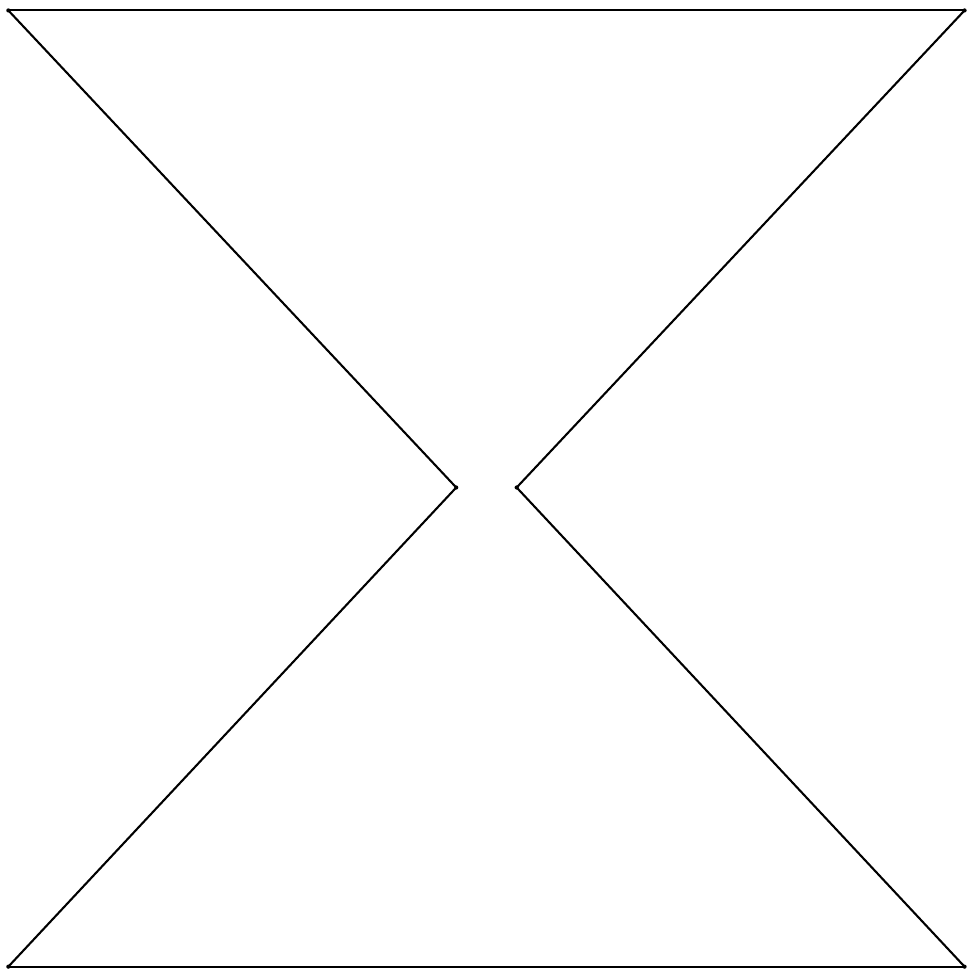}
	\includegraphics[width=\sizeMesh\textwidth]{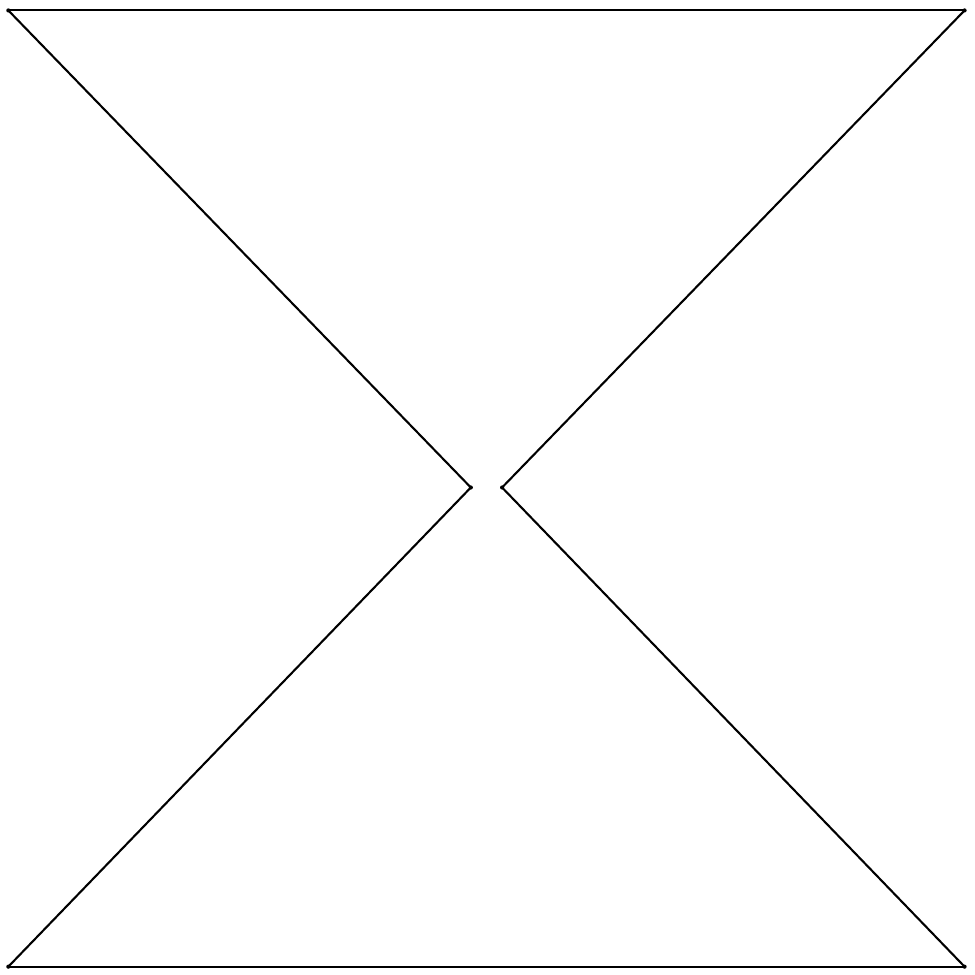}
	\includegraphics[width=\sizeMesh\textwidth]{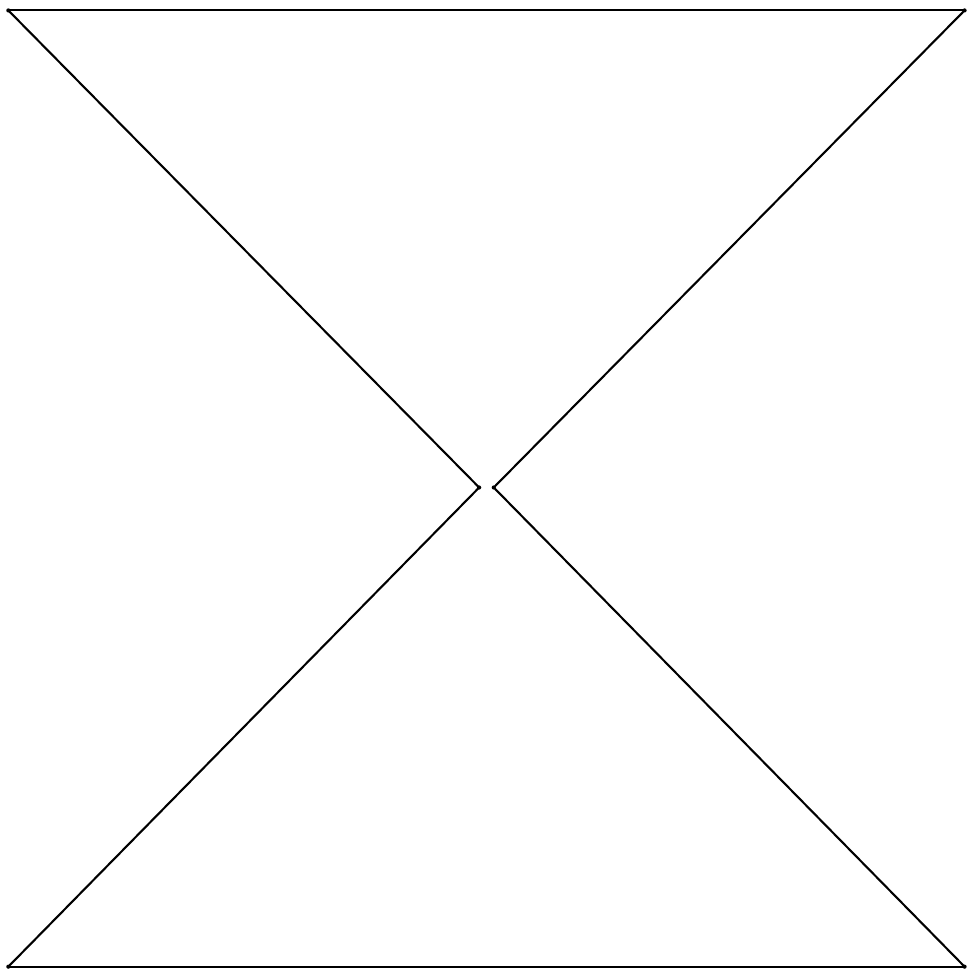}
	\caption{A sequence of 2D hourglass shaped elements
    for a compressible material.}
	\label{fig:hourglass2D}
\end{figure}

%%%
\paragraph*{Second test case.}
We consider a sequence of elements
as in Figure~\ref{fig:hourglass3D}.
The initial element is a nonconvex decahedron
with hourglass shape; the area of the upper and lower
facets is~$1$, while that of the minimal square section
is~$1/4$.
The other elements are constructed by halving
the size of the edges of the minimal square section.
We consider compressible materials.
\begin{figure}[ht]
	\centering
	\includegraphics[width=\sizeMesh\textwidth]{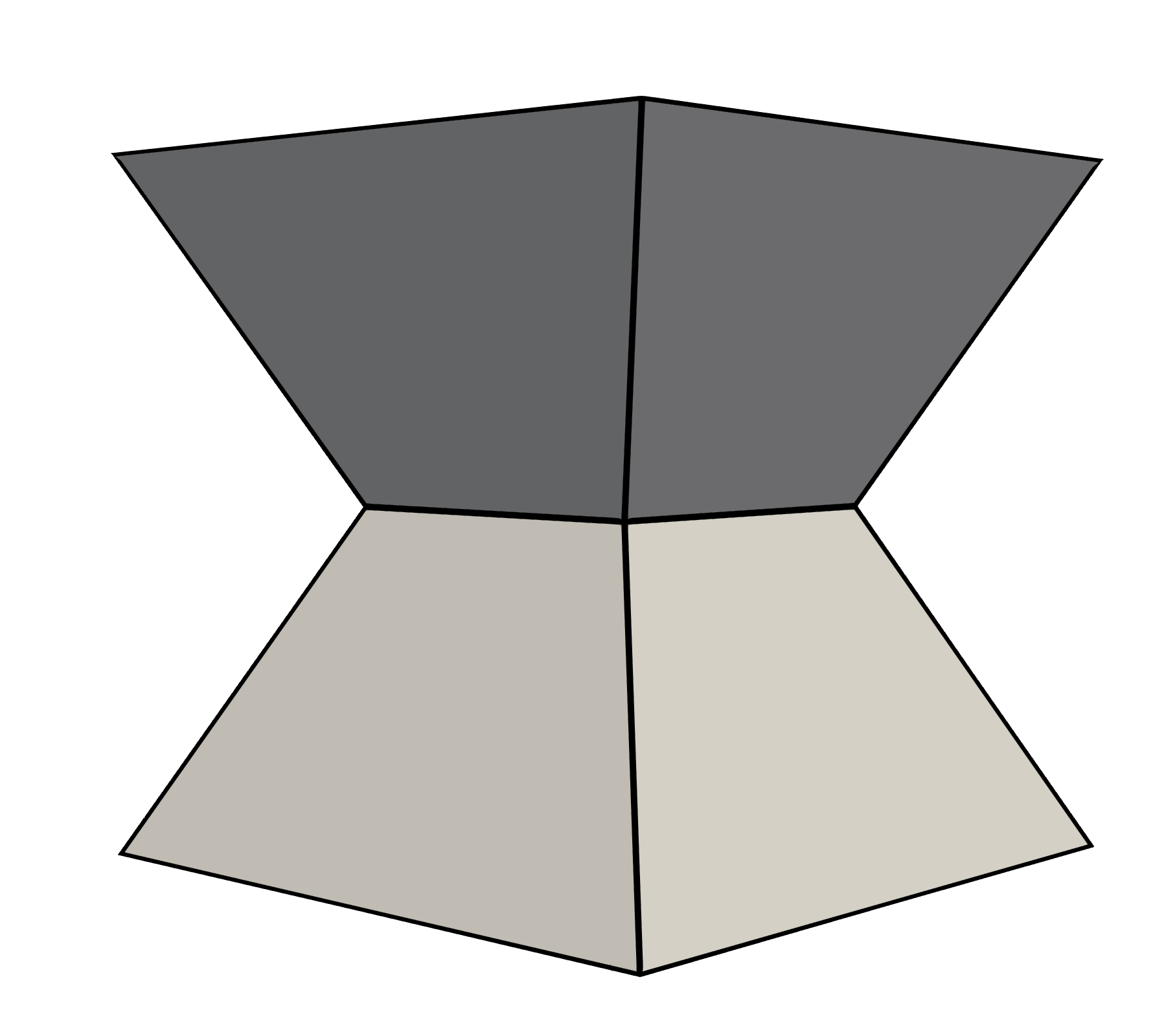}
	\includegraphics[width=\sizeMesh\textwidth]{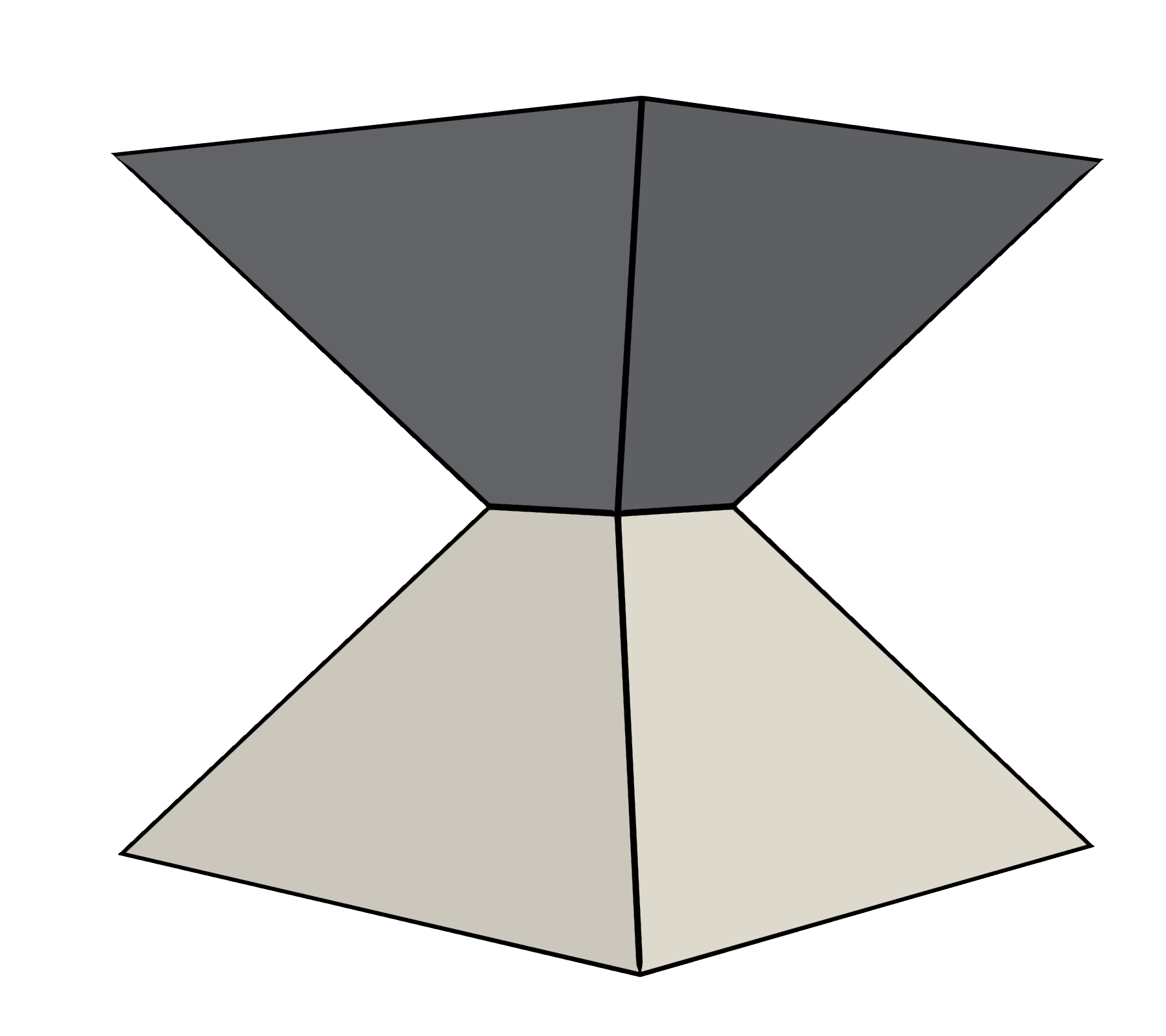}
	\includegraphics[width=\sizeMesh\textwidth]{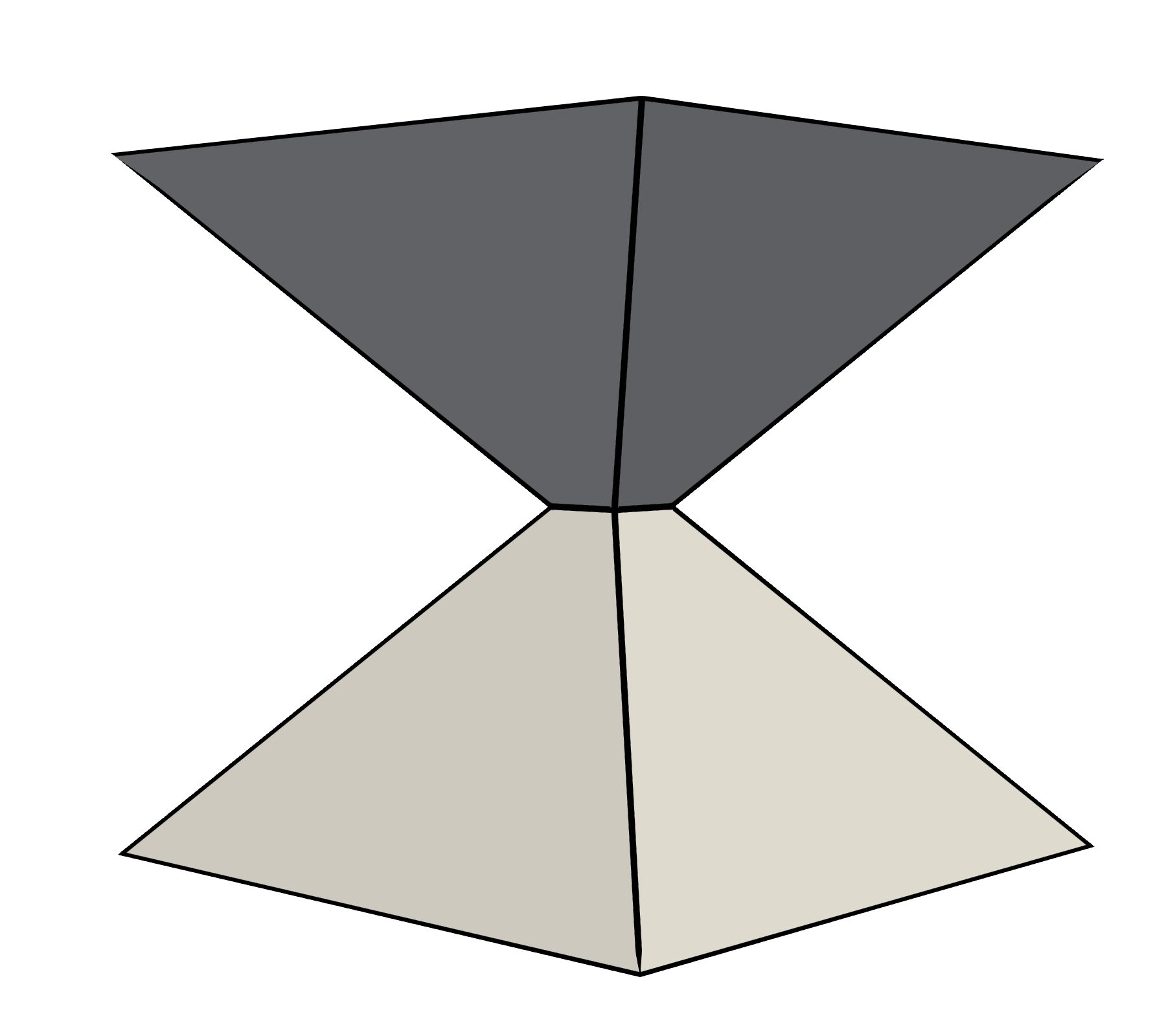}
	\includegraphics[width=\sizeMesh\textwidth]{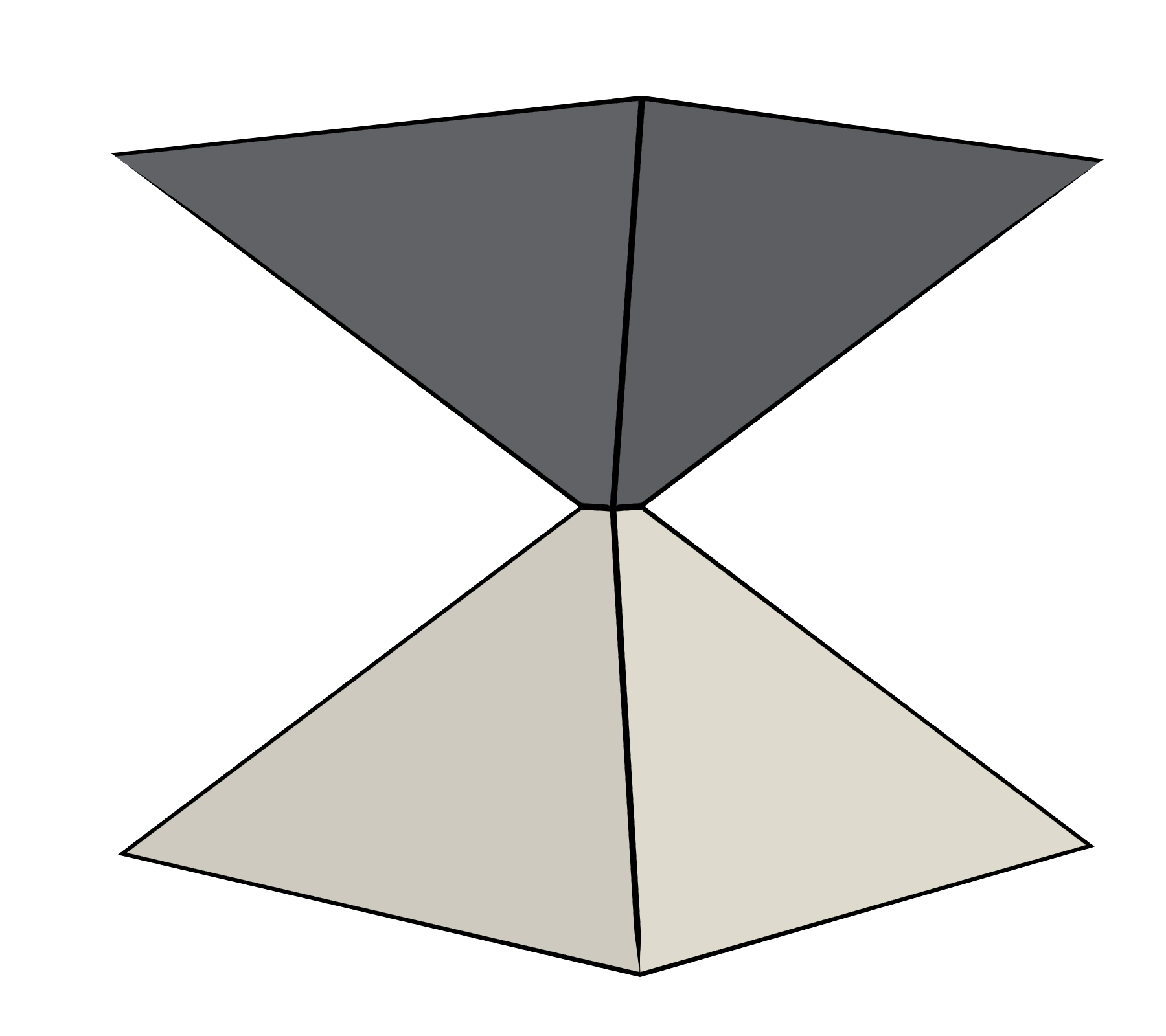}
	\includegraphics[width=\sizeMesh\textwidth]{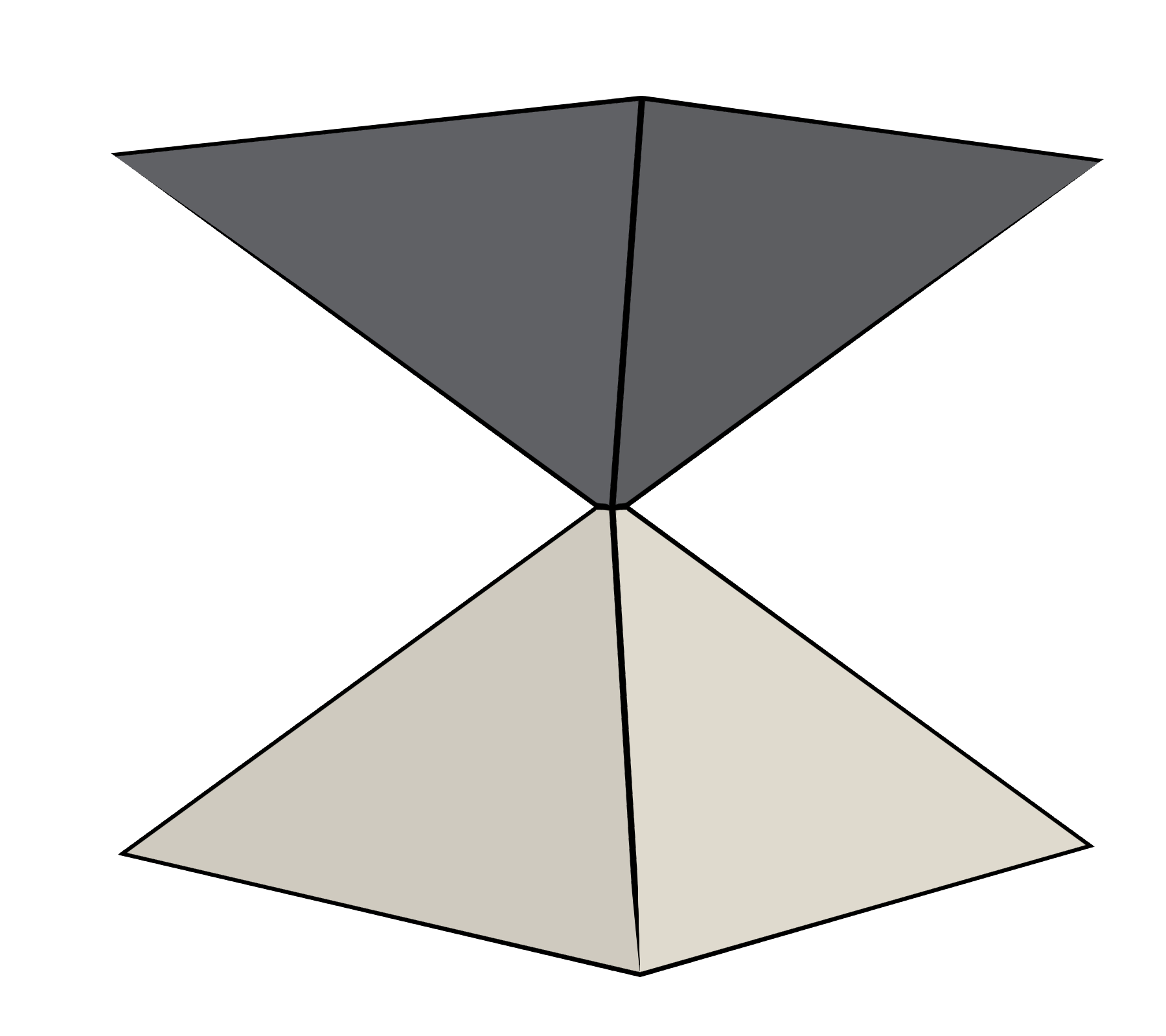}
	\includegraphics[width=\sizeMesh\textwidth]{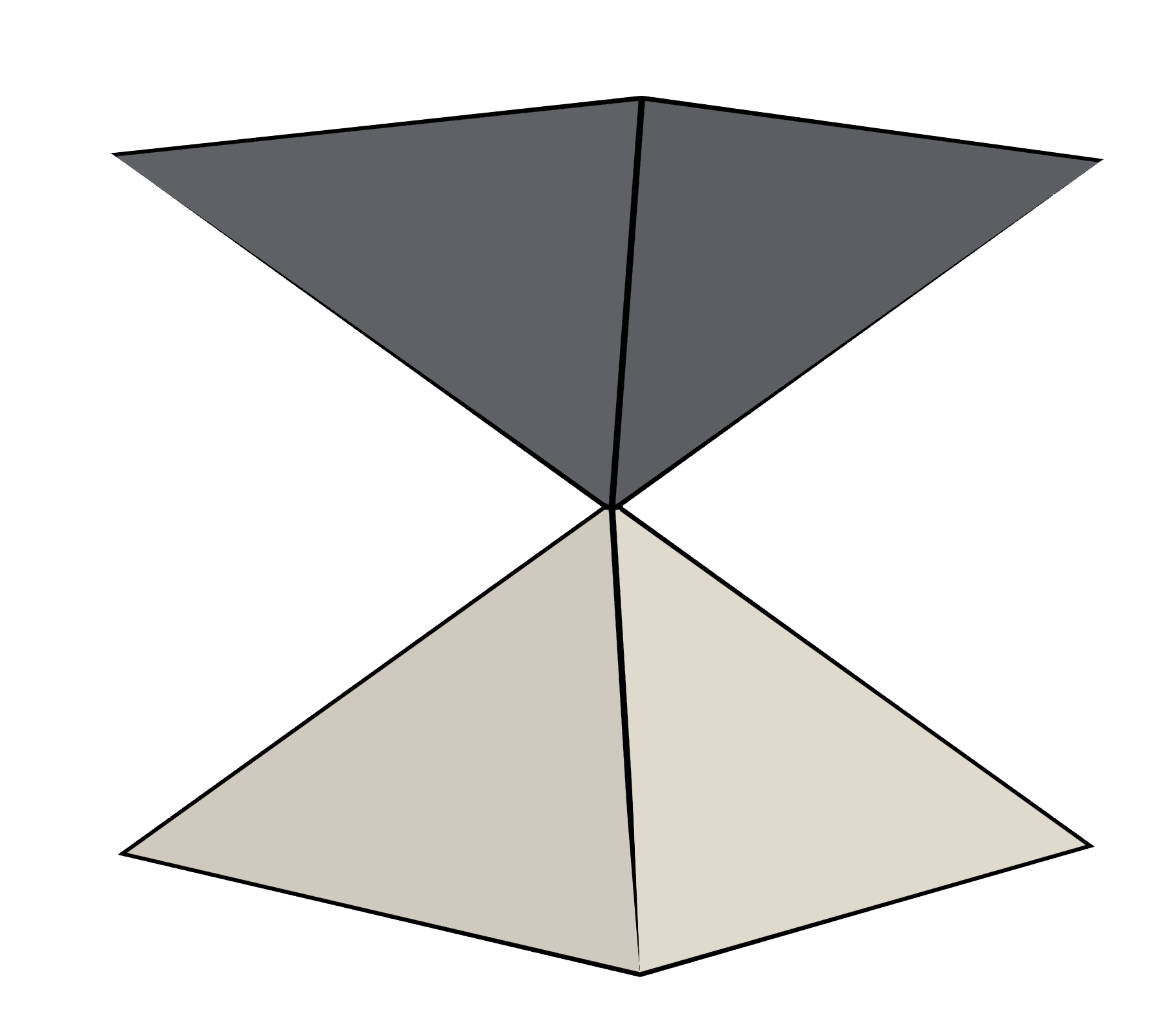}
	\caption{A sequence of 3D hourglass shaped elements
    for a compressible material.}
	\label{fig:hourglass3D}
\end{figure}

\paragraph*{Third test case.}
We consider a sequence of elements
as in Figure~\ref{fig:compressedTrapezoid}.
The initial element is an isosceles trapezoid,
where the length of the top edge is~$1/2$
and that of the bottom edge is~$1$.
The other elements are obtained by halving
the distance between the bottom and top edges.
We consider incompressible materials.
\begin{figure}[ht]
	\centering
	\includegraphics[width=\sizeMesh\textwidth]{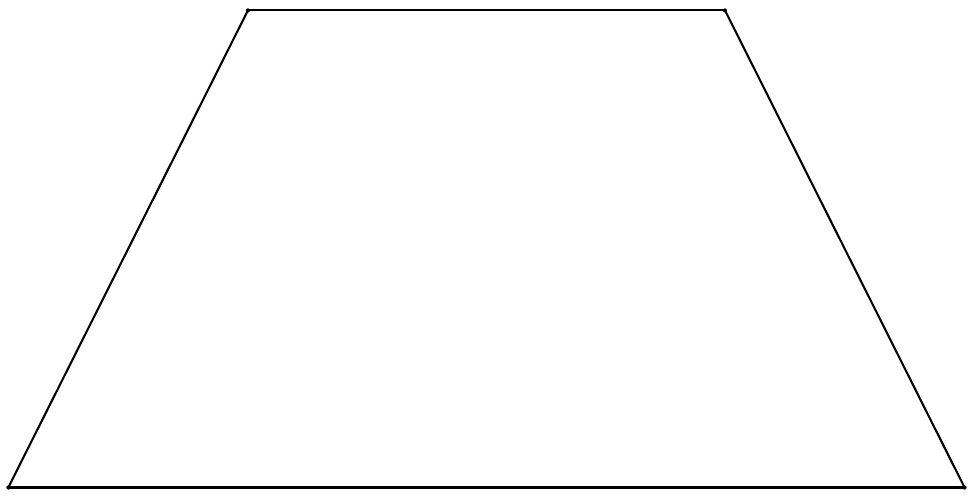}
	\includegraphics[width=\sizeMesh\textwidth]{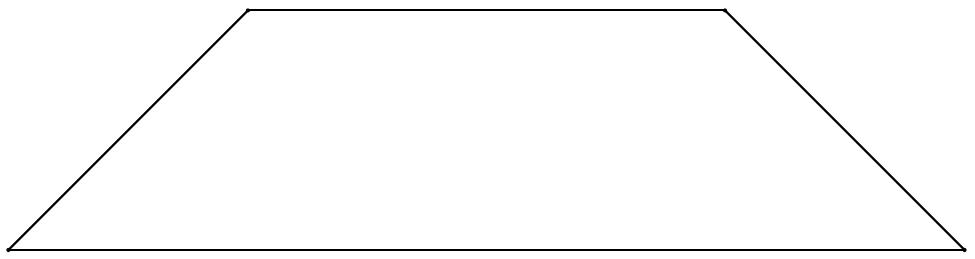}
	\includegraphics[width=\sizeMesh\textwidth]{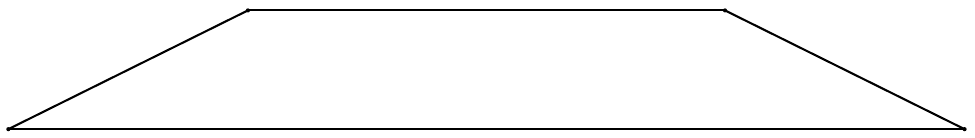}
	\includegraphics[width=\sizeMesh\textwidth]{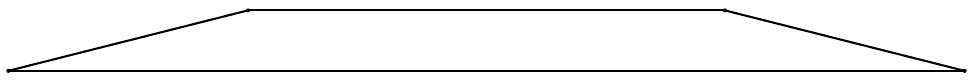}
	\includegraphics[width=\sizeMesh\textwidth]{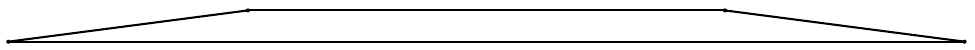}
	\includegraphics[width=\sizeMesh\textwidth]{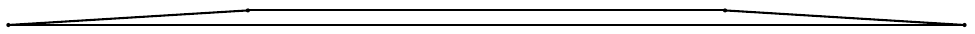}    
\caption{A sequence of 2D trapezoidal elements
    for an incompressible material.}
\label{fig:compressedTrapezoid}
\end{figure}

%%%
\paragraph*{Numerical results: minimum and maximum generalized eigenvalues.}
%%%
In Table~\ref{table:eigenvalues_badlyShapedElements},
we display the minimum (nonzero) and maximum generalized eigenvalues of~\eqref{generalizedEigenvalueProblem}.
For the 2D and 3D sequences of
hourglass shaped elements
in Figures~\ref{fig:hourglass2D}and~\ref{fig:hourglass3D},
we consider degree of accuracy $p=1$;
for the 2D sequences of trapezoidal elements
in Figure~\ref{fig:compressedTrapezoid}, we set $p=2$.
We provide results for the projection-based \eqref{stabilization:general-order}
and the dofi-dofi~\eqref{stabilization:dofi-dofi}
stabilizations $\SE(\cdot,\cdot)$ and $\StildeE(\cdot,\cdot)$;
in what follows, for the latter stabilization,
we denote the matrix~$\Abf$ by~$\widetilde{\Abf}$.

\begin{table}[!ht] 
\begin{center}
\begin{tabular}{ccc|cc}
\multicolumn{1}{c}{} &
\multicolumn{2}{c}{$\SE(\cdot,\cdot)$} &
\multicolumn{2}{c}{$\StildeE(\cdot,\cdot)$}\\
\hline
\multicolumn{1}{c}{$p=1$} &\multicolumn{1}{c}{$\gimel_{min}$} & \multicolumn{1}{c}{$\gimel_{max}$} &\multicolumn{1}{c}{$\gimel_{min}$} & \multicolumn{1}{c}{$\gimel_{max}$}\\
\hline
\multicolumn{1}{c|}{Fig.~\ref{fig:hourglass2D}} &{1.0000e+00} &{2.7301e+02} &{3.4204e-01} &{2.7849e+02} \\
\multicolumn{1}{c|}{} &{1.0000e+00} &{2.3326e+02} &{2.2719e-01}&{2.3492e+02}\\
\multicolumn{1}{c|}{} &{6.4300e-01} &{2.0826e+02} &{1.7951e-01}&{2.0788e+02}\\
\multicolumn{1}{c|}{} &{2.0247e-01} &{1.9905e+02} &{1.4982e-01} &{1.9722e+02}\\
\multicolumn{1}{c|}{} &{7.8824e-02} &{1.9528e+02} &{6.7338e-02} &{1.9277e+02} \\
\multicolumn{1}{c|}{} &{2.7496e-02} &{1.9338e+02}  &{2.3740e-02} &{1.9065e+02} \\
\hline
\multicolumn{1}{c}{$p=1$} &\multicolumn{1}{c}{$\gimel_{min}$} & \multicolumn{1}{c}{$\gimel_{max}$} &\multicolumn{1}{c}{$\gimel_{min}$} & \multicolumn{1}{c}{$\gimel_{max}$}\\
\hline
\multicolumn{1}{c|}{Fig.~\ref{fig:hourglass3D}}&{1.2260e-01} &{7.5879e+04}  &{1.2497e-01} &{5.6961e+04} \\
\multicolumn{1}{c|}{} &{1.3380e-02} &{8.8178e+04}  &{1.3875e-02} &{8.4101e+04} \\
\multicolumn{1}{c|}{} &{1.9540e-03} &{9.6572e+04} &{2.0313e-03} &{9.9241e+04} \\
\multicolumn{1}{c|}{} &{3.6127e-04} &{1.0486e+05}  &{3.7564e-04} &{1.1105e+05} \\
\multicolumn{1}{c|}{} &{8.1663e-05} &{1.1083e+05}  &{8.4897e-05} &{1.1890e+05} \\
\multicolumn{1}{c|}{} &{1.5835e-05} &{1.1373e+05}  &{1.6496e-05} &{1.2314e+05} \\
\hline
\multicolumn{1}{c}{$p=2$} &\multicolumn{1}{c}{$\gimel_{min}$} & \multicolumn{1}{c}{$\gimel_{max}$} &\multicolumn{1}{c}{$\gimel_{min}$} & \multicolumn{1}{c}{$\gimel_{max}$}\\
\hline
\multicolumn{1}{c|}{Fig.~\ref{fig:compressedTrapezoid}} &{1.9996e-05} &{6.5936e+02}  &{1.9996e-05} &{6.7161e+02} \\
\multicolumn{1}{c|}{} &{1.9980e-05} &{1.0625e+03}  &{1.9980e-05} &{1.0757e+03} \\
\multicolumn{1}{c|}{} &{1.9758e-05} &{1.4710e+03}  &{1.9758e-05} &{1.4801e+03} \\
\multicolumn{1}{c|}{} &{1.1262e-05} &{3.1641e+03}  &{1.1262e-05} &{3.1704e+03} \\
\multicolumn{1}{c|}{} &{2.0184e-10} &{2.3531e+03}  &{2.0229e-10} &{2.3541e+03} \\
\multicolumn{1}{c|}{} &{1.5359e-10} &{2.7322e+03}  &{1.4313e-10} &{2.7215e+03} \\
	\end{tabular}
\caption{Minimum and maximum eigenvalues of~\eqref{generalizedEigenvalueProblem}
on sequences of badly-shaped elements.} \label{table:eigenvalues_badlyShapedElements}
\end{center}
\end{table}

From the results in Table~\ref{table:eigenvalues_badlyShapedElements},
for the case of sequences of elements
as in Figure~\ref{fig:hourglass2D}, we observe
that the minimum eigenvalues decrease moderately,
while the maximum eigenvalues essentially do not increase;
for the case of sequences of elements
as in Figure~\ref{fig:hourglass3D},
a less moderate growth of the maximum eigenvalues
is displayed,
while the minimum eigenvalues lose
one order of magnitude at each halving step;
for the case of sequences of elements
as in Figure~\ref{fig:compressedTrapezoid},
the maximum eigenvalues grow slowly
and the minimum eigenvalues are essentially constant,
with the exception of the last two elements
of the sequence;
we shall motivate this behaviour in the next paragraph.

%%%
\paragraph*{Numerical results: condition numbers.}
%%%
Under the same choices as in the previous paragraph,
in Table~\ref{table:conditionNumber_badlyShapedElements},
we assess the behaviour of the condition numbers
of the matrices~$\Abf$, $\widetilde\Abf$, and~$\Bbf$
in problem~\eqref{generalizedEigenvalueProblem};
we recall that the matrices~$\Abf$ and~$\widetilde\Abf$
are computed based on the projection-based
\eqref{stabilization:general-order}
and dofi-dofi \eqref{stabilization:dofi-dofi}
stabilizations.

\begin{table}[!ht]
	\begin{center}
		\begin{tabular}{cccccccc}
\multicolumn{1}{c|}{Fig.~\ref{fig:hourglass2D}}&{$\Bbf$} &{5.8348e+03} &{2.2734e+04} &{7.7885e+04} &{2.4295e+05} &{6.2642e+05} &{1.7875e+06} \\
\multicolumn{1}{c|}{} & {$\Abf$} &{4.3858e+03}  &{8.7185e+03}  &{1.1033e+04}  &{1.2342e+04}  &{1.3042e+04}  &{1.3395e+04} \\
\multicolumn{1}{c|}{}&{$\widetilde\Abf$}  &{3.4318e+03}&{6.3807e+03}&{7.4790e+03}&{7.9627e+03}&{8.1894e+03}&{8.2922e+03}\\
\hline
\multicolumn{1}{c|}{Fig.~\ref{fig:hourglass3D}}&{$\Bbf$} &{3.4677e+04} &{9.3584e+04} &{2.0927e+05} &{5.4514e+05} &{2.3226e+06} &{2.5258e+07}\\ 
\multicolumn{1}{c|}{}&{$\Abf$}&  {9.9945e+03} &{1.8980e+04} &{2.6203e+04} &{3.0440e+04} &{3.2767e+04} &{3.3994e+04}\\
\multicolumn{1}{c|}{}&{$\widetilde\Abf$} &{6.7282e+03}&{1.4792e+04}&{2.1655e+04}&{2.5956e+04}&{2.8424e+04}&{2.9758e+04}\\
\hline
\multicolumn{1}{c|}{Fig.~\ref{fig:compressedTrapezoid}}&{$\Bbf$} & {2.2076e+05} & {2.1286e+06} & {3.5757e+07} & {1.1220e+09} & {4.4996e+09} & {2.3369e+10}\\
\multicolumn{1}{c|}{}&{$\Abf$}& {2.4149e+10} & {1.1371e+11} & {1.2118e+12} & {2.2248e+13} & {5.6501e+14} & {1.6530e+16} \\
\multicolumn{1}{c|}{}&{$\widetilde\Abf$} & {1.3668e+10} & {6.9767e+10} & {9.0468e+11} & {1.9766e+13} & {5.4511e+14} & {1.6521e+16}
		\end{tabular}
\caption{Condition numbers of~$\Abf$, $\widetilde{\Abf}$,
and~$\Bbf$ in~\eqref{generalizedEigenvalueProblem}
on sequences of badly-shaped elements.} \label{table:conditionNumber_badlyShapedElements}
\end{center}
\end{table}
%%%%

From the results in Table~\ref{table:conditionNumber_badlyShapedElements},
we deduce some facts:
the condition numbers of $\Abf$ and~$\widetilde{\Abf}$
are essentially the same,
the latter being always slightly smaller
than the former;
there is no clear indication on whether
the condition numbers of~$\Bbf$
are larger or smaller than those of~$\Abf$
and~$\widetilde{\Abf}$;
the condition numbers for the last two
trapezoidal elements are rather high,
which suggests a reason why the corresponding
minimum eigenvalues from the previous
paragraph looked unreliable.

%%%%%%%%%%%%%%%%%
\subsection{Stability constants increasing the degree of accuracy} \label{subsection:stability-p}
%%%%%%%%%%%%%%%%%
We assess numerically the behaviour of the
minimum (nonzero) and maximum eigenvalues
for the generalized eigenvalue
problem~\eqref{generalizedEigenvalueProblem}
for increasing degree of accuracy
on a fixed triangular element
of vertices $(0,0)$, $(1,0)$, and $(0,1)$
in a compressible material;
see Table~\ref{table:p-version}.
We provide results for the projection-based \eqref{stabilization:general-order}
and the dofi-dofi~\eqref{stabilization:dofi-dofi}
stabilizations $\SE(\cdot,\cdot)$ and $\StildeE(\cdot,\cdot)$.

%%%
\begin{table}
\begin{center}
\begin{tabular}{ccc|cc}
\multicolumn{1}{c}{} &
\multicolumn{2}{c}{$\SE(\cdot,\cdot)$} &
\multicolumn{2}{c}{$\StildeE(\cdot,\cdot)$}\\
\hline
\multicolumn{1}{c}{$p$} &\multicolumn{1}{c}{$\gimel_{min}$} & \multicolumn{1}{c}{$\gimel_{max}$} &\multicolumn{1}{c}{$\gimel_{min}$} & \multicolumn{1}{c}{$\gimel_{max}$}\\
\hline
\multicolumn{1}{c|}{1} &{1.0000e+00} &{1.7600e+02}  &{2.6323e-01} &{1.7398e+02}\\ 
\multicolumn{1}{c|}{2} &{9.9802e-01} &{5.9800e+02}  &{4.3155e-01} &{5.7797e+02}\\ 
\multicolumn{1}{c|}{3} &{9.9640e-01} &{1.2708e+03}  &{9.3328e-01} &{1.2491e+03}\\ 
\multicolumn{1}{c|}{4} &{9.9057e-01} &{2.3578e+03}  &{9.9057e-01} &{2.3313e+03}\\ 
\multicolumn{1}{c|}{5} &{9.6450e-01} &{4.1013e+03}  &{9.6377e-01} &{4.0739e+03}\\ 
\multicolumn{1}{c|}{6} &{9.3644e-01} &{6.8462e+03}  &{9.3645e-01} &{6.8147e+03}\\ 
\end{tabular}
\caption{Minimum and maximum eigenvalues of~\eqref{generalizedEigenvalueProblem}
for variable degrees of accuracy.}
\label{table:p-version}
\end{center}
\end{table}
%%%

From the results in Table~\ref{table:p-version},
it appears that the minimum and maximum eigenvalues
behave rather robustly with respect to the degree
of accuracy.

%%%%%%%%%%%%%%%%%%%%%%%%%%%%%%%%%%%%%%%%%%%%%%%%%%%%%%%%%%%%%%%%%%%%%%%%%%
\section{Conclusions} \label{section:conclusions}
%%%%%%%%%%%%%%%%%%%%%%%%%%%%%%%%%%%%%%%%%%%%%%%%%%%%%%%%%%%%%%%%%%%%%%%%%%
For shape-regular sequences of polytopic meshes,
we derived rigorously interpolation and stability
estimates for HR-type virtual elements
with fixed degree of accuracy in three dimensions
(the two dimensional case is a low hanging fruit
of the analysis given in this work).
Essential tools in the analysis are integrations by parts,
polynomial inverse estimates over suitable
sub-tessellations of the polytopic elements,
direct estimates,
polynomial approximation results,
and the proof of the well-posedness
(with a priori estimates on the data involving
constants that are explicit
with respect to the shape of the domain)
of mixed formulations
of linear elasticity problems.
Besides, we investigated on the numerical level
the behaviour of the stabilization
on sequences of badly-shaped elements
and increasing degree of accuracy
employing two different stabilizations:
standard assumptions on the geometry
are not enough to guarantee the robustness of the method.
Future investigation will cope with the construction
of ad-hoc stabilizations leading to stable
bilinear forms on degenerating geometries
and increasing degree of accuracy.

%%%%%%%%%%%%%%
\paragraph*{Acknowledgments.}
MB and LM have been partially funded by the
European Union (ERC, NEMESIS, project number 101115663);
views and opinions expressed are however those
of the authors only and do not necessarily reflect
those of the EU or the ERC Executive Agency.
MB and MV have been partially funded by INdAM-GNCS project
CUP\_E53C23001670001.
LM has been partially funded by MUR (PRIN2022 research grant n. 202292JW3F).
GV and MV have been partially funded by MUR (PRIN2022 research grant n. 2022MBY5JM).
GV has been partially funded
by the Italian Ministry of Universities and Research (MUR)
and the European Union through Next Generation EU, M4.C2.1.1,
through the grant
PRIN2022PNRR n. P2022M7JZW ``SAFER MESH - Sustainable mAnagement
oF watEr Resources ModEls and numerical MetHods'' research grant,
CUP H53D23008930001.
All the authors are also members of the Gruppo Nazionale Calcolo Scientifico-Istituto Nazionale di Alta Matematica (GNCS-INdAM).

%%%%%%%%%%%%%%%%%%%%%%%%%%%%%%%%%%%%%%%%%%%%%%%%%%%%%%%%%%%%%%%%%%%%%%%%%%
{\footnotesize
\bibliography{bibliography.bib}}
\bibliographystyle{plain}

\appendix

%%%%%%%%%%%%%%%%%%%%%%%%%%%%%%%%%%%%%%%%%%%%%%%%%%%%%%%%%%%%%%%%%%%%%%%%%%
\section{Well-posedness of the linear elasticity problem in mixed formulation
with essential boundary conditions} \label{appendix}
%%%%%%%%%%%%%%%%%%%%%%%%%%%%%%%%%%%%%%%%%%%%%%%%%%%%%%%%%%%%%%%%%%%%%%%%%%
We show a stability estimate for the Hellinger--Reissner formulation
of linear elasticity problems with essential boundary conditions.
We shall be able to prove a priori estimates
that are explicit in terms of the ratio
between the diameter and the radius of the
largest ball with respect to which the domain
is star-shaped.
The main result of the appendix is Theorem~\ref{theorem:stability-HR} below.

%%%%
\paragraph*{Strong formulation.}
%%%%
Let~$\Omega$ be a bounded, polyhedral, Lipschitz domain in $\mathbb R^3$,
with boundary~$\partial\Omega$ and outward unit normal vector~$\nbf$.
Given a volumetric force~$\fbf$ in~$\Lbf^2(\Omega)$
and~$\sigmaboldN$ in~$\Hbf^{-\frac12}(\partial \Omega)$,
the linear elasticity problem in mixed formulation
with an inhomogeneous, essential boundary condition reads as follows:
Find a symmetric stress tensor~$\sigmabold$ and a displacement field~$\ubf$ such that
\begin{equation} \label{app:strong-formulation}
\begin{cases}
-\divbf \ \sigmabold = \fbf                & \text{in } \Omega\\
\sigmabold = \Cbb\nablaS \ubf   & \text{in } \Omega\\
\sigmabold \nbf=\sigmaboldN     & \text{on } \partial \Omega.
\end{cases}
\end{equation}
On occasion, we shall be demanding extra regularity on the boundary condition, namely
\begin{equation} \label{app:extra-regularity-essential-condition}
    \sigmaboldN \in \Lbf^2(\partial \Omega) .
\end{equation}
This is needed to derive explicit a priori estimates on the solution to~\eqref{app:strong-formulation}.
In what follows,
\begin{equation} \label{app:geometry}
\begin{split}
& \text{$\hOmega$ denotes the diameter of~$\Omega$;}\\
& \text{$\rhoOmega$ denotes the radius of the largest ball
with respect to which~$\Omega$ is star-shaped.}
\end{split}
\end{equation} 

%%%%
\paragraph*{A Stokes' lifting for the inhomogeneous essential condition.}
%%%%
In order to write a weak formulation for~\eqref{app:strong-formulation},
we construct a lifting of the essential condition~$\sigmaboldN$.
To this aim, given the identity tensor $\Ibb$,
we consider the Stokes problem:
Find  a velocity field~$\ufrak$ and a pressure~$\pfrak$ such that
\begin{equation} \label{app:Stokes-strong}
\begin{cases}
-\divbf(\nablaS \ufrak - \pfrak \ \Ibb) = \zerobf    & \text{in } \Omega \\
\div\ufrak = 0                                                & \text{in } \Omega \\
(\nablaS \ufrak -\pfrak \ \Ibb) \nbf = \sigmaboldN    & \text{on } \partial\Omega\\
\int_\Omega \ufrak = \zerobf , \qquad\qquad
\int_\Omega \nabla\times\ufrak = \zerobf .
\end{cases}
\end{equation}
Introduce $\Htildebfo(\Omega)$ as the subspace of $\Hbf^1(\Omega)$
consisting of fields~$\vfrak$ satisfying the two conditions
in the last line of~\eqref{app:Stokes-strong}.

A weak formulation of~\eqref{app:Stokes-strong} reads
\begin{equation} \label{app:Stokes-weak}
\begin{cases}
\text{Find } (\ufrak, \pfrak) \in \Vfrak\times \Qfrak := \Htildebfo(\Omega) \times L^2(\Omega) \text{ such that}\\
(\nablaS\ufrak,\nablaS\vfrak)_{0,\Omega}
    - (\div\vfrak,\pfrak)_{0,\Omega} = \langle \sigmaboldN,\vfrak\rangle
        & \forall \vfrak \in \Vfrak \\
(\div\ufrak,\qfrak)_{0,\Omega} = 0 
        & \forall \qfrak \in \Qfrak .
\end{cases}
\end{equation}
In order to prove the well-posedness of~\eqref{app:Stokes-weak},
we prove the two following technical results.

\begin{lemma}[Stokes' inf-sup condition] \label{lemma:inf-sup-Stokes}
There exists a positive constant~$\beta_0$,
which only depends on the ratio between~$\hOmega$ and~$\rhoOmega$ in~\eqref{app:geometry}
such that
\[
\inf_{\qfrak\in\Qfrak}
\sup_{\vfrak\in\Vfrak}
\frac{(\div\vfrak,\qfrak)_{0,\Omega}}{\SemiNorm{\vfrak}_{1,\Omega} \Norm{\qfrak}_{0,\Omega}}
\ge \beta_0.
\]
\end{lemma}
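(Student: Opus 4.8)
The plan is to prove the equivalent statement that the divergence operator $\divbf\colon\Vfrak\to\Qfrak$ admits a bounded right inverse whose norm is controlled by the ratio $\hOmega/\rhoOmega$. Indeed, if for every $\qfrak\in\Qfrak$ one exhibits $\vfrak\in\Vfrak$ with $\div\vfrak=\qfrak$ and $\SemiNorm{\vfrak}_{1,\Omega}\le C\,\Norm{\qfrak}_{0,\Omega}$, then bounding the supremum from below by the value at this particular $\vfrak$ gives $(\div\vfrak,\qfrak)_{0,\Omega}=\Norm{\qfrak}_{0,\Omega}^2$, so the quotient is at least $\Norm{\qfrak}_{0,\Omega}/C$; dividing by $\Norm{\qfrak}_{0,\Omega}$ yields the inf-sup constant $\beta_0=1/C$.

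Since $\Qfrak=L^2(\Omega)$ is the full space, I would split $\qfrak=\bar q+q_0$ into its mean $\bar q:=\vert\Omega\vert^{-1}\int_\Omega\qfrak$ and its zero-average part $q_0\in L^2_0(\Omega)$, noting $\Norm{q_0}_{0,\Omega}\le\Norm{\qfrak}_{0,\Omega}$. For $q_0$ I would invoke the Bogovskii right inverse of the divergence on a domain star-shaped with respect to a ball: there exists $\vfrak_0\in\Hbf^1_0(\Omega)$ with $\div\vfrak_0=q_0$ and $\SemiNorm{\vfrak_0}_{1,\Omega}\le C_B\Norm{q_0}_{0,\Omega}$, the constant $C_B$ being explicit in $\hOmega/\rhoOmega$ (see, e.g., Galdi's monograph or Dur\'an's lecture notes). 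The constant part is absorbed by the explicit radial field $\vfrak_1(\xbf):=\tfrac{\bar q}{3}(\xbf-\xbf_\Omega)$, with $\xbf_\Omega$ the centre of the ball in~\eqref{app:geometry}, for which $\divbf\vfrak_1=\bar q$ and a direct computation gives $\SemiNorm{\vfrak_1}_{1,\Omega}=\tfrac{1}{\sqrt3}\vert\bar q\vert\,\vert\Omega\vert^{1/2}\le\tfrac{1}{\sqrt3}\Norm{\qfrak}_{0,\Omega}$.

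The key observation that makes the two constraints defining $\Vfrak$ easy to enforce is that both pieces already carry vanishing mean vorticity: for $\vfrak_0\in\Hbf^1_0(\Omega)$ the identity $\int_\Omega\nabla\times\vfrak_0=\int_{\partial\Omega}\nbf\times\vfrak_0=\zerobf$ holds because $\vfrak_0$ vanishes on $\partial\Omega$, while $\nabla\times\vfrak_1=\zerobf$ pointwise. Hence $\vfrak_0+\vfrak_1$ satisfies $\int_\Omega\nabla\times(\cdot)=\zerobf$, and it remains only to impose the zero-mean condition by subtracting the constant $\boldalpha:=\vert\Omega\vert^{-1}\int_\Omega(\vfrak_0+\vfrak_1)$. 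Setting $\vfrak:=\vfrak_0+\vfrak_1-\boldalpha$ produces a field in $\Vfrak=\Htildebfo(\Omega)$ with $\div\vfrak=\qfrak$; since subtracting a constant alters neither the divergence nor the seminorm, $\SemiNorm{\vfrak}_{1,\Omega}\le(C_B+\tfrac{1}{\sqrt3})\Norm{\qfrak}_{0,\Omega}$, and the claim follows with $\beta_0=(C_B+\tfrac{1}{\sqrt3})^{-1}$, which depends only on $\hOmega/\rhoOmega$.

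The only genuinely nontrivial ingredient---and thus the main obstacle---is the Bogovskii estimate with a constant made explicit in terms of $\hOmega/\rhoOmega$; everything else is elementary. I would therefore devote the bulk of the argument either to citing the sharp star-shaped-domain form of the Bogovskii bound or to reproducing its proof, and merely verify by hand the vanishing of the curl moments and the seminorm bound for the radial field $\vfrak_1$.
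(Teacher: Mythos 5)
Your proof is correct. Note that the paper gives no self-contained argument for this lemma: it simply cites \cite[Proposition 1.1]{Botti-Mascotto:2025}, which bounds $\beta_0$ from below in terms of the Babu\v{s}ka--Aziz constant (i.e., the norm of a right inverse of the divergence mapping $L^2_0(\Omega)$ into $\Hbf^1_0(\Omega)$), and then \cite[Corollary 19, point 3]{Guzman-Salgado:2021} for the explicit dependence of that constant on the ratio $\hOmega/\rhoOmega$. Your argument rests on exactly the same core ingredient---the Bogovskii/Babu\v{s}ka--Aziz right inverse with a constant explicit in the aspect ratio of a star-shaped domain---but you carry out explicitly the reduction that the paper delegates to the reference: the passage from the standard pair $(\Hbf^1_0(\Omega),L^2_0(\Omega))$ to the pair $(\Vfrak,\Qfrak)=(\Htildebfo(\Omega),L^2(\Omega))$ appearing in the lemma. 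That passage is the only nonstandard point here (since $\Qfrak$ contains the constants, fields vanishing on $\partial\Omega$ cannot suffice), and your two observations dispose of it cleanly: the radial field $\vfrak_1=\tfrac{\bar q}{3}(\xbf-\xbf_\Omega)$ lifts the mean-value part with $\div\vfrak_1=\bar q$, is curl-free as a gradient field, and has seminorm $\tfrac{1}{\sqrt3}\vert\bar q\vert\,\vert\Omega\vert^{1/2}\le\tfrac{1}{\sqrt3}\Norm{\qfrak}_{0,\Omega}$; and any $\vfrak_0\in\Hbf^1_0(\Omega)$ automatically has vanishing mean curl because $\int_\Omega\nabla\times\vfrak_0=\int_{\partial\Omega}\nbf\times\vfrak_0=\zerobf$, so membership in $\Vfrak$ costs only the subtraction of a constant vector, which alters neither the divergence nor the seminorm. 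The Bogovskii bound with explicit star-shaped constant that you invoke is indeed available in the literature the paper itself leans on (e.g., \cite{Duran:2012}, \cite{Guzman-Salgado:2021}), so your citation is legitimate and plays the same role as the paper's citation of the Babu\v{s}ka--Aziz inequality. What your route buys is a self-contained, elementary construction with a transparent constant $\beta_0=(C_B+1/\sqrt3)^{-1}$; what the paper's route buys is brevity.
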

\begin{proof}
A lower bound for the constant~$\beta_0$ is given, e.g.,
in \cite[Proposition 1.1]{Botti-Mascotto:2025}
in terms of the so-called Babu\v ska-Aziz inequality \cite{Babuska-Aziz:1972}.
In turns, an estimate for the constant appearing in that inequality
(also known as the generalized Poincar\'e inequality),
which is explicit in terms of the ratio between~$\hOmega$ and~$\rhoOmega$ in~\eqref{app:geometry},
is given in \cite[Corollary 19, point 3]{Guzman-Salgado:2021}.
\end{proof}

\begin{lemma}[Korn's inequality] \label{lemma:coercivity-kernel-Stokes}
There exists a positive constant~$\alpha_0$,
which only depends on the ratio between~$\hOmega$ and~$\rhoOmega$ in~\eqref{app:geometry},
such that
\begin{equation} \label{app:Korn}
\alpha_0  \Norm{\nabla \vfrak}_{0,\Omega}
\le \Norm{\nablaS \vfrak}_{0,\Omega}
\qquad\qquad \forall \vfrak \in \Vfrak.
\end{equation}
\end{lemma}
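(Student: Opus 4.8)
The plan is to reduce \eqref{app:Korn} to a Korn inequality \emph{modulo rigid body motions}, for which an explicit constant in terms of $\hOmega/\rhoOmega$ is available, and then to use the two constraints defining $\Vfrak$ to render a pure translation the optimal rigid body approximant. The underlying tool is the pointwise $L^2$-orthogonal splitting of the full gradient into its symmetric and skew-symmetric parts: writing $\mathbf W(\vfrak):=\tfrac12\big(\nabla\vfrak-(\nabla\vfrak)^\top\big)$, the Frobenius orthogonality $\nablaS\vfrak:\mathbf W(\vfrak)=0$ at every point gives
\[
\Norm{\nabla\vfrak}_{0,\Omega}^2 = \Norm{\nablaS\vfrak}_{0,\Omega}^2 + \Norm{\mathbf W(\vfrak)}_{0,\Omega}^2 \qquad \forall \vfrak \in \Vfrak,
\]
so that \eqref{app:Korn} is equivalent to the single bound $\Norm{\mathbf W(\vfrak)}_{0,\Omega}\le c\,\Norm{\nablaS\vfrak}_{0,\Omega}$ on the skew-symmetric part.

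The crucial point is that the constraints defining $\Htildebfo(\Omega)$ force $\mathbf W(\vfrak)$ to have zero mean. Indeed, in three dimensions the independent entries of $\mathbf W(\vfrak)$ are, up to sign, the components of $\tfrac12\,\nabla\times\vfrak$; hence the condition $\int_\Omega \nabla\times\vfrak=\zerobf$ is equivalent to $\int_\Omega \mathbf W(\vfrak)=\zerobf$. For a rigid body motion $\rbf=\boldalpha+\omegabold\wedge\xbf$ one has $\nablaS\rbf=\zerobf$ while $\mathbf W(\rbf)$ equals the constant skew matrix $\mathbf W^{\omegabold}$ associated with $\omegabold$, so $\nablaS(\vfrak-\rbf)=\nablaS\vfrak$ and $\mathbf W(\vfrak-\rbf)=\mathbf W(\vfrak)-\mathbf W^{\omegabold}$. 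Expanding the square and using $\int_\Omega\mathbf W(\vfrak)=\zerobf$ to cancel the cross term, the computation shows
\[
\Norm{\nabla(\vfrak-\rbf)}_{0,\Omega}^2 = \Norm{\nablaS\vfrak}_{0,\Omega}^2 + \Norm{\mathbf W(\vfrak)}_{0,\Omega}^2 + \vert\Omega\vert\,\vert\mathbf W^{\omegabold}\vert^2 \ge \Norm{\nabla\vfrak}_{0,\Omega}^2 ,
\]
with equality at $\omegabold=\zerobf$. Consequently $\Norm{\nabla\vfrak}_{0,\Omega}=\inf_{\rbf\in\RM(\Omega)}\Norm{\nabla(\vfrak-\rbf)}_{0,\Omega}$ for every $\vfrak\in\Vfrak$: the zero-mean-curl condition makes a translation the best gradient approximant among all rigid motions (the condition $\int_\Omega\vfrak=\zerobf$ plays no role here, but would enter a companion Poincar\'e estimate).

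It then remains to invoke the Korn inequality modulo rigid body motions: for star-shaped domains there is a constant $C_K$, depending only on $\hOmega/\rhoOmega$, with $\inf_{\rbf\in\RM(\Omega)}\Norm{\nabla(\vfrak-\rbf)}_{0,\Omega}\le C_K\,\Norm{\nablaS\vfrak}_{0,\Omega}$ for all $\vfrak\in\Hbf^1(\Omega)$. Combined with the identity above, this yields \eqref{app:Korn} with $\alpha_0=C_K^{-1}$. I expect the main obstacle to be exactly the explicit dependence of $C_K$ on the star-shapedness ratio; as with Lemma~\ref{lemma:inf-sup-Stokes}, this can be imported from the literature, or derived from the generalized Poincar\'e/Babu\v ska--Aziz constant already used there via the classical equivalence between the second Korn inequality and the surjectivity of the divergence. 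The algebraic reduction above is elementary and introduces no further shape dependence.
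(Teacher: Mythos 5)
Your proposal is correct, and it takes a genuinely different route at the level of which key lemma carries the analytic weight, so a comparison is worthwhile. Both proofs start from the same pointwise-orthogonal splitting of the gradient into symmetric and skew parts, and both exploit the zero-mean-curl constraint built into $\Htildebfo(\Omega)$; your algebra is sound (the mean of the skew gradient vanishes exactly when the mean of the curl does, so the cross term cancels and the infimum over rigid motions is attained at translations, giving the exact identity $\Norm{\nabla\vfrak}_{0,\Omega}=\inf_{\rbf\in\RM(\Omega)}\Norm{\nabla(\vfrak-\rbf)}_{0,\Omega}$). After that the two arguments diverge: you delegate everything to a Korn inequality modulo rigid body motions with a constant explicit in $\hOmega/\rhoOmega$, whereas the paper proves that inequality inline — it rewrites the skew part as the curl, inserts the infimum over constant vectors thanks to the zero mean, applies a Ne\v cas--Lions inequality (cited from Dur\'an, constant $C_{NL,0}$ explicit in the star-shapedness ratio), and closes with the algebraic bound $\Norm{\nabla(\nabla\times\vfrak)}_{-1,\Omega}\le 2\Norm{\nabla\times(\nablaS\vfrak)}_{-1,\Omega}\le 2\Norm{\nablaS\vfrak}_{0,\Omega}$. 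What your route buys is modularity and brevity, plus the elegant exact identity, which makes transparent why the constraints in $\Vfrak$ matter. What it costs is that the entire shape-dependence question is pushed into the cited constant $C_K$; such results do exist with the required dependence for star-shaped domains (Kondrat'ev--Oleinik, Dur\'an--Muschietti, Acosta--Dur\'an, Horgan's survey), so this is a citation matter rather than a gap. Be aware, though, that your fallback suggestion — deriving $C_K$ from the Babu\v ska--Aziz/divergence-surjectivity constant via the classical equivalence — would, if carried out in detail, essentially reproduce the paper's proof, since the Ne\v cas--Lions inequality is precisely the standard bridge from divergence surjectivity to Korn's second inequality.
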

\begin{proof}
We start by noting the splitting of the gradient into its
symmetric ($\nablaS$) and skew-symmetric ($\nablaSS$) parts,
and the identity
\[
\Norm{\nabla\vfrak}_{0,\Omega}^2
= \Norm{\nablaS\vfrak}_{0,\Omega}^2
  + \Norm{\nablaSS\vfrak}_{0,\Omega}^2 .
\]
It suffices to bound the second term on the right-hand side.
Algebraic computations and the fact that $\vfrak$
belongs to~$\Htildebfo(\Omega)$ (notably the fact
that $\nabla\times\vfrak$ has zero average) imply that
\[
\Norm{\nablaSS\vfrak}_{0,\Omega}
= \frac{1}{\sqrt2} \Norm{\nabla\times\vfrak}_{0,\Omega}
= \frac{1}{\sqrt2} \inf_{\cbf \in \Rbb^3} \Norm{\nabla\times\vfrak - \cbf}_{0,\Omega}.
\]
A Ne\v cas-Lions inequality holds true
using, e.g., \cite[Theorem 3.2]{Duran:2012},
with a constant~$C_{NL,0}$ only depending
on the ratio between~$\hOmega$ and~$\rhoOmega$ in~\eqref{app:geometry} such that
\[
\inf_{\cbf \in \Rbb^3} \Norm{\nabla\times\vfrak - \cbf}_{0,\Omega}
\le C_{NL,0} \Norm{\nabla(\nabla\times\vfrak)}_{-1,\Omega}.
\]
Algebraic computations
as in \cite[Lemma~$3.1$]{Botti-Mascotto:2025}
and manipulations on negative Sobolev norms entail
\[
\Norm{\nabla(\nabla\times\vfrak)}_{-1,\Omega}
\le 2 \Norm{\nabla\times(\nablaS\vfrak)}_{-1,\Omega}
\le 2 \Norm{\nablaS\vfrak}_{0,\Omega}.
\]
Combining the above displays yields the assertion.
\end{proof}

We deduce the well-posedness of problem~\eqref{app:Stokes-weak}.

\begin{proposition} \label{proposition:stability-Stokes}
Problem~\eqref{app:Stokes-weak} is well-posed.
Given~$(\ufrak,\pfrak)$ the solutions to~\eqref{app:Stokes-weak},
there exists a positive constant~$c_L$ such that
\begin{equation} \label{app:stab-Stokes-1}
\SemiNorm{\ufrak}_{1,\Omega} + \Norm{\pfrak}_{0,\Omega}
\le c_L \Norm{\sigmaboldN}_{-\frac12,\partial\Omega}.
\end{equation}
If assumption~\eqref{app:extra-regularity-essential-condition}
holds true, then there exists a positive constant~$\widetilde c_L$
only depending on the ratio between~$\hOmega$ and~$\rhoOmega$ in~\eqref{app:geometry} such that
\begin{equation} \label{app:stab-Stokes-2}
\SemiNorm{\ufrak}_{1,\Omega} + \Norm{\pfrak}_{0,\Omega}
\le \widetilde c_L \Norm{\sigmaboldN}_{0,\partial\Omega}.
\end{equation}
\end{proposition}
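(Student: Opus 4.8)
The plan is to read~\eqref{app:Stokes-weak} as a symmetric saddle-point problem with bilinear forms $a(\ufrak,\vfrak):=(\nablaS\ufrak,\nablaS\vfrak)_{0,\Omega}$ on $\Vfrak\times\Vfrak$ and $b(\vfrak,\qfrak):=-(\div\vfrak,\qfrak)_{0,\Omega}$ on $\Vfrak\times\Qfrak$, and to invoke the Babu\v{s}ka--Brezzi theory; see, e.g., \cite{Boffi-Brezzi-Fortin:2013}. First I would collect the three structural ingredients. Continuity of $a$ and $b$ is immediate from the Cauchy--Schwarz inequality together with $\Norm{\nablaS\vfrak}_{0,\Omega}\le\SemiNorm{\vfrak}_{1,\Omega}$. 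Coercivity of $a$ on the whole of $\Vfrak$ (a fortiori on the kernel of $b$) follows from Korn's inequality in Lemma~\ref{lemma:coercivity-kernel-Stokes}, which gives $a(\vfrak,\vfrak)\ge\alpha_0^2\SemiNorm{\vfrak}_{1,\Omega}^2$; since fields in $\Vfrak$ have zero average, a Poincar\'e inequality with constant depending only on $\hOmega/\rhoOmega$ turns $\SemiNorm{\cdot}_{1,\Omega}$ into a norm on $\Vfrak$ equivalent to $\Norm{\cdot}_{1,\Omega}$. The inf-sup condition for $b$ is exactly Lemma~\ref{lemma:inf-sup-Stokes}, with constant $\beta_0$.

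Having verified the hypotheses, the abstract theory yields well-posedness together with the a priori bound
\begin{equation*}
\SemiNorm{\ufrak}_{1,\Omega} + \Norm{\pfrak}_{0,\Omega}
\le C(\alpha_0,\beta_0)\, \Norm{F}_{\Vfrak'},
\qquad
\Norm{F}_{\Vfrak'} := \sup_{\vfrak\in\Vfrak,\,\SemiNorm{\vfrak}_{1,\Omega}\ne0}
\frac{\langle\sigmaboldN,\vfrak\rangle}{\SemiNorm{\vfrak}_{1,\Omega}},
\end{equation*}
where $C(\alpha_0,\beta_0)$ depends only on $\alpha_0$ and $\beta_0$, hence only on $\hOmega/\rhoOmega$, and where the absence of a further term reflects the vanishing of the datum in the divergence equation. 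It then remains to estimate $\Norm{F}_{\Vfrak'}$ in the two regularity regimes.

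For~\eqref{app:stab-Stokes-1}, I would bound the duality pairing by $\langle\sigmaboldN,\vfrak\rangle\le\Norm{\sigmaboldN}_{-\frac12,\partial\Omega}\Norm{\vfrak}_{\frac12,\partial\Omega}$, use continuity of the trace operator $\Hbf^1(\Omega)\to\Hbf^{\frac12}(\partial\Omega)$ (see, e.g., \cite[Theorem~3.10]{Ern-Guermond:2021}), and then the Poincar\'e inequality above to replace $\Norm{\vfrak}_{1,\Omega}$ by $\SemiNorm{\vfrak}_{1,\Omega}$; this gives $\Norm{F}_{\Vfrak'}\le c_L\Norm{\sigmaboldN}_{-\frac12,\partial\Omega}$ and hence~\eqref{app:stab-Stokes-1}. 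For~\eqref{app:stab-Stokes-2}, under~\eqref{app:extra-regularity-essential-condition} the pairing reduces to the boundary $L^2$ product, so $\langle\sigmaboldN,\vfrak\rangle=(\sigmaboldN,\vfrak)_{0,\partial\Omega}\le\Norm{\sigmaboldN}_{0,\partial\Omega}\Norm{\vfrak}_{0,\partial\Omega}$; here I would invoke a scaled (multiplicative) trace inequality on star-shaped domains, with constant explicit in $\hOmega/\rhoOmega$, together with the Poincar\'e inequality on $\Vfrak$, to control $\Norm{\vfrak}_{0,\partial\Omega}$ by $\SemiNorm{\vfrak}_{1,\Omega}$ up to the natural power of $\hOmega$. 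This produces $\Norm{F}_{\Vfrak'}\le\widetilde c_L\Norm{\sigmaboldN}_{0,\partial\Omega}$ with $\widetilde c_L$ depending only on $\hOmega/\rhoOmega$.

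The main obstacle is not the saddle-point structure, which is routine, but tracking the explicit dependence of every constant on $\hOmega/\rhoOmega$. The coercivity and inf-sup constants are already explicit through Lemmas~\ref{lemma:coercivity-kernel-Stokes} and~\ref{lemma:inf-sup-Stokes}, so the delicate point is the trace step: for~\eqref{app:stab-Stokes-2} one genuinely needs a trace inequality whose constant is quantified on star-shaped domains, which is precisely what renders $\widetilde c_L$ explicit, whereas for~\eqref{app:stab-Stokes-1} only a continuous (not quantified) trace map $\Hbf^1(\Omega)\to\Hbf^{\frac12}(\partial\Omega)$ is used, consistent with the statement leaving $c_L$ unspecified.
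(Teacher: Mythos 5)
Your proposal is correct and follows essentially the same route as the paper: well-posedness from Korn's inequality (Lemma~\ref{lemma:coercivity-kernel-Stokes}) plus the inf-sup condition (Lemma~\ref{lemma:inf-sup-Stokes}), and then the two stability bounds obtained by estimating the boundary functional, with the non-quantified trace inequality yielding an implicit $c_L$ in the $\Hbf^{-\frac12}$ case and a scaled trace inequality explicit in $\hOmega/\rhoOmega$ yielding $\widetilde c_L$ in the $\Lbf^2$ case. The only cosmetic difference is that the paper performs the energy argument explicitly (testing with $\vfrak=\ufrak$, $\qfrak=\pfrak$, so that the divergence-free condition kills the pressure term) and then bounds the pressure via the inf-sup condition, whereas you quote the abstract Babu\v{s}ka--Brezzi a priori bound as a black box — the same manipulations in a different packaging.
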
 
\begin{proof}
Existence and uniqueness of a solution to~\eqref{app:Stokes-weak}
are a consequence of the standard inf-sup theory,
and Lemmas~\ref{lemma:inf-sup-Stokes}
and~\ref{lemma:coercivity-kernel-Stokes}.

As for the stability estimates,
we focus first on the case $\sigmaboldN$ only belongs to $\Hbf^{-\frac12}(\partial D)$.
We take $\vfrak=\ufrak$ and $\qfrak=\pfrak$ in~\eqref{app:Stokes-weak}
and deduce
\[
\Norm{\nablaS \ufrak}_{0,\Omega}^2
= \langle \sigmaboldN,\ufrak \rangle
\le \Norm{\sigmaboldN}_{-\frac12,\partial\Omega} \Norm{\ufrak}_{\frac12,\partial\Omega}.
\]
The standard trace inequality \cite[Theorem~3.10, point (iii)]{Ern-Guermond:2021},
the Poincar\'e inequality \cite[Lemma 3.24]{Ern-Guermond:2021},
and Korn's inequality~\eqref{app:Korn}
entail the existence of a positive~$C$ such that
\[
\Norm{\nablaS \ufrak}_{0,\Omega}
\le C \Norm{\sigmaboldN}_{-\frac12,\partial\Omega}.
\]
We are not able to detect an explicit dependence of~$C$
on the ratio between~$\hOmega$ and~$\rhoOmega$ in~\eqref{app:geometry},
due to the use of the standard trace inequality.
Combining the above bounds yields the bound on $\SemiNorm{\ufrak}_{1,\Omega}$
in~\eqref{app:stab-Stokes-1}.
The bound on $\Norm{\pfrak}_{0,\Omega}$
follows from the standard inf-sup theory.
\medskip

We now prove~\eqref{app:stab-Stokes-2} under
assumption~\eqref{app:extra-regularity-essential-condition}.
We take $\vfrak=\ufrak$ and $\qfrak=\pfrak$ in~\eqref{app:Stokes-weak},
and deduce
\[
\Norm{\nablaS \ufrak}_{0,\Omega}^2
= \langle \sigmaboldN,\ufrak \rangle
\le \Norm{\sigmaboldN}_{0,\partial\Omega} \Norm{\ufrak}_{0,\partial\Omega}.
\]
The continuous trace inequality \cite[Lemma~1.49]{DiPietro-Ern:2011}
(constant $c_{cti}$),
the Poincar\'e inequality \cite[Lemma 3.24]{Ern-Guermond:2021}
(constant~$c_P$),
and Korn's inequality~\eqref{app:Korn}
(constant~$\alpha_0$)
give
\[
\Norm{\nablaS \ufrak}_{0,\Omega}
\le c_{cti} \ c_P \ \alpha_0 
    \Norm{\sigmaboldN}_{0,\partial\Omega}.
\]
The three constants in the display above are explicit
with respect to the ratio between~$\hOmega$ and~$\rhoOmega$ in~\eqref{app:geometry}.
The bound on $\SemiNorm{\ufrak}_{1,\Omega}$
in~\eqref{app:stab-Stokes-1} is proven.
We are left with the bound on $\Norm{\pfrak}_{0,\Omega}$,
which indeed follows from \cite[Theorem 4.2.3]{Boffi-Brezzi-Fortin:2013},
and the fact that $\alpha_0$, $\Cafrak$, and $\beta_0$
are explicit with respect to the ratio between~$\hOmega$ and~$\rhoOmega$ in~\eqref{app:geometry}.
\end{proof}

We are now in a position to define a stable
lifting~$\sigmaboldhat$ of~$\sigmaboldN$.
Given $(\ufrak,\pfrak)$ the solution to~\eqref{app:Stokes-weak},
we introduce the symmetric tensor
\begin{equation} \label{app:sigmaboldbar}
\sigmaboldhat := \nablaS \ufrak - \pfrak \ \Ibb.
\end{equation}
The triangle inequality
and Proposition~\ref{proposition:stability-Stokes} yield
\begin{equation} \label{app:stability-lifting-bcs}
\Norm{\sigmaboldhat}_{0,\Omega}
\le \widetilde c_L \Norm{\sigmaboldN}_{0,\partial\Omega},
\end{equation}
where~$\widetilde c_L$ is the constant in~\eqref{app:stab-Stokes-2}.
Moreover, problem~\eqref{app:Stokes-strong} implies
that~$\sigmaboldhat$ is divergence free.

%%%%
\paragraph*{A weak formulation of~\eqref{app:strong-formulation}.}
%%%%

Let~$\Sigmabold$, $\Vbf$, and~$a(\cdot,\cdot)$ be as in~\eqref{spaces&bfs}.
Given~$\sigmaboldhat$ as in~\eqref{app:sigmaboldbar},
an equivalent weak formulation of~\eqref{app:strong-formulation}
reads as follows:
\begin{equation} \label{app:weak-formulation}
\begin{cases}
\text{Find } \sigmabold_0 \in \Sigmabold 
        \text{ and  } \ubf\in \Vbf \text{ such that}\\
a(\sigmabold_0,\taubold) + (\divbf \taubold, \ubf)_{0,\Omega} 
    =  - a(\sigmaboldhat,\taubold)  
        & \forall \taubold \in \Sigmabold\\
(\divbf \sigmabold_0, \vbf)_{0,\Omega} 
    = - (\fbf,\vbf)_{0,\Omega}        & \forall \vbf \in \Vbf.
\end{cases}
\end{equation}
The solution~$\sigmabold$ to~\eqref{app:strong-formulation}
is given by the sum of $\sigmabold_0$ and $\sigmaboldhat$.
Below, under assumption~\eqref{app:extra-regularity-essential-condition},
we shall discuss the well-posedness of~\eqref{app:weak-formulation},
and derive a priori estimates for~$\sigmabold$,
which are explicit with respect to the ratio
between~$\hOmega$ and~$\rhoOmega$ in~\eqref{app:geometry}.

The well-posedness for the case of natural and mixed natural-essential
boundary conditions is discussed in \cite[Chapter~8]{Boffi-Brezzi-Fortin:2013},
yet without an explicit knowledge of the constants in the a priori estimates.

%%%%
\paragraph*{A technical result.}
%%%%
We prove a result, which is instrumental to infer the
coercivity on the kernel of the bilinear form $a(\cdot,\cdot)$
employed in~\eqref{app:weak-formulation}.

\begin{lemma}\label{lem:devdiv}
There exists a positive constant $C_d$ only depending
on the ratio between~$\hOmega$ and~$\rhoOmega$ in~\eqref{app:geometry} such that
\begin{equation} \label{app:devdiv}
\Norm{\taubold}_{0,\Omega}\le C_d
\left( \Norm{\dev\taubold}_{0,\Omega} 
        + \hOmega\Norm{\divbf\taubold}_{0,\Omega} \right)
\qquad\qquad \forall \taubold \in \Sigmabold.
\end{equation}
\end{lemma}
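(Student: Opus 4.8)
The plan is to reduce \eqref{app:devdiv} to a bound on the trace $q:=\tr\taubold$ and then to exploit the boundary condition built into the definition of $\Sigmabold$ in \eqref{spaces&bfs} together with the Stokes inf--sup condition of Lemma~\ref{lemma:inf-sup-Stokes}. First I would use the pointwise orthogonal splitting $\taubold=\dev\taubold+\tfrac13(\tr\taubold)\,\Ibb$. Since $\dev\taubold$ is trace free it is $\Lbb^2(\Omega)$-orthogonal to the spherical part, and because $|\Ibb|^2=3$ one gets
\[
\Norm{\taubold}_{0,\Omega}^2=\Norm{\dev\taubold}_{0,\Omega}^2+\tfrac13\Norm{\tr\taubold}_{0,\Omega}^2 .
\]
Hence it suffices to control $\Norm{q}_{0,\Omega}$ by $\Norm{\dev\taubold}_{0,\Omega}$ and $\hOmega\Norm{\divbf\taubold}_{0,\Omega}$.

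To this end I would invoke the surjectivity encoded in Lemma~\ref{lemma:inf-sup-Stokes}: since there $\Qfrak=L^2(\Omega)$ is the full space, for the scalar field $q\in L^2(\Omega)$ there exists $\vfrak\in\Vfrak=\Htildebfo(\Omega)$ with $\div\vfrak=q$ and $\SemiNorm{\vfrak}_{1,\Omega}\le\beta_0^{-1}\Norm{q}_{0,\Omega}$, the constant $\beta_0$ being explicit in $\hOmega/\rhoOmega$. Starting from $\tfrac13\Norm{q}_{0,\Omega}^2=\tfrac13(q,\div\vfrak)_{0,\Omega}$, I would use the algebraic identity $\taubold:\nabla\vfrak=\dev\taubold:\nabla\vfrak+\tfrac13\,q\,\div\vfrak$ (a consequence of the splitting above and of $\div\vfrak=\Ibb:\nabla\vfrak$) to rewrite
\[
\tfrac13\Norm{q}_{0,\Omega}^2=(\taubold,\nabla\vfrak)_{0,\Omega}-(\dev\taubold,\nabla\vfrak)_{0,\Omega}.
\]
The decisive step is the integration by parts $(\taubold,\nabla\vfrak)_{0,\Omega}=-(\divbf\taubold,\vfrak)_{0,\Omega}+\langle\taubold\nbf,\vfrak\rangle$: because $\taubold$ lies in $\Sigmabold$, the pairing $\langle\taubold\nbf,\vfrak\rangle$ vanishes for every $\vfrak\in\Hbf^1(\Omega)$, and in particular for the one produced by the inf--sup argument. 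This is precisely what lets me use the full space $\Htildebfo(\Omega)$ (rather than a zero-trace lifting), so that no separate treatment of the mean value of $q$ is required.

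It then remains routine. The Cauchy--Schwarz inequality gives $\tfrac13\Norm{q}_{0,\Omega}^2\le\Norm{\divbf\taubold}_{0,\Omega}\Norm{\vfrak}_{0,\Omega}+\Norm{\dev\taubold}_{0,\Omega}\SemiNorm{\vfrak}_{1,\Omega}$; the Poincar\'e--Wirtinger inequality on $\Omega$ (valid since $\int_\Omega\vfrak=\zerobf$, with constant scaling like $\hOmega$ and explicit in $\hOmega/\rhoOmega$) yields $\Norm{\vfrak}_{0,\Omega}\lesssim\hOmega\SemiNorm{\vfrak}_{1,\Omega}$, and the inf--sup bound $\SemiNorm{\vfrak}_{1,\Omega}\le\beta_0^{-1}\Norm{q}_{0,\Omega}$ then cancels one power of $\Norm{q}_{0,\Omega}$, producing $\Norm{q}_{0,\Omega}\lesssim\Norm{\dev\taubold}_{0,\Omega}+\hOmega\Norm{\divbf\taubold}_{0,\Omega}$. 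Inserting this into the first display gives \eqref{app:devdiv} with $C_d$ depending only on $\beta_0$ and the Poincar\'e constant, hence only on $\hOmega/\rhoOmega$. The main obstacle I anticipate is purely bookkeeping: making sure that every constant used carries an explicit dependence on $\hOmega/\rhoOmega$ and that the powers of $\hOmega$ land exactly as in \eqref{app:devdiv}. Conceptually the only real idea is that the boundary condition defining $\Sigmabold$ is what rules out the counterexample $\taubold=\Ibb$ (for which $\dev\taubold=\zerobf$ and $\divbf\taubold=\zerobf$), and thereby makes the estimate possible.
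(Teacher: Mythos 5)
Your proposal is correct and follows essentially the same route as the paper's proof: both reduce the estimate to bounding $\Norm{\tr\taubold}_{0,\Omega}$, construct a zero-average field $\vfrak$ with $\div\vfrak=\tr\taubold$ and $\SemiNorm{\vfrak}_{1,\Omega}\lesssim\Norm{\tr\taubold}_{0,\Omega}$, integrate by parts so that the essential condition in the definition of $\Sigmabold$ kills the boundary pairing, and conclude via Cauchy--Schwarz, Poincar\'e, and cancellation of one power of $\Norm{\tr\taubold}_{0,\Omega}$. The only cosmetic difference is that you obtain the right inverse of the divergence from Lemma~\ref{lemma:inf-sup-Stokes} (which is itself proved via the Guzm\'an--Salgado result), whereas the paper invokes that result directly on $\Hbf^1(\Omega)\cap\Lbf^2_0(\Omega)$.
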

\begin{proof}
Using \cite[Corollary 19, point 3]{Guzman-Salgado:2021},
there exists a positive constant~$c_{GS}$
only depending on~$\hOmega$ and~$\rhoOmega$ such that
for any~$\taubold$ in~$\Sigmabold$,
there exists~$\vbf$ in~$\Hbf^1(\Omega)\cap \Lbf^2_0(\Omega)$ satisfying
\begin{equation}\label{proof:devdiv1}
\div\vbf=\tr(\taubold) 
\;\text{ and }\;
\SemiNorm{\vbf}_{1,\Omega} 
\le c_{GS} \Norm{\tr(\taubold)}_{0,\Omega}.
\end{equation}
The definition of the $\dev$ operator allows us to write
\[
\int_\Omega \tr(\taubold)^2 = \int_\Omega \tr(\taubold)\div\vbf
= \int_\Omega \taubold:\tr(\gradbf\vbf)\Ibb
= 3 \int_\Omega \taubold:(\gradbf\vbf-\dev(\gradbf\vbf)).
\]
Using the symmetry of~$\taubold$, an integration by parts,
the above identity, the definition of~$\Sigmabold$ in~\eqref{spaces&bfs},
the Cauchy--Schwarz inequality,
the Poincar\'e inequality \cite[Section~3.3]{Ern-Guermond:2021} (constant $c_P$
independent of the ratio between~$\hOmega$ and~$\rhoOmega$ in~\eqref{app:geometry}),
and~\eqref{proof:devdiv1} (constant $c_{GS}$), we get
\[
\begin{aligned}
\Norm{\tr(\taubold)}_{0,\Omega}^2
&= - 3 \int_\Omega \dev\taubold : \nablaS \vbf
    - 3 \int_\Omega (\divbf\taubold)\cdot\vbf   \\
& \le 3 \left(\Norm{\dev \taubold}_{0,\Omega} \Norm{\nablaS \vbf}_{0,\Omega} 
    + \Norm{\divbf\taubold}_{0,\Omega} \Norm{\vbf}_{0,\Omega} \right) \\
&\le (1+c_P) \left(\Norm{\dev\taubold}_{0,\Omega} 
    + \hOmega \Norm{\divbf\taubold}_{0,\Omega}  \right) \SemiNorm{\vbf}_{1,\Omega} \\
& \le (1+c_P)c_{GS} \left(\Norm{\dev\taubold}_{0,\Omega} 
    + \hOmega\Norm{\divbf\taubold}_{0,\Omega} \right)
    \Norm{\tr(\taubold)}_{0,\Omega}.
\end{aligned}
\]
We deduce
\[
\Norm{\tr(\taubold)}_{0,\Omega}
\le (1+c_P)c_{GS} \left( \Norm{\dev\taubold}_{0,\Omega} 
            + \hOmega\Norm{\divbf\taubold}_{0,\Omega} \right) .
\]
The assertion follows using again the definition of the $\dev$ operator:
\[
\Norm{\taubold}_{0,\Omega} \le
\Norm{\dev\taubold}_{0,\Omega} + \Norm{\tr(\taubold)}_{0,\Omega} 
\le 2(1+c_P)c_{GS} \left( \Norm{\dev\taubold}_{0,\Omega}
            + \hOmega\Norm{\divbf\taubold}_{0,\Omega} \right) .
\]
\end{proof}

%%%%
\paragraph*{An inf-sup condition and coercivity on the kernel for the Hellinger--Reissner principle.}
%%%%
We recall an inf-sup condition for the Hellinger--Reissner principle,
whose proof can be found in \cite[Proposition 1.5]{Botti-Mascotto:2025}.

\begin{proposition} \label{proposition:inf-sup-HR}
There exists a positive constant $\betastarz$ only depending on the ratio
of~$\hOmega$ and~$\rhoOmega$ such that
\begin{equation} \label{app:infsup}
\inf_{\vbf\in\Vbf}\ \sup_{\taubold\in\Sigmabold} 
            \frac{(\divbf \taubold, \vbf)_{0,\Omega}}
            {\Norm{\taubold}_{\Sigmabold} \Norm{\vbf}_{\Vbf}}
\ge \hOmega^{-1} \betastarz.
\end{equation}    
\end{proposition}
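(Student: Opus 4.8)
The plan is to establish the inf-sup condition by the standard Fortin-type strategy: for each $\vbf \in \Vbf$, I would exhibit a single competitor $\taubold \in \Sigmabold$ for which the numerator in~\eqref{app:infsup} equals $\Norm{\vbf}_{\Vbf}^2$, while the denominator is controlled by $\hOmega \Norm{\vbf}_{\Vbf}$ up to a constant depending only on the ratio between $\hOmega$ and $\rhoOmega$. Since neither the spaces nor the norms in~\eqref{app:infsup} involve the elasticity tensor, the construction should be purely kinematic (i.e.\ $\Cbb$-free), so that the resulting $\betastarz$ carries no dependence on the Lam\'e parameters.

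Concretely, given $\vbf \in \Vbf$, I would solve the auxiliary pure-traction problem for the operator $\divbf \nablaS$: find $\wbf \in \Htildebfo(\Omega)$ such that $(\nablaS \wbf, \nablaS \vfrak)_{0,\Omega} = -(\vbf, \vfrak)_{0,\Omega}$ for all $\vfrak \in \Htildebfo(\Omega)$. Well-posedness follows from Lax--Milgram, the coercivity of $(\nablaS \cdot, \nablaS \cdot)_{0,\Omega}$ on $\Htildebfo(\Omega)$ being exactly Korn's inequality in Lemma~\ref{lemma:coercivity-kernel-Stokes} combined with a Poincar\'e inequality; the two average conditions defining $\Htildebfo(\Omega)$ pin down the rigid-body representative uniquely. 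Crucially, since $\vbf$ is orthogonal to $\RM(\Omega)$ and $\nablaS$ annihilates rigid body motions, the identity above in fact holds for every $\vfrak \in \Hbf^1(\Omega)$, not only for $\vfrak \in \Htildebfo(\Omega)$.

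I would then set $\taubold := \nablaS \wbf$, which is symmetric by construction. Testing against compactly supported smooth vector fields shows $\divbf \taubold = \vbf$ in $\Lbf^2(\Omega)$, so $\taubold \in \HScal(\divbf,\Omega)$; Green's formula $\langle \taubold \nbf, \vfrak \rangle = (\divbf \taubold, \vfrak)_{0,\Omega} + (\taubold, \nabla \vfrak)_{0,\Omega}$, together with the symmetry of $\taubold$ and the extended identity, then yields $\langle \taubold \nbf, \vfrak \rangle = (\vbf,\vfrak)_{0,\Omega} + (\nablaS \wbf, \nablaS \vfrak)_{0,\Omega} = 0$ for all $\vfrak \in \Hbf^1(\Omega)$, so that $\taubold \in \Sigmabold$. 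The numerator is thus $(\divbf \taubold, \vbf)_{0,\Omega} = \Norm{\vbf}_{0,\Omega}^2 = \Norm{\vbf}_{\Vbf}^2$.

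For the denominator, since $\divbf \taubold = \vbf$ one has $\Norm{\taubold}_{\Sigmabold}^2 = \Norm{\nablaS \wbf}_{0,\Omega}^2 + \hOmega^2 \Norm{\vbf}_{0,\Omega}^2$, so it remains to bound $\Norm{\nablaS \wbf}_{0,\Omega}$. Taking $\vfrak = \wbf$ gives $\Norm{\nablaS \wbf}_{0,\Omega}^2 = -(\vbf, \wbf)_{0,\Omega} \le \Norm{\vbf}_{0,\Omega} \Norm{\wbf}_{0,\Omega}$; chaining the Poincar\'e inequality (zero average of $\wbf$) with Korn's inequality (Lemma~\ref{lemma:coercivity-kernel-Stokes}) bounds $\Norm{\wbf}_{0,\Omega} \lesssim \hOmega \Norm{\nablaS \wbf}_{0,\Omega}$, whence $\Norm{\nablaS \wbf}_{0,\Omega} \lesssim \hOmega \Norm{\vbf}_{0,\Omega}$ and $\Norm{\taubold}_{\Sigmabold} \lesssim \hOmega \Norm{\vbf}_{\Vbf}$. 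Substituting into~\eqref{app:infsup} yields the claim, with $\betastarz$ determined by the Poincar\'e and Korn constants alone. The main obstacle is precisely tracking the explicit dependence of these constants on $\hOmega/\rhoOmega$ --- this is where Lemma~\ref{lemma:coercivity-kernel-Stokes} does the heavy lifting --- and verifying the weak boundary condition $\taubold \nbf = \zerobf$ carefully, so that $\taubold$ genuinely lands in $\Sigmabold$.
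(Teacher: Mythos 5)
Your proposal is correct, but it takes a genuinely different route from the paper in a specific sense: the paper does not prove Proposition~\ref{proposition:inf-sup-HR} internally at all --- its ``proof'' is a citation of \cite[Proposition 1.5]{Botti-Mascotto:2025}. Your argument supplies a self-contained construction instead. The competitor $\taubold = \nablaS\wbf$, with $\wbf$ solving the auxiliary pure-traction problem in $\Htildebfo(\Omega)$, is well defined by Lax--Milgram since Korn's inequality (Lemma~\ref{lemma:coercivity-kernel-Stokes}, stated exactly on $\Vfrak=\Htildebfo(\Omega)$, where your $\wbf$ lives) together with the Poincar\'e inequality gives coercivity of $(\nablaS\cdot,\nablaS\cdot)_{0,\Omega}$ there. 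The pivotal observation --- that the orthogonality $\vbf\perp\RM(\Omega)$ lets the variational identity extend from $\Htildebfo(\Omega)$ to all of $\Hbf^1(\Omega)$ --- is handled correctly, and it is indeed what simultaneously yields $\divbf\taubold=\vbf$ (testing with compactly supported fields) and the weak traction-free condition $\langle\taubold\,\nbf,\vfrak\rangle=0$ (testing with general $\Hbf^1$ fields via Green's formula), so $\taubold$ genuinely lies in $\Sigmabold$. The energy estimate $\Norm{\nablaS\wbf}_{0,\Omega}\le c\,\hOmega\Norm{\vbf}_{0,\Omega}$ then gives $\Norm{\taubold}_{\Sigmabold}\lesssim \hOmega\Norm{\vbf}_{\Vbf}$, which produces exactly the stated scaling $\hOmega^{-1}\betastarz$. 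The only point to make airtight in the constant tracking is that the Poincar\'e constant for zero-average fields is itself explicit in $\hOmega/\rhoOmega$; this holds for domains star-shaped with respect to a ball and is the same kind of dependence the paper invokes elsewhere in the appendix, so it is admissible. What your approach buys is an appendix that stands on its own: every ingredient (Korn, Poincar\'e, Lax--Milgram) is already present in the paper, whereas the paper's one-line proof leaves the reader dependent on the external reference, whose argument is plausibly of the same Fortin type as yours.
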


We define the Hellinger--Reissner kernel as
\begin{equation} \label{definition-Kbb}
\mathbb K := \{ \taubold \in \Sigmabold \mid (\divbf \taubold, \vbf)_{0,\Omega}=0 \quad\forall\vbf\in \Vbf\}.
\end{equation}
\begin{proposition} \label{proposition:coercivity-HR}
The coercivity of $a(\cdot,\cdot)$ on the kernel~$\mathbb K$
\begin{equation} \label{app:coercivity-HR}
\alpha_0^* \Norm{\taubold}_{\Sigmabold}^2
\le a(\taubold,\taubold)
\qquad\qquad \forall \taubold\in\mathbb K
\end{equation}
holds true with coercivity constant~$a_0^*$
independent of the ratio between~$\hOmega$ and~$\rhoOmega$ in~\eqref{app:geometry},
and the Lam\'e parameter~$\lambda$.
\end{proposition}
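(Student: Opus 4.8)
The plan is to combine a sharp characterization of the kernel $\mathbb K$ in~\eqref{definition-Kbb} with the explicit block structure of the compliance tensor $\Dbb$ in~\eqref{Lame}. The pivotal preliminary observation is that every $\taubold$ in $\mathbb K$ is divergence free. Indeed, the kernel condition $(\divbf\taubold,\vbf)_{0,\Omega}=0$ for all $\vbf\in\Vbf$, together with the description of $\Vbf$ in~\eqref{spaces&bfs} as the $L^2$-orthogonal complement of $\RM(\Omega)$, forces $\divbf\taubold\in\RM(\Omega)$. On the other hand, membership $\taubold\in\Sigmabold$ encodes the homogeneous traction condition $\langle\taubold\,\nbf,\vbf\rangle=0$ for all $\vbf\in\Hbf^1(\Omega)$; testing the $H(\divbf)$ integration-by-parts identity $\langle\taubold\,\nbf,\rbf\rangle=(\taubold,\nabla\rbf)_{0,\Omega}+(\divbf\taubold,\rbf)_{0,\Omega}$ against an arbitrary rigid body motion $\rbf\in\RM(\Omega)\subset\Hbf^1(\Omega)$, and using the symmetry of $\taubold$ together with the fact that $\nablaS\rbf$ is the zero tensor, I obtain $(\divbf\taubold,\rbf)_{0,\Omega}=0$, i.e.\ $\divbf\taubold$ is $L^2$-orthogonal to $\RM(\Omega)$. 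Since $\divbf\taubold$ then lies in $\RM(\Omega)\cap\RM(\Omega)^\perp$, I conclude $\divbf\taubold=\zerobf$ on $\mathbb K$.

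Once divergence-freeness is in hand, the graph norm collapses to $\Norm{\taubold}_{\Sigmabold}^2=\Norm{\taubold}_{0,\Omega}^2$, and it remains to bound $\Norm{\taubold}_{0,\Omega}^2$ from above by $a(\taubold,\taubold)$. Here I would use the explicit form of $\Dbb$ in~\eqref{Lame}. Since $\dev\taubold:\Ibb=0$ and $\Ibb:\taubold=\tr(\taubold)$, the quadratic form decouples into a deviatoric and a trace contribution,
\[
a(\taubold,\taubold)
= \frac{1}{2\mu}\Norm{\dev\taubold}_{0,\Omega}^2
+ \frac{1}{3(2\mu+3\lambda)}\Norm{\tr(\taubold)}_{0,\Omega}^2
\ge \frac{1}{2\mu}\Norm{\dev\taubold}_{0,\Omega}^2 .
\]
Discarding the nonnegative trace term is precisely the step that delivers robustness in $\lambda$: its coefficient $1/(3(2\mu+3\lambda))$ degenerates as $\lambda\to\infty$, so it cannot feed a $\lambda$-uniform lower bound and must be dropped.

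The final ingredient is Lemma~\ref{lem:devdiv}. Applied to $\taubold\in\mathbb K$, the vanishing of $\divbf\taubold$ reduces~\eqref{app:devdiv} to $\Norm{\taubold}_{0,\Omega}\le C_d\Norm{\dev\taubold}_{0,\Omega}$. Squaring and inserting into the previous display yields
\[
a(\taubold,\taubold)
\ge \frac{1}{2\mu\,C_d^2}\,\Norm{\taubold}_{0,\Omega}^2
= \frac{1}{2\mu\,C_d^2}\,\Norm{\taubold}_{\Sigmabold}^2 ,
\]
so one may take $\alpha_0^*=1/(2\mu\,C_d^2)$. This quantity is manifestly independent of $\lambda$, the only geometric information being the one carried by $C_d$ in Lemma~\ref{lem:devdiv}.

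I expect the kernel characterization of the first paragraph to be the main obstacle, in the sense that it carries the whole argument: recovering control of the trace part of $\taubold$ from its deviatoric part is impossible $\lambda$-uniformly for a generic symmetric tensor, and becomes available only because $\divbf\taubold=\zerobf$ lets Lemma~\ref{lem:devdiv} absorb $\Norm{\tr(\taubold)}_{0,\Omega}$. The delicate technical point is that the traction condition defining $\Sigmabold$ holds only in the $\Hbf^{-\frac12}(\partial\Omega)$ duality sense; this is exactly why I would test the integration-by-parts identity against the finite-dimensional, smooth space $\RM(\Omega)$, where the pairing is unambiguous and $\nablaS\rbf$ vanishes identically.
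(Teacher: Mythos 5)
Your proof is correct and follows essentially the same route as the paper: drop the $\lambda$-weighted trace part of $a(\cdot,\cdot)$ using the explicit form of $\Dbb$, then apply Lemma~\ref{lem:devdiv} to kernel tensors, which are divergence free so that $\Norm{\taubold}_{\Sigmabold}$ collapses to $\Norm{\taubold}_{0,\Omega}$. The only differences are that the paper leaves the divergence-free characterization of $\mathbb K$ implicit (citing \eqref{definition-Kbb} over the inequality signs, where you spell out the rigid-body-motion orthogonality argument) and states the constant as $(2\mu C_d)^{-1}$, whereas your $\alpha_0^*=1/(2\mu C_d^2)$ correctly accounts for squaring \eqref{app:devdiv}.
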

\begin{proof}
We have
\begin{equation}\label{app:coer}
a(\taubold,\taubold)
\ge (2\mu)^{-1} \Norm{\dev \taubold}_{0,\Omega}^2
\overset{\eqref{app:devdiv}, \eqref{definition-Kbb}}{\ge}
    (2\mu C_d)^{-1} \Norm{\taubold}_{0,\Omega}^2
\overset{\eqref{definition-Kbb}}{=} 
    (2\mu C_d)^{-1} \Norm{\taubold}_{\Sigmabold}^2
\qquad \forall \taubold\in\mathbb K.
\end{equation}
\end{proof}
%%

%%%%
\paragraph*{Well posedness of~\eqref{app:weak-formulation}.}
%%%%
We are in a position to prove the main result of the appendix.

%%%%%
\begin{theorem} \label{theorem:stability-HR}
For every~$\fbf$ in~$\Lbf^2(\Omega)$, $\sigmaboldN$ in~$\Lbf^2(\partial\Omega)$,
and~$\sigmaboldhat$ as in~\eqref{app:sigmaboldbar},
problem~\eqref{app:weak-formulation} admits a unique solution
$(\sigmabold,\ubf)$ in $\Sigmabold\times\Vbf$.
Moreover, there exists a positive constant~$c_{HR}$ depending 
on the ratio between~$\hOmega$ and~$\rhoOmega$ 
in~\eqref{app:geometry}, and~$\mu$,
but independent of~$\lambda$ such that
the solution to problem~\eqref{app:weak-formulation}
satisfies the stability bound
\[
\Norm{\sigmabold}_{\Sigmabold}
+ \Norm{\ubf}_{\Vbf}
\le c_{HR} (\Norm{\fbf}_{0,\Omega} 
    + \Norm{\sigmaboldN}_{0,\partial\Omega}).
\]
If assumption~\eqref{app:extra-regularity-essential-condition} is not valid,
well-posedness can be still proved
but without an explicit dependence of the stability constant
in terms of the geometric properties of~$\Omega$.
\end{theorem}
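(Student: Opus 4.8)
The plan is to read \eqref{app:weak-formulation} as a saddle-point problem and invoke the Brezzi theory. Setting $b(\taubold,\vbf) := (\divbf \taubold, \vbf)_{0,\Omega}$, I regard the right-hand sides as the two continuous functionals $\taubold \mapsto -a(\sigmaboldhat,\taubold)$ on $\Sigmabold$ and $\vbf \mapsto -(\fbf,\vbf)_{0,\Omega}$ on $\Vbf$. The two structural hypotheses are already in hand: the coercivity of $a(\cdot,\cdot)$ on the kernel $\mathbb K$ is Proposition~\ref{proposition:coercivity-HR}, with constant $\alpha_0^*$ independent of $\lambda$ and of $\hOmega/\rhoOmega$; the inf-sup condition for $b(\cdot,\cdot)$ is Proposition~\ref{proposition:inf-sup-HR}, with constant $\hOmega^{-1}\betastarz$ explicit in the geometry.

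First I would record the two continuity bounds not yet stated. For $a(\cdot,\cdot)$, the explicit form of $\Dbb$ in~\eqref{Lame} shows that its largest eigenvalue is $(2\mu)^{-1}$, since the trace block carries the smaller factor $1/(3(2\mu+3\lambda))$; hence $|a(\sigmabold,\taubold)| \le (2\mu)^{-1}\Norm{\sigmabold}_{0,\Omega}\Norm{\taubold}_{0,\Omega} \le (2\mu)^{-1}\Norm{\sigmabold}_{\Sigmabold}\Norm{\taubold}_{\Sigmabold}$, a constant that depends on $\mu$ but not on $\lambda$. For $b(\cdot,\cdot)$, the definition of $\Norm{\cdot}_{\Sigmabold}$ gives $\Norm{\divbf\taubold}_{0,\Omega}\le \hOmega^{-1}\Norm{\taubold}_{\Sigmabold}$, whence $|b(\taubold,\vbf)|\le \hOmega^{-1}\Norm{\taubold}_{\Sigmabold}\Norm{\vbf}_{\Vbf}$. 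With these four ingredients, \cite[Theorem~4.2.3]{Boffi-Brezzi-Fortin:2013} yields existence, uniqueness, and an a priori bound for $(\sigmabold_0,\ubf)$ in terms of the dual norms of the two right-hand sides, with a constant assembled from $\alpha_0^*$, $\hOmega^{-1}\betastarz$, and the two continuity constants.

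It then remains to estimate the data. The displacement functional is bounded directly by $\Norm{\fbf}_{0,\Omega}$. For the stress functional I would exploit that the Stokes lifting $\sigmaboldhat$ in~\eqref{app:sigmaboldbar} is divergence free, so $\Norm{\sigmaboldhat}_{\Sigmabold}=\Norm{\sigmaboldhat}_{0,\Omega}$, and combine the continuity of $a$ with~\eqref{app:stability-lifting-bcs} to get $\sup_{\taubold}|a(\sigmaboldhat,\taubold)|/\Norm{\taubold}_{\Sigmabold}\le (2\mu)^{-1}\widetilde c_L\Norm{\sigmaboldN}_{0,\partial\Omega}$. Recalling $\sigmabold=\sigmabold_0+\sigmaboldhat$, the triangle inequality together with a further use of~\eqref{app:stability-lifting-bcs} promotes the bound on $\sigmabold_0$ to one on $\sigmabold$, producing the claimed estimate with $c_{HR}$ depending only on $\hOmega/\rhoOmega$ and $\mu$.

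The main obstacle is bookkeeping rather than analysis: I must check that the constant emerging from the Brezzi estimate stays bounded as $\lambda\to\infty$. This hinges on the two facts highlighted above, namely that the coercivity constant $\alpha_0^*$ of Proposition~\ref{proposition:coercivity-HR} is $\lambda$-independent (it arises from the deviatoric control in~\eqref{app:coer} via Lemma~\ref{lem:devdiv}) and that the continuity constant of $a$ does not degenerate but in fact improves as $\lambda$ grows. Since $\betastarz$ and $\widetilde c_L$ are already explicit in $\hOmega/\rhoOmega$, tracking the algebraic combination of these constants delivers the asserted $\lambda$-robust, geometry-explicit $c_{HR}$. Finally, the closing sentence of the statement, where $\sigmaboldN$ only belongs to $\Hbf^{-1/2}(\partial\Omega)$, follows the same template but uses~\eqref{app:stab-Stokes-1} in place of~\eqref{app:stab-Stokes-2}, at the price of the non-explicit constant $c_L$.
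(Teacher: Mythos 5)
Your proposal is correct and follows essentially the same route as the paper's proof: Brezzi saddle-point theory (citing \cite[Theorem~4.2.3]{Boffi-Brezzi-Fortin:2013}, where the paper cites eqs.\ (4.2.36)--(4.2.37) of the same reference) combined with the inf-sup condition of Proposition~\ref{proposition:inf-sup-HR}, the kernel coercivity of Proposition~\ref{proposition:coercivity-HR}, and the lifting estimate~\eqref{app:stability-lifting-bcs} to handle the $\sigmaboldhat$ term. Your version merely fills in details the paper leaves implicit (the explicit continuity constants $(2\mu)^{-1}$ and $\hOmega^{-1}$, the dual-norm bounds on the data, and the triangle-inequality step recovering $\sigmabold=\sigmabold_0+\sigmaboldhat$), all of which are consistent with the paper's argument.
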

\begin{proof}
Since the right-hand sides in \eqref{app:weak-formulation} define
linear functionals on $\Sigmabold$ and~$\Vbf$, 
existence and uniqueness of the solution to~\eqref{app:weak-formulation} 
follow from the standard inf-sup theory, \eqref{app:infsup}, and~\eqref{app:coer}.

From \cite[eqs. (4.2.36) and (4.2.37)]{Boffi-Brezzi-Fortin:2013},
the constant in a priori estimates only depend on
the continuity constants of the two bilinear forms
and the right-hand side,
the inf-sup constant~$\betastarz$ in~\eqref{app:infsup},
and the coercivity constant~$\alpha_0^*$ in~\eqref{app:coercivity-HR}.
All such constants are independent of~$\hOmega$ and~$\rhoOmega$;
see Propositions~\ref{proposition:inf-sup-HR}
and~\ref{proposition:coercivity-HR}.
On the other hand, the right-hand side involves the lifting~$\sigmaboldhat$
of the essential boundary condition~$\sigmaboldN$.
Under assumption~\eqref{app:extra-regularity-essential-condition},
we can further use~\eqref{app:stability-lifting-bcs}
and deduce the assertion.
\end{proof}

\end{document}